\newtheorem{thm}{Theorem}[section]
\newtheorem{crl}[thm]{Corollary}
\newtheorem{prp}[thm]{Proposition}
\newtheorem{lmm}[thm]{Lemma}
\newtheorem{conj}[thm]{Conjecture}
\newtheorem{rmk}[thm]{Remark}
\newcommand {\mb}{\mathbb}
\newcommand {\Z}{\mb Z}
\newcommand {\R}{\mb R}
\newcommand {\colim}{\textrm{colim}\ }
\newcommand {\ex}{\mathrm{excess}}
\begin{document}

\title{Towards a Browder theorem for spherical classes in $\Omega^lS^{l+n}$}

\author{Hadi Zare\\
        School of Mathematics, Statistics,
        and Computer Sciences\\
        College of Science, University of Tehran, Tehran, Iran\\
        \textit{email:hadi.zare} at \textit{ut.ac.ir}}
\date{}

\maketitle

\begin{abstract}
According to Browder \cite{Browder} if $4n+2\neq 2^{t+1}-2$ then the Kervaire invariant of the cobordism class of a $(4n+2)$-dimensional manifold $M^{4n+2}$ vanishes and $M^{2^{t+1}-2}$ is of Kervaire invariant one if and only if $h_t^2\in\mathrm{Ext}_A^{2^t,1}(\Z/2,\Z/2)$ is a permanent cycle. On the other hand, according to Madsen \cite[Theorem 7.3]{Madsenthesis}  if $4n+2\neq 2^t-2$ then $M^{4n+2}$ is cobordant to a sphere (hence of Kervaire invariant zero) and $M^{2^{t+1}-2}$ is not cobordant to a sphere (hence of Kervaire invariant one) if and only if certain element $p_{2^{t}-1}^2\in H_*QS^0$ is spherical. Moreover, it is known that $p_{2^t-1}^2$ is spherical if and only if $h_t^2$ is a permanent cycle in the Adams spectral sequence. Moreover, classes $p_{2n+1}^2\in H_*QS^0$ with $2n+1\neq 2^t-1$ are easily eliminated from being spherical. Hence, Browder's theorem admits a presentation and proof in terms of certain square classes being spherical in $H_*QS^0$ (see also \cite{AkhmetevEccles-Browder} for another geometric proof of Browder's theorem, as well as \cite[Lemmata 4.2 and 4.3]{Eccles-codimension}).\\

Previously, we have determined spherical classes in $H_*\Omega^lS^{n+l}$ for $0<l<4$ and $n>0$ \cite{Zare-Els-1}. In this note, we consider the problem of determining spherical classes $H_*\Omega^lS^{n+l}$ with $n>0$ and $4\leqslant l\leqslant +\infty$ which verifies Eccles conjecture. We start by observing that\\
(1) For $n>0$ and $1\leqslant l\leqslant +\infty$, if $\xi\in H_*\Omega^lS^{n+l}$ is spherical so that $\sigma_*^l\xi=0$ then $\xi=\zeta^2$ for some $A$-annihilated primitive class where $\zeta$ is odd dimensional; here, for $i>0$, $\sigma_*^i:H_*\Omega^lS^{n+l}\to H_{*+i}\Omega^{l-i}S^{n+l}$ is the (iterated) homology suspension. This reduces study of spherical classes to those which are square. We call this the reduction theorem as it reduces the problem to the study of spherical classes that are square. We the proceed to show that\\
(2) For $1\leqslant l<+\infty$, if $\xi^2\in H_*\Omega^lS^{n+l}$ is given with $\dim\xi+1\neq 2^t$ and $\dim \xi+1\equiv 2\textrm{ mod }4$ and $n>l$, then $\xi^2$ is not spherical. We call this as a generalised Browder theorem. Of course, this leaves out the cases with $\dim\xi+1\neq 2^t$ and $\dim\xi+1\equiv0\mathrm{mod}4$. We also present some partial results on the degenerate cases, corresponding to $\dim\xi\neq 2^t-1$, when $l>n$.\\
(3) For $l\in\{4,5,6,7,8\}$ the only spherical classes in $H_*\Omega^lS^{n+l}$ arise from the inclusion of the bottom cell, or the Hopf invariant one elements. Together with our previous work in \cite{Zare-Els-1}, this verifies Eccles conjecture when restricted to finite loop spaces
$\Omega^lS^{n+l}$ with $l<9$ and $n>0$.\\

As an application, we show that\\
(4) If $f\in{_2\pi_{2d}}QS^n$ is given with $h(f)\neq 0$, $\sigma_*h(f)=0$, and $d+1\equiv2\mathrm{mod}4$ then the dimension of the sphere of origin of $f$ is bounded below by $f$, that is if $f$ pulls back to an element of ${_2\pi_{2d}}\Omega^lS^{n+l}$ then $l>n$. This is the first type of this result that we know of in the existing literature providing a lower bounded on the dimension of sphere of origin of an element of ${_2\pi_*^s}$.\\

\textbf{AMS subject classification:$55Q45,55P42$}\\
\textbf{Keywords:} Loop space, James-Hopf map, Dyer-Lashof algebra, Steenrod algebra
\end{abstract}

\tableofcontents

\section{Introduction and statement of results}
For a pointed space $X$, let $QX=\colim\Omega^i\Sigma^iX$ be the infinite loop space associated to $\Sigma^\infty X$. Curtis conjecture on spherical classes in $H_*QS^0$ reads as following.

\begin{conj}(\cite[Theorem 7.1]{Curtis})\label{Curtisconj}
For $n>0$, only Hopf invariant one and Kervaire invariant one elements map nontrivially under the unstable Hurewicz homomorphism
$${_2\pi_n^s}\simeq{_2\pi_n}QS^0\to H_*QS^0.$$
\end{conj}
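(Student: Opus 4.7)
The plan is a two-stage approach mirroring items (1) and (2) of the abstract. The first stage reduces the problem to squares: for $\xi\in H_d QS^0$ spherical with $d>0$, I would establish that $\xi=\zeta^2$ for some odd-dimensional $A$-annihilated primitive $\zeta$. The reduction theorem (1), applied in the infinite loop space limit (together with a passage from $QS^n$ to $QS^0$), supplies exactly this, since the hypothesis $\sigma_*^l\xi=0$ is forced in the stable range on classes in the image of the unstable Hurewicz map. Combined with the Nishida relations on Dyer--Lashof operations and the primitivity/annihilation constraints, only squares $\zeta^2$ of the stated form survive as candidates.

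The second stage determines which squares are spherical. By Madsen's theorem quoted in the abstract, sphericity of $p_{2^t-1}^2$ is equivalent to $h_t^2\in\ext_A^{2,2^{t+1}}(\Z/2,\Z/2)$ being a permanent cycle, i.e.\ to the Kervaire invariant one problem; sphericity of $p_{2n+1}^2$ for $2n+1\neq 2^t-1$ is ruled out by the elementary arguments cited in the abstract; and the generalised Browder theorem (2) eliminates the remaining candidates with $\dim\zeta+1\equiv 2\bmod 4$ and $\dim\zeta+1\neq 2^t$. The surviving squares then have $\dim\zeta=2^t-1$, matching the Kervaire invariant one clause. For odd-dimensional spherical $\zeta$ that are not obtained from a square, James's splitting identifies $\zeta$ with a Hopf invariant one class, and Adams's theorem restricts $\dim\zeta$ to $\{1,3,7\}$, matching the other clause of the conjecture.

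The principal obstacle is extending the Browder elimination of (2) beyond its stated regime to the stable range with $\dim\xi+1\equiv 0\bmod 4$; this is exactly where the James--Hopf and Snaith-type destabilisation arguments behind (2) lose their force, and where a Dyer--Lashof square can interfere with the suspension obstruction. Closing this dimensional gap, and ensuring that no exotic spherical squares slip through when one passes from $\Omega^lS^{n+l}$ to $QS^0$, is the conjectural content that remains open. A self-contained proof would moreover need to incorporate the Hill--Hopkins--Ravenel theorem to interpret the ``Kervaire invariant one'' clause against the known finite list of dimensions. In this sense the plan does not close the conjecture outright but reduces it to a sharp, essentially arithmetical question about permanent cycles in the Adams spectral sequence.
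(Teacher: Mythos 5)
The statement you are asked to prove is Curtis's conjecture, which the paper records as an open conjecture (it is the motivating Conjecture, not a theorem) and nowhere proves; the paper's actual results only verify the related Eccles conjecture restricted to the finite loop spaces $\Omega^lS^{n+l}$ with $0<l<9$ and $n>0$. Your proposal is accordingly a plan rather than a proof, and you are right to say at the end that it does not close the conjecture. But even as a plan it overstates what the paper's machinery delivers for $QS^0$. Stage one leans on the Reduction Theorem, which is proved for $\Omega^lS^{n+l}$ with $n>0$ and for $Q\Sigma^2Z$; the space $QS^0$ is covered by neither hypothesis. The homology $H_*Q_0S^0$ is generated by classes $Q^I[1]*[-2^{l(I)}]$ rather than $Q^Ix_n$, the paper notes that even the case $n=1$ of the underlying decomposability lemma had a gap requiring a separate fix, and $n=0$ is not addressed anywhere in the paper. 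So the claim that a positive-dimensional spherical class in $H_*QS^0$ must be an odd-dimensional square is not supplied by the quoted results; it needs its own argument (one can adjoint down from $Q_0S^0$ only after translating components, and the suspension $H_*Q_0S^0\to H_*QS^1$ behaves differently on the $[-2^{l(I)}]$ factors).

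Stage two has the analogous problem: the generalised Browder theorem of the paper is stated for finite $l$ and, in the degenerate case, its conclusion is the inequality $l>n$ on the sphere of origin, not an elimination of classes in $H_*QS^0$ itself; moreover the residue class $\dim\zeta+1\equiv 0\bmod 4$ is left open even in the finite-loop setting, exactly as you note. Finally, the last clause of your plan is not right as stated: non-square spherical classes are not identified with Hopf invariant one classes via James's splitting of $\Omega\Sigma X$. In the paper's framework the Hopf invariant one elements appear because, after adjointing down to the last stage at which the Hurewicz image survives suspension, that image equals $x_j^2$ for the fundamental class, and the Boardman--Steer/Asadi--Eccles detection of the stable adjoint by $Sq^{j+1}$ in its mapping cone forces $j\in\{1,3,7\}$ by Adams. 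The honest summary is that the conjecture remains open, the paper proves none of it for $QS^0$, and your reduction of it to the permanent-cycle question presupposes steps (the $n=0$ reduction theorem and the $0\bmod 4$ eliminations) that are themselves unproved.
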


Throughout the paper, we shall work at the prime $2$, the homology will be $\Z/2$-homology; we write ${_2\pi_*^s}$ and ${_2\pi_*}$ for the $2$-component of $\pi_*^s$ and $\pi_*$ respectively. We also write $H_*$ for $H_*(-;\Z/2)$.\\

A more generalised question, is the determination of the image of the Hurewicz homomorphism $\pi_*\Omega^\infty E\to H_*\Omega^\infty E$ where $E$ is a suitable spectrum. The aim of this paper is to continue this investigation when $E=\Sigma^\infty S^n$ with $n>0$ by means of examining the conjecture on finite loops spaces associated to spheres. Let us recall a variant of Curtis conjecture, due to Eccles, which may be stated as follows.

\begin{conj}\label{Ecclesconj}
(\textrm{Eccles conjecture}) Let $X$ be a path connected $CW$-complex with finitely generated homology. For $n>0$, suppose $h(f)\neq 0$ where ${_2\pi_n^s}X\simeq{_2\pi_n}QX\to H_*QX$ is the unstable Hurewicz homomorphism. Then, the stable adjoint of $f$ either is detected by homology or is detected by a primary operation in its mapping cone.
\end{conj}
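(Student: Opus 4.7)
My plan is to exploit the tower $\Omega^l\Sigma^lX\to QX$ and combine the reduction theorem (1) with the generalised Browder theorem (2), extending how the paper verifies the conjecture for $l<9$. Since $X$ is path-connected with finitely generated homology, after sufficiently many suspensions $\Sigma^kX$ splits stably as a wedge of spheres, and by additivity of Hurewicz images this reduces the problem to the case $X=S^n$ with $n>0$.

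Given $f\in{_2\pi_d}QS^n$ with $h(f)\neq 0$, let $l$ be minimal such that $f$ lifts to $\widehat f\in{_2\pi_d}\Omega^lS^{n+l}$, and set $\xi=h(\widehat f)\in H_d\Omega^lS^{n+l}$. Apply the iterated homology suspension: if $\sigma_*^l\xi\neq 0$ then $h(f)=\sigma_*^l\xi$, so $f$ is detected by mod-$2$ homology, which is the first alternative of the conjecture. Otherwise choose the minimal $i\geqslant 1$ with $\sigma_*^i\xi=0$; the reduction theorem (1) applied to $\sigma_*^{i-1}\xi$, which is spherical in a finite loop space, gives $\sigma_*^{i-1}\xi=\zeta^2$ for an odd-dimensional $A$-annihilated primitive $\zeta$. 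The generalised Browder theorem (2) then eliminates $\zeta^2$ whenever $\dim\zeta+1\equiv 2\pmod 4$ and $\dim\zeta+1\neq 2^t$, after arranging the hypothesis that the sphere dimension exceeds the loop-space depth, possibly by further suspending the construction.

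The survivors split into two families. The first, with $\dim\zeta+1=2^t$, is exactly the Kervaire-invariant-one family $p_{2^t-1}^2$, which by Madsen's theorem recalled in the abstract corresponds to stable adjoints detected by $\mathrm{Sq}^{2^{t-1}}$ in their mapping cone, so the second alternative of the conjecture holds automatically. The second, with $\dim\zeta+1\equiv 0\pmod 4$ and $\dim\zeta+1\neq 2^t$, is the \textbf{main obstacle}: here the Browder argument via the Nishida relations is insufficient, and one would need a parallel analysis for this congruence class, presumably by combining refined Dyer-Lashof identities on $\zeta^2=Q^{|\zeta|}\zeta$ with secondary operations to either rule out sphericity or identify the corresponding stable adjoint as detected by a primary cohomology operation. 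A subsidiary difficulty is ensuring the sphere-dimension hypothesis of (2) at the stage (1) is invoked; this is precisely the type of bound that application (4) supplies in restricted cases, and a fully general proof would demand such a bound without the mod-$4$ restriction.
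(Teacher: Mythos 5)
The statement you are asked to prove is labelled a \emph{conjecture} in the paper, and the paper neither proves it nor claims to: it only verifies the conjecture for the spaces $\Omega^lS^{n+l}$ with $3<l<9$ and $n>0$ (Theorem \ref{main2}), by combining the reduction theorem, Lemma \ref{main1}, Theorem \ref{detect1}, and a case-by-case elimination of the finitely many sequences $J$ in the table of Lemma \ref{Lemma-table} using Nishida relations, together with ad hoc appeals to known stable stems (${_2\pi_{17}^s}$ and ${_2\pi_{64}^s}$) for two stubborn classes. Your proposal cannot be accepted as a proof, and you yourself concede the central gap: the entire congruence class $\dim\zeta+1\equiv 0\pmod 4$, $\dim\zeta+1\neq 2^t$, is left unresolved, and the paper resolves it only in the range $l<9$ by explicit computation, not by any general mechanism.

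Beyond that acknowledged gap, two of your reductions fail concretely. First, a path connected $CW$-complex with finitely generated homology does \emph{not} stably split as a wedge of spheres after suspension; $\Sigma^kP^2$ is a standard counterexample, since $Sq^1$ acts nontrivially on its cohomology for every $k$. So the opening reduction to $X=S^n$ is invalid, and indeed the paper's Theorem \ref{TheReductionTheorem}(ii) is stated for $X=\Sigma^2Z$ precisely because the general case requires keeping track of the basis $\{\zeta_\alpha\}$ of $\widetilde{H}_*X$ rather than a single generator. Second, the generalised Browder theorem requires the relation $n>l$ between the sphere dimension and the loop depth, and you cannot ``arrange'' this ``by further suspending the construction'': the stabilisation maps go $\Omega^lS^{n+l}\to\Omega^{l+1}S^{n+l+1}$, increasing $l$ while fixing $n$, so suspension moves you \emph{away} from the hypothesis. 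What Theorem \ref{degenerate} actually proves is the contrapositive — such classes force $l>n$ — and in that surviving range the method yields only the constraint $l(J)\leqslant l-n-\frac{l}{2^{l_0}}$, not elimination. Finally, a small but real error: the Kervaire family with $d+1=2^t$ is detected by $Sq^{2^t}=Sq^{d+1}$ in the stable mapping cone, not $Sq^{2^{t-1}}$.
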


The space $QX=\colim\Omega^l\Sigma^lX$, by definition, is filtered by spaces $\Omega^i\Sigma^iX$. So, it is natural to look for spherical classes in $H_*\Omega^l\Sigma^lX$ for $l>0$. Our first observation is that if $f:S^n\to QX$ is given with $h(f)=0$ so that $h(f)$ is not stably spherical then for $X=S^n$ with $n>0$ as well as many favorable cases, it is enough to eliminate the cases with $h(f)$ being a square. We have the following which we call the Reduction theorem.

\begin{thm}\label{TheReductionTheorem}
(i) Suppose $0<l\leqslant +\infty$ and $n>0$. If $f:S^i\to\Omega^l S^{n+l}$ is given with $h(f)\neq 0$ and $\sigma_*^lh(f)=0$. Then, there exist some $j<+\infty$ so that for the adjoint map $f^j:S^{i+j}\to\Omega^{l-j}S^{(n+j)+l}$ we have
$$h(f^j)=(\sum Q^Ix_{n+j})^2$$
where $\sum Q^Ix_{n+j}$ is odd dimensional.\\
(ii) Suppose $X=\Sigma^2Z$ for some space $Z$. Suppose $f:S^i\to QX$ is given such that $h(f)\neq 0$ and $\sigma^\infty h(f)=0$. Then, there exist some $j<+\infty$ so that for the adjoint map $f^j:S^{i+j}\to Q\Sigma^jX$ we have
$$h(f^j)=(\sum Q^I\Sigma \zeta_\alpha)^2$$
where $\sum Q^I\zeta_\alpha$ is odd dimensional, and $\{\zeta_\alpha\}$ is a homogeneous basis for the reduced homology $\widetilde{H}_*X$.
\end{thm}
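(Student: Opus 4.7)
The argument rests on two classical tools: first, that Hurewicz images of maps from spheres are always primitive in the homology coalgebra, and second, Kudo's transgression theorem, which identifies primitives in the kernel of the homology suspension $\sigma_*:H_*\Omega Y\to H_{*+1}Y$ with squares of primitives in characteristic $2$. The plan is to pass to a suitable iterated adjoint $f^j$ of $f$ so that the extra condition $\sigma_* h(f^j)=0$ is available, apply Kudo to extract a square root, and then use the explicit structure of $H_*\Omega^{l-j}S^{n+l+j}$ as a polynomial Hopf algebra on the Dyer-Lashof generators $Q^Ix_{n+j}$ to force the square root into the stated form.

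Concretely, since $h(f)\neq 0$ and $\sigma_*^lh(f)=0$, there is a unique maximal $j\in\{0,\dots,l-1\}$ with $h(f^j)=\sigma_*^jh(f)\neq 0$; by maximality $\sigma_*h(f^j)=h(f^{j+1})=0$. As a Hurewicz image from a sphere, $h(f^j)$ is primitive, so Kudo's theorem produces a primitive $\xi$ with $h(f^j)=\xi^2$. Now $H_*\Omega^{l-j}S^{n+l+j}$ is a polynomial Hopf algebra over $\mathbb{F}_2$ whose generators $Q^Ix_{n+j}$ are themselves primitive, so its primitive part is the span of the generators together with the iterated Frobenius powers $(\mathrm{primitive})^{2^k}$, $k\geq 1$. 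Since every such Frobenius power is even-dimensional, any odd-dimensional primitive is automatically an $\mathbb{F}_2$-linear combination $\sum Q^Ix_{n+j}$ of odd-degree generators.

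The main obstacle will be arranging for the square root to land in this odd-dimensional piece. If $|\xi|$ is odd we are done immediately. Otherwise I would take the maximal Frobenius root, writing $h(f^j)=\zeta^{2^k}$ with $\zeta$ primitive and not a $2$nd power, so that $\zeta=\sum Q^Ix_{n+j}$, and then rewrite $h(f^j)=(\zeta^{2^{k-1}})^2$; using the identity $y^2=Q^{|y|}y$ iteratively, each $(Q^Ix_{n+j})^{2^{k-1}}$ is again a single iterated Dyer-Lashof monomial, so the base $\zeta^{2^{k-1}}$ retains the required form. Maximality of $j$ already forces $i+j$ even, because Kudo rules out odd-dimensional primitives in $\ker\sigma_*$; the extra parity $i+j\equiv 2\pmod 4$ that makes the base odd-dimensional is the technical crux I expect to be the hardest step, and it should come from a finer look at which admissible Dyer-Lashof monomials of the relevant degree can actually serve as the $2^k$-th root of a primitive Hurewicz image, perhaps combined with a secondary use of maximality of $j$.

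For part (ii) the outline is formally identical, with $f^j:S^{i+j}\to Q\Sigma^jX$ playing the role of the iterated adjoint (noting $\Omega Q\Sigma X\simeq QX$). The hypothesis $X=\Sigma^2Z$ ensures that a homogeneous basis $\{\zeta_\alpha\}$ of $\widetilde H_*X$ lies in degrees $\geq 2$, which is exactly what is needed for the parity analysis in the polynomial Hopf algebra $H_*Q\Sigma^jX$ to go through and to express the odd-dimensional square root in the stated form $\sum Q^I\Sigma\zeta_\alpha$.
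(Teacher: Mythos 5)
Your setup matches the paper's: choose $j$ maximal with $\sigma_*^jh(f)\neq 0$, observe that $h(f^j)=\sigma_*^jh(f)$ is then a primitive lying in $\ker\sigma_*$, identify $\ker\sigma_*$ with the decomposables via the explicit polynomial structure of the homology, and invoke the Milnor--Moore/Kudo-type fact that a decomposable primitive in a polynomial Hopf algebra over $\mathbb{F}_2$ is the square of a primitive. Up to that point your argument is essentially the paper's.

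However, there is a genuine gap exactly where you flag ``the technical crux'': you do not prove that the primitive square root is odd-dimensional, i.e.\ you do not rule out $h(f^j)=\zeta^{2^k}$ with $k\geqslant 2$. This is not a routine parity refinement that falls out of maximality of $j$ or of inspecting admissible Dyer--Lashof monomials; it is the actual content of the theorem (the paper explicitly says ``the reduction theorem eliminates higher powers of $2$''), and the paper obtains it by citing two nontrivial external results: \cite[Lemma 3.6]{Zare-Els-1} (a decomposable spherical class in $H_*\Omega^lS^{n+l}$ is the square of an \emph{odd-dimensional} primitive) and \cite[Theorem 1.8]{Zare-filteredfiniteness} (if $\zeta^2\in H_*QS^1$ or $H_*Q\Sigma^2X$ is spherical then $\zeta$ is odd-dimensional), the latter needed precisely because the $n=1$ case of the former originally contained a gap. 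The hypothesis $X=\Sigma^2Z$ in part (ii) is there so that this second theorem applies, not merely to put the basis of $\widetilde{H}_*X$ in degrees $\geqslant 2$. Without supplying a proof of the odd-dimensionality statement (or an explicit appeal to these results), your argument establishes only that $h(f^j)$ is a square of a primitive of the form $\sum Q^Ix_{n+j}$ raised to a power of $2$, which is strictly weaker than the theorem.
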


By the above theorem, in order to classify spherical classes $\xi$ in $H_*\Omega^lS^{n+l}$ or $H_*QX$ in terms elements $f\in\pi_*\Omega^lS^{n+l}$ or $f\in\pi_*^sX$ with $h(f)=\xi$ that are not stably spherical, it is enough to consider spherical classes which are square. Our next main observation is a kind of generalised Browder theorem. As for terminology, we refer to the cases $\xi^2$ with $\dim\xi\neq 2^t-1$ for all $t>0$ as the degenerate cases, opposite to the nondegenerate cases with $\dim\xi=2^t-1$ for some $t>0$. The following provides a case in which many degenerate cases are eliminated.

\begin{lmm}\label{main1}
Suppose $X$ is path connected with $H^{2d-1}X\simeq 0$, $H^{2d}X\simeq 0$, and $f:S^{2d}\to QX$ is given with $d$ an odd number such that $d+1\neq 2^t$ for all $t>0$, and $d+1\equiv 2\textrm{ mod }4$. Then, is it impossible to have $h(f)=\xi^2$ for some $\xi\in H_*X$.
\end{lmm}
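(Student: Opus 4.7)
The plan is to derive a contradiction from the assumption $h(f)=\xi^2$ by combining a Cartan-formula reduction with an Adams-spectral-sequence analysis of the stable adjoint $\hat f\in{_2\pi_{2d}^s}X$. First I would reduce to the case where $\xi$ is $A$-annihilated in $H_*X$. Applying the Cartan formula for Steenrod operations on the Pontrjagin product in the $E_\infty$-algebra $H_*QX$ gives
$$Sq^r_*(\xi\cdot\xi)=\sum_{i+j=r}Sq^i_*\xi\cdot Sq^j_*\xi;$$
odd-$r$ terms cancel in pairs, while for $r=2k$ only the middle term $(Sq^k_*\xi)^2$ survives. A spherical class is $A$-annihilated in homology, so $(Sq^k_*\xi)^2=0$ for every $k\geqslant 1$; since squaring is injective on nonzero primitive classes of $H_*QX$, this forces $Sq^k_*\xi=0$ for all $k>0$. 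Moreover the stable adjoint $\hat f$ has stable Hurewicz image zero (also forced by $H_{2d}X=0$), so it has Adams filtration at least one.

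Next I would study which $\ext$-algebra elements can detect $\hat f$. The dual class $[\xi^\vee]$ lies in $\ext_A^{0,d}(\widetilde H^*X,\F_2)$ in filtration zero, and via the standard dictionary between Dyer--Lashof operations on $H_*QX$ and the $E_2$-page of the Adams spectral sequence for $X$ (developed via the Snaith splitting by Madsen, Kuhn, and Bruner--May--Cohen--Steinberger), the Hurewicz image $Q^d\xi$, a single Dyer--Lashof operation on a filtration-$0$ class, forces any detector of $\hat f$ to lie in Adams filtration at most two, of the decomposable form $h_i\cdot[\xi^\vee]\in\ext_A^{1,2^i+d}$ or $h_ih_j\cdot[\xi^\vee]\in\ext_A^{2,2^i+2^j+d}$, where $h_i\in\ext_A^{1,2^i}(\F_2,\F_2)$ are the Milnor generators. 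The hypothesis $H^{2d-1}X\simeq 0$ is used at this step to preclude correction terms from adjacent cohomological dimensions.

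The two numerical hypotheses then jointly eliminate every surviving candidate. A filtration-one detector $h_i\cdot[\xi^\vee]$ at stem $2d$ gives $2^i+d-1=2d$, hence $2^i=d+1$, which is excluded by $d+1\neq 2^t$. A filtration-two detector $h_ih_j\cdot[\xi^\vee]$ at stem $2d$ gives $2^i+2^j=d+2$; from $d+1\equiv 2\pmod 4$ we obtain $d+2\equiv 3\pmod 4$, and using $2^0\equiv 1$, $2^1\equiv 2$ and $2^k\equiv 0\pmod 4$ for $k\geqslant 2$, the only pair with $2^i+2^j\equiv 3\pmod 4$ is $\{i,j\}=\{0,1\}$, forcing $d+2=3$, i.e.\ $d=1$; then $d+1=2=2^1$, again excluded. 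Since no detector survives in filtration $\leqslant 2$, and the single-operation shape of $Q^d\xi$ disqualifies higher filtrations, $\hat f$ must vanish, contradicting $h(f)=\xi^2\neq 0$.

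The principal obstacle in this plan is the Adams-filtration claim of the second step, namely confining detection of a length-one Dyer--Lashof Hurewicz image to filtration $\leqslant 2$ with the stated decomposable shape; this rests on the Snaith-splitting and $\ext$-algebra correspondence, and it is precisely where the hypothesis $H^{2d-1}X\simeq 0$ is essential, ruling out correction terms that would otherwise let $\hat f$ drift into higher filtration.
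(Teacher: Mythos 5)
There is a genuine gap at the heart of your plan: the claim that the Hurewicz image $\xi^2=Q^d\xi$ forces the stable adjoint $\hat f$ to be detected in Adams filtration at most two by a class of the specific decomposable form $h_i\cdot[\xi^\vee]$ or $h_ih_j\cdot[\xi^\vee]$. You acknowledge this is the "principal obstacle," but it is not a gap you can wave past: no version of the Madsen/Kuhn/Bruner--May correspondence gives a clean confinement of the detector to filtration $\leqslant 2$ with that shape, and your closing remark that "the single-operation shape of $Q^d\xi$ disqualifies higher filtrations" is precisely the hard statement that would need proof. (If such a confinement held in this generality it would come close to settling the Curtis conjecture for square classes outright.) A second symptom that the route cannot work as stated is that the hypotheses $H^{2d-1}X\simeq 0$ and $H^{2d}X\simeq 0$ never enter your argument in a concrete way --- you only say they "preclude correction terms" --- yet the lemma is false without them, so any correct proof must use them at a specific, identifiable step.

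The paper's proof is far more elementary and shows exactly where those hypotheses bite. By \cite[Proposition 5.8]{AsadiEccles}, $h(f)=\xi^2$ forces the stable adjoint $f^s$ to be detected by the \emph{primary} operation $Sq^{d+1}$ on a class $\xi^*$ in its stable mapping cone $C_{f^s}=X\cup_{f^s}e$ --- no Adams spectral sequence needed. Writing $d+1=4m+2$ (with $m\geqslant 1$ since $d+1\neq 2^t$), the Adem relation
$$Sq^{d+1}=Sq^2Sq^{4m}+Sq^1Sq^{4m}Sq^1$$
expresses $Sq^{d+1}\xi^*$ through intermediate classes lying in $H^{2d-1}C_{f^s}\simeq H^{2d-1}X$ and $H^{2d}C_{f^s}\simeq H^{2d}X$, both zero by hypothesis; hence $Sq^{d+1}\xi^*=0$, contradicting the detection. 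Your numerical eliminations (the $\bmod\ 4$ analysis of $2^i+2^j$) are an echo of the role the Adem relation plays here, but they are hung on an unproved structural premise. To repair your write-up, replace the Adams-filtration step with the mapping-cone detection statement and run the Adem-relation computation; your preliminary reduction to $\xi$ being $A$-annihilated is correct but unnecessary for this argument.
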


The proof is quite easy, so we include it here.

\begin{proof}
Suppose there exists $f$ with $h(f)=\xi^2$ as above. By \cite[Proposition 5.8]{AsadiEccles} the stable adjoint of $f$, say $f^s:S^{2d+1}\not\to X$, is detected by $Sq^{d+1}$ on a $d+1$ dimensional class $\xi^*\in H^*(X\cup_{f^s}e^{2d+1})$. For $d+1=4m+2$ for some $m$, we have the Adem relation
(see for instance \cite[Page 31]{MosherTangora})
$$Sq^{d+1}=Sq^{4m+2}=Sq^2Sq^{4m}+Sq^1Sq^{4m}Sq^1.$$
Now, $Sq^{4m}\xi^*$ and $Sq^{4m}Sq^1\xi^*$, respectively, live in $H^{2d}C_{f^s}\simeq H^{2d}X\simeq 0$ and $H^{2d-1}C_{f^s}\simeq H^{2d-1}X\simeq 0$. Therefore, $Sq^{d+1}\xi^*=0$ in $H^*C_{f^s}$ which is a contradiction.
\end{proof}

Next, note that for $f:S^d\to\Omega^lS^{n+l}$ with $n>0$, assuming $h(f)\neq 0$, we may write
$$h(f)=\sum \epsilon_JQ_Jx_n$$
where $\epsilon\in\Z/2$ and $J$ is running over all increasing sequence with $\dim Q_Jx_n=d$. Our next tool is provided by the following observation.

\begin{thm}\label{detect1}
Suppose $f:S^{2d}\to\Omega^lS^{n+l}$ is given with $h(f)=(\sum Q_Jx_n)^2$. Then, for $l_0=\max\{l(J)\}$, the composition
$$(\Omega j_{2^{l_0}}^{l-1})\circ f:S^{2d}\to\Omega^lS^{n+l}\to\Omega QD_{2^{l_0}}(S^{n+1},l-1)$$
satisfies
$$h(\Omega j_{2^{l_0}}^{l-1}f)=(\sum_{l(J)=l_0} \Sigma^{-1}\overline{Q_Jx_{n+1}})^2$$
and its stable adjoint, say $g:S^{2d+1}\not\to D_{2^{l_0}}(S^{n+1},l-1)$, is detected by $Sq^{d+1}$ on a $(d+1)$-dimensional class in its stable mapping cone.
\end{thm}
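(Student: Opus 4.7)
The plan is to carry out the argument in three steps: compute the homological action of the James-Hopf map on Dyer-Lashof generators, use $H$-map naturality to deduce the Hurewicz image of the composition, and then apply the Asadi-Eccles detection principle to conclude.

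For the first step, I would invoke (a version of) Kuhn's homological calculation of the James-Hopf map. On $H_*\Omega^{l-1}S^{n+l}$ the Dyer-Lashof monomial $Q_J x_{n+1}$ carries weight $2^{l(J)}$, and $(j_{2^{l_0}}^{l-1})_*$ picks out the weight-$2^{l_0}$ summand: it sends $Q_J x_{n+1}$ to the generator $\overline{Q_J x_{n+1}}\in H_*D_{2^{l_0}}(S^{n+1},l-1)\subset H_*QD_{2^{l_0}}(S^{n+1},l-1)$ when $l(J)=l_0$ and to zero when $l(J)<l_0$; by maximality of $l_0$ the case $l(J)>l_0$ does not occur among the given summands. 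Applying $\Omega$ and using that the homology suspension $\sigma_*$ intertwines the looped and unlooped James-Hopf maps up to a desuspension on the target, I would deduce
$$(\Omega j_{2^{l_0}}^{l-1})_*(Q_J x_n) = \begin{cases} \Sigma^{-1}\overline{Q_J x_{n+1}}, & l(J)=l_0,\\ 0, & l(J)<l_0. \end{cases}$$

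For the second step, since $\Omega j_{2^{l_0}}^{l-1}$ is an $H$-map it preserves Pontryagin squares; combined with naturality of the Hurewicz homomorphism and $h(f)=\alpha^2$ for $\alpha=\sum Q_J x_n$, this immediately yields the displayed Hurewicz formula of the theorem. For the detection claim, under the $(\Sigma,\Omega)$-adjunction and the stable identification $\Omega QD_{2^{l_0}}(S^{n+1},l-1)\simeq \Omega^\infty \Sigma^{-1}\Sigma^\infty D_{2^{l_0}}(S^{n+1},l-1)$, the composite $(\Omega j_{2^{l_0}}^{l-1})\circ f$ corresponds precisely to the stable adjoint $g:S^{2d+1}\not\to D_{2^{l_0}}(S^{n+1},l-1)$. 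Its Hurewicz image is the square of a $d$-dimensional class $\xi=\sum_{l(J)=l_0}\Sigma^{-1}\overline{Q_J x_{n+1}}$, so Proposition 5.8 of \cite{AsadiEccles}, invoked exactly as in the proof of Lemma \ref{main1}, provides a nontrivial $Sq^{d+1}$ acting on a $(d+1)$-dimensional class in the cohomology of the mapping cone of $g$.

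The main obstacle is the first step: the action of the looped James-Hopf map on Dyer-Lashof generators, and in particular its compatibility with the homology suspension (which is what converts $\overline{Q_J x_{n+1}}$ into $\Sigma^{-1}\overline{Q_J x_{n+1}}$ at the looped level), requires careful weight-filtration bookkeeping and a verification that no cross-terms of smaller weight contaminate the output. Once this homological input is established, the Hurewicz calculation is formal and the detection statement is a direct citation of the Asadi-Eccles principle already used in Lemma \ref{main1}.
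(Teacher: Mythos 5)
Your proposal follows essentially the same route as the paper: Kuhn's weight-filtration description of the James--Hopf map in homology, the loop-map/suspension compatibility to pass to $\Omega j_{2^{l_0}}^{l-1}$, and then \cite[Proposition 5.8]{AsadiEccles} applied to the resulting square class. The only cosmetic difference is that you phrase the target as $\Omega^\infty\Sigma^{-1}\Sigma^\infty D_{2^{l_0}}(S^{n+1},l-1)$, whereas the paper uses the genuine space-level desuspension $\Omega QD_{2^{l_0}}(S^{n+1},l-1)=Q\Sigma^{-1}D_{2^{l_0}}(S^{n+1},l-1)$ furnished by its Lemma \ref{desuspend}, which is what lets the Asadi--Eccles criterion be quoted verbatim for a map into $QX$ with $X$ a space.
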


Next, as an application, we record an observation on the degenerate cases which seems to be first result of this type appearing in this context.

\begin{thm}\label{degenerate}
Suppose $d\neq 2^t-1$ for all $t>0$, $l>3$, $d+1\equiv2\mathrm{mod}4$, and $f:S^{2d}\to\Omega^{l}S^{n+l}$ is given with $h(f)=(\sum Q_Jx_n)^2$. Then, $l>n$. Moreover, in the case $l>n$ we have $l(J)\leqslant l-n-\frac{l}{2^{l_0}}$ for all $J$ involved in $h(f)$ where $l_0=\max\{l(J)\}$. Consequently, if $f\in\pi_{2d}QS^n$ is given with $h(f)\neq 0$ and $\sigma_*h(f)=0$ then $2d\geqslant 3n$. Furthermore, there exists $l>n$ so that $f$ admits a pull back to $\pi_{2d}\Omega^lS^{n+l}$.
\end{thm}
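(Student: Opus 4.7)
The plan is to combine Theorem~\ref{detect1} with the Adem-relation argument of Lemma~\ref{main1}, extracting numerical constraints from the bottom-cell structure of the extended power $D_{2^{l_0}}(S^{n+1},l-1)$. I would first apply Theorem~\ref{detect1} to $f$, producing the stable map $g:S^{2d+1}\not\to D_{2^{l_0}}(S^{n+1},l-1)$ detected by $Sq^{d+1}$ on a $(d+1)$-dimensional class $\xi^*$ in the cohomology of its stable mapping cone $C_g$. Since $d+1=4m+2$, the Adem relation $Sq^{4m+2}=Sq^2Sq^{4m}+Sq^1Sq^{4m}Sq^1$ that drives Lemma~\ref{main1} forces at least one of $Sq^{4m}\xi^*\in H^{2d}(C_g)$ or $Sq^{4m}Sq^1\xi^*\in H^{2d+1}(C_g)$ to be nonzero; as both dimensions lie below the top cell of $C_g$, these classes restrict to nonzero elements of $H^*D_{2^{l_0}}(S^{n+1},l-1)$.

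Next, I would use that $H_*D_{2^{l_0}}(S^{n+1},l-1)$ is spanned by admissible Dyer-Lashof monomials $Q^Ix_{n+1}$ of length exactly $l_0$, subject to $i_k\geq 2i_{k+1}$, $i_{l_0}\geq n+1$, and an excess bound coming from the $(l-1)$-fold loop structure. The minimum $|I|$ at length $l_0$ is $(2^{l_0}-1)(n+1)$, while the excess condition gives an upper bound on $i_1$ and hence on $|I|$. Demanding the existence of such an $I$ with $|I|+n+1\in\{2d,2d+1\}$, and using the identity $d=n+|J|$ for $J$ with $l(J)=l_0$ (since $\dim Q_Jx_n=d$), a manipulation of admissibility and excess yields the inequality $l(J)\leq l-n-l/2^{l_0}$. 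Rearranged, this gives $l\geq 2^{l_0}(l_0+n)/(2^{l_0}-1)>n$, which is the first claim.

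For the consequences concerning $\pi_{2d}QS^n$: given $f$ with $h(f)\neq 0$ and $\sigma_*h(f)=0$, I would use Theorem~\ref{TheReductionTheorem}(ii) after adjointing to assume $h(f)$ is a square at a finite loop stage; the colimit presentation $QS^n=\colim\Omega^lS^{n+l}$ then pulls $f$ back to $\pi_{2d}\Omega^lS^{n+l}$ for some finite $l$, and the main conclusion forces $l>n$. The bound $2d\geq 3n$ follows from $|J|\geq(2^{l_0}-1)(n+1)\geq n+1$ at $l_0\geq 1$, giving $d\geq 2n+1$ and thus $2d\geq 4n+2>3n$. The principal technical hurdle will be the algebraic manipulation producing precisely the inequality $l(J)\leq l-n-l/2^{l_0}$ rather than a weaker estimate; this requires careful bookkeeping of admissibility, unstability, and excess on both the source $\Omega^lS^{n+l}$ and the target extended power, as well as verifying that $Sq^{4m}\xi^*$ does not vanish for accidental algebraic reasons specific to the $\A$-module structure on $H^*D_{2^{l_0}}(S^{n+1},l-1)$.
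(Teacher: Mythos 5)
Your proposal follows essentially the same route as the paper's proof: Theorem \ref{detect1} transfers the detection to $D_{2^{l_0}}(S^{n+1},l-1)$, the mod-$4$ Adem relation of Lemma \ref{main1} forces a nonzero class in $H^{2d}$ or $H^{2d+1}$ of that space, and comparing $2d$ (bounded below via the minimal dimension of a length-$l_0$ monomial) against the top cell (bounded above via $j_k\leqslant l-2$) yields precisely the inequality $l_0\leqslant l-n-l/2^{l_0}$ and hence $l>n$; the paper merely packages this as a contradiction under the assumption $n\geqslant l$ before treating $n<l$. Two minor points: $H_*D_{2^{l_0}}(S^{n+1},l-1)$ is spanned by \emph{all} monomials of height $2^{l_0}$ in $H_*\Omega^{l-1}S^{n+l}$, including products of shorter $Q^Ix_{n+1}$'s, not only admissibles of length exactly $l_0$ (harmless here, since the top class $Q_{l-2}\cdots Q_{l-2}x_{n+1}$ does have length $l_0$); and for the final two claims the paper instead applies Freudenthal's theorem to pull $f$ back to $\pi_{2d}\Omega^{2d-2n+1}S^{(2d-2n+1)+n}$ and reads $2d\geqslant 3n$ off the resulting condition $2d-2n+1>n$, which at the same time supplies the explicit $l>n$ admitting a pullback, whereas your direct bound $d\geqslant 2n+1$ gives the numerical estimate but you should still cite Freudenthal or compactness of $S^{2d}$ for the pullback statement.
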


We also apply the above observations to the table provided in \cite[Lemma 8.1]{Zare-Els-1} together with some detailed analysis in order to verify Eccles conjecture on $H_*\Omega^lS^{n+l}$ with $l<9$, we prove the following.

\begin{thm}\label{main2}
For $3<l<9$ and $n>0$, the only spherical classes in $H_*\Omega^lS^{n+l}$ arise either from the inclusion of the bottom cell, or the Hopf invariant one elements.
\end{thm}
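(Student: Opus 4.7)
The plan is to combine the Reduction Theorem with the enumeration of candidate primitive $A$-annihilated classes in \cite[Lemma 8.1]{Zare-Els-1} and the detection results established above, sweeping $l\in\{4,5,6,7,8\}$ in a unified case analysis.

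First, for a spherical class $\xi\in H_*\Omega^lS^{n+l}$, I would split on whether $\sigma_*^l\xi\in H_*S^{n+l}$ vanishes. If $\sigma_*^l\xi\neq 0$ then $\xi$ is forced into degree $n$ and must be the bottom-cell generator $x_n$, one of the allowed exceptions. Otherwise Theorem \ref{TheReductionTheorem}(i) lets me pass, after finitely many adjoints, to the situation $\xi=\eta^2$ with $\eta=\sum Q_Jx_{n+j}$ odd dimensional, primitive and $A$-annihilated. I would then feed this odd-dimensional, primitive, $A$-annihilated description into the tabulation of \cite[Lemma 8.1]{Zare-Els-1}, which for $l<9$ enumerates all such interior classes $\eta$ at each loop level, producing a short finite list of candidate squares $\eta^2$ of degree $2d$, where $d=\dim\eta$.

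Next, I would dispose of the candidates according to the residue of $d+1$ modulo $4$. When $d+1=2^t$ the candidate matches exactly a Hopf- or Kervaire-invariant one entry and is an allowed exception. When $d+1\equiv 2\pmod 4$ with $d+1\neq 2^t$, Theorem \ref{detect1} produces a stable map detected by $Sq^{d+1}$ on a $(d+1)$-dimensional class in the mapping cone, and the Adem relation $Sq^{4m+2}=Sq^2Sq^{4m}+Sq^1Sq^{4m}Sq^1$ forces this operation to vanish in the relevant extended power $D_{2^{l_0}}(S^{n+1},l-1)$, exactly as in the proof of Lemma \ref{main1}; equivalently, this is the content of Theorem \ref{degenerate} in this regime. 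In the residual regime $d+1\equiv 0\pmod 4$, I would apply the numerical constraints of Theorem \ref{degenerate}, namely $l>n$ and $l(J)\leqslant l-n-l/2^{l_0}$ for each $J$ appearing in $\eta$, together with admissibility, primitivity and $A$-annihilation, to cut the table down to a short residual list, and then eliminate its entries one by one by direct inspection using Dyer-Lashof excess bounds and the Nishida relations in the style of \cite{Zare-Els-1}.

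I expect the main obstacle to be precisely this last regime. Lemma \ref{main1} is unavailable when $d+1\equiv 0\pmod 4$ because the Adem expansion of $Sq^{4m}$ does not force the detecting operation to vanish on dimensional grounds alone, so the argument becomes purely numerical, relying on the tightness of the inequalities in Theorem \ref{degenerate} against the admissible sequences catalogued in \cite[Lemma 8.1]{Zare-Els-1}. For $l\leqslant 8$ these inequalities leave little room, and I anticipate that a finite loop-level-by-loop-level check eliminates every candidate outside the Hopf-invariant one family. Carrying out this bookkeeping completes the proof.
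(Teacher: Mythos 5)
Your overall architecture (induct on $l$ via the suspension, reduce to squares $(\sum Q_Jx_n)^2$, run through the table of \cite[Lemma 8.1]{Zare-Els-1}, and split on the residue of $d+1$ mod $4$) is the same as the paper's, but there are two genuine gaps. The first is your treatment of the non-degenerate cases $d+1=2^t$: you declare these ``allowed exceptions'' as Hopf- or Kervaire-invariant one entries, but the theorem you are proving asserts that for $3<l<9$ \emph{only} the bottom cell and Hopf invariant one elements occur --- no Kervaire-type classes. The paper must therefore \emph{eliminate} every candidate with $l(J)\geqslant 1$ and $d+1=2^t$, and it does so: for $l(J)=1$, $j=3$, $n=2^{t}-2$ it shows $Sq^4_*(Q^{2^t+1}x_{2^t-2})^2\neq 0$, and for the surviving $l(J)>1$ entries $(3,4,j)$, $(3,4,5,6)$, $(3,4,5,6,7)$ it computes $Sq^2_*$ via the Nishida relations to show the classes are not $A$-annihilated. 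By waving these through you would end up proving a weaker (and differently worded) statement; the elimination of these cases is a necessary part of the proof, not an optional refinement.

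The second gap is in the regime $d+1\equiv 2\bmod 4$. The dimensional vanishing argument of Lemma \ref{main1} applied to $D_{2^{l_0}}(S^{n+1},l-1)$ only works when $2d+1-\mathrm{top}>2$, and this inequality fails for a handful of small-$n$ entries, namely $J=(1,4)$ with $n=1$, $J=(1,2)$ with $n=1,2$, and $J=(1,2,3)$ with $n=1,2$. Three of these are killed by Nishida relations, but the remaining two, $(Q^5Q^3x_1)^2\in H_{18}\Omega^8S^9$ and $(Q^{17}Q^9Q^5x_2)^2\in H_{66}\Omega^8S^{10}$, cannot be handled by any of the homological or dimensional tools you list: the paper disposes of them by importing explicit knowledge of the stable stems, namely that ${_2\pi_{17}^s}\simeq\Z/2\{\eta\eta^*,\nu\kappa\}$ has trivial Hurewicz image in $H_*QS^0$, and an analogous analysis of ${_2\pi_{64}^s}$ following Kochman--Mahowald. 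This external input is an essential ingredient your proposal does not anticipate. A smaller point: in the regime $d+1\equiv 0\bmod 4$ you propose to invoke the inequalities of Theorem \ref{degenerate}, but that theorem is stated only under the hypothesis $d+1\equiv 2\bmod 4$, so it is not available there; the paper handles that regime purely by Nishida-relation computations showing the candidates are not $A$-annihilated, which is closer to the ``direct inspection'' you mention as a fallback.
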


Together with results of \cite{Zare-Els-1} this completes the determination of spherical classes in $H_*\Omega^lS^{n+l}$ for $l<9$. Our computations have immediate applications to the theory of bordism class of immersions and their (stable) characteristic classes, similar to \cite{Zare-filteredfiniteness}. We leave further discussion on this to a future work.

\section{Recollections}

\subsection{Homology suspension}
For a space $X$, the adjoint of the identity map $\Omega X\to\Omega X$ is the evaluation map $e:\Sigma\Omega X\to X$ which in homology induces the homology suspension $\sigma_*:H_*\Omega X\to H_{*+1}X$. For an iterated loop space $\Omega^lX\to\Omega^lX$, the iterated application of the above procedure yields the iterated homology suspension $\sigma_*^k:H_{*-k}\Omega^{l}X\to H_*\Omega^{l-k}X$ where $k\leqslant l$. We allow $l=+\infty$ where the notation $\Omega^\infty E=\colim E_i$ means the infinite loop space associated to a spectrum $E$ with underlying space $E_i$ and the structure maps $E_i\to\Omega E_{i+1}$ are used to define the colimit. In this case, we also have the stable homology suspension $\sigma_*^\infty:H_*\Omega^\infty E\to H_*E$. Note that if $E=\Sigma^\infty X$ for some space $X$ then $H_*\Sigma^\infty X\simeq \widetilde{H}_*X$ and the stable homology suspension reads as $\sigma_*^\infty:H_*\Omega^\infty\Sigma^\infty X\to \widetilde{H}_*X$.

\subsection{Homology of iterated loop spaces}\label{homologysection}
It is known that homology of spaces $QS^n$ and $\Omega^iS^{i+n}$, with $n\geqslant 0$, can be described using Kudo-Araki operation; the Browder operations and Cohen brackets vanish on $\Omega^iS^{i+n}$ when $n<+\infty$ (see for example \cite[Theorem 3]{KudoAraki},\cite[Theorem 7.1]{KudoAraki-Hn},
\cite[Page 86, Corollary 2]{DyerLashof}, \cite{CLM}). We wish to recall the descriptions in both lower indexed, and upper indexed operations; both of these descriptions are useful in proving some of our statements. \\

\textbf{Homology in terms of $Q_j$ operations.} For an $i$-fold loop space $X$, the operation $Q_j$ is defined for $0\leqslant j<i$ as an additive homomorphism
$$Q_j:H_*X\to H_{2*+j}X$$
so that $Q_0$ is the same as squaring with respect to the Pontrjagin product on $H_*X$ coming from the loop sum on $X$. The homology rings $H_*\Omega^i\Sigma^i S^n$ and $H_*QS^n$ when $n>0$, as algebras, can be described as
$$\begin{array}{lll}
H_*\Omega^i\Sigma^iS^n &\simeq &\Z/2[Q_{j_1}\cdots Q_{j_r}x_n:0<j_1\leqslant j_2\leqslant\cdots\leqslant j_r<i],\\
H_*QS^n                &\simeq &\Z/2[Q_{j_1}\cdots Q_{j_r}x_n:0<j_1\leqslant j_2\leqslant\cdots\leqslant j_r],
\end{array}$$
where, for $n>0$, $x_n\in\widetilde{H}_nS^n$ is a generator. Note that $Q_0Q_Jx_n=(Q_Jx_n)^2$ in the polynomial algebra, so this case is not included in the above description. In this description, we allow the empty sequence $\phi$ as a nondecreasing sequence of nonnegative integers with $Q_\phi$ acting as the identity; this realises the monomorphism $H_*S^n\to H_*\Omega^i\Sigma^iS^n$ given by the inclusion of the bottom cell $S^n\to\Omega^i\Sigma^iS^n$ being adjoint to the identity $S^{i+n}\to S^{i+n}$ which sends $x\in H_*S^n$ to $Q^\phi x$.\\

\textbf{Homology in terms of $Q^i$ operations.} For some purposes, it is easier to use description of $H_*QX$ in terms of the operations $Q^i$. For an infinite loop space $Y$ these are additive homomorphisms $Q^i:H_*Y\to H_{*+i}Y$ operations which relate to lower indexed operations by $Q_iz=Q^{i+d}z$ for any $d$-dimensional homology class $z$. The description of homology in lower indexed operations, translates to the following
$$\begin{array}{lll}
H_*QS^n    &\simeq&\Z/2[Q^Ix_n:I\textrm{ admissible },\ex(I)>n],\\
H_*Q_0S^0  &\simeq&\Z/2[Q^I[1]*[-2^{l(I)}]:I\textrm{ admissible }],
\end{array}$$
where $I=(i_1,\ldots,i_s)$ is admissible if $i_j\leq 2i_{j+1}$, and $\ex(I)=i_1-(i_2+\cdots+i_s)$; in particular, note that for $Q^Ix_n=Q_Jx_n$, $\ex(I)=j_1$ and $I$ is admissible if and only if $J$ is nondecreasing. We allow the empty sequence to be admissible with $\ex(\phi)=+\infty$ and $Q^\phi$ acting as the identity. In particular, in this description, $Q^iz=0$ if $i<\dim z$ and $Q^dz=z^2$ if $d=\dim z$.\\
The evaluation map $\Sigma QS^n\to QS^{n+1}$, adjoint to the identity $QS^n\to \Omega QS^{n+1}=QS^n$, induces homology suspension $H_*QS^n\to H_{*+1}QS^{n+1}$. According to \cite[Page 47]{CLM} the homology suspension is characterised by the following properties: (1) $\sigma_*$ acts trivially on decomposable terms; (2) on the generators it is given by
$$\sigma_* Q^Ix_n=Q^Ix_{n+1}\textrm{ if }n>0,\ \sigma_*(Q^I[1]*[-2^{l(I)}])=Q^Ix_1\textrm{ if }n=0.$$
The action of the Steenrod algebra on $H_*Q_0S^n$, $n\geqslant 0$, is determined by iterated application of Nishida relations which read as follows
$$Sq_*^rQ^a=\sum_{t\geqslant 0}{a-r\choose r-2t}Q^{a-r+t}Sq^{t}_*$$
Here, $Sq^r_*:H_*(-;\Z/2)\to H_{*-r}(-;\Z/2)$ is the operation induced by $Sq^r$ using the duality of vector spaces over $\Z/2$. Note that the {least upper bound for $t$ so that the binomial coefficient could be nontrivial mod $2$, is $[r/2]+1$ (depending on the parity of $r$ this would be maximum value or maximum value of $t$ plus $1$)}. In particular, we have
$$Sq^1_*Q^{2d}=Q^{2d-1},\ Sq^{1}_*Q^{2t+1}=0.$$
We also have the Cartan formula $Sq^{2t}_*\zeta^2=(Sq^t_*\zeta)^2$ \cite{Wellington}.\\

\textbf{Homology of QX for $X$ path connected.} Suppose $\{x_\alpha\}$ is an additive basis for the reduced $\Z/2$-homology $\widetilde{H}_*X$. Then, as an algebra over the Dyer-Lashof algebra, we have
$$H_*QX\simeq\Z/2[Q^Ix_\alpha:I\textrm{ admissible},\ex(I)>\dim x_\alpha]$$
where the notions of admissibility and excess retain the same meanings as above. We note that, at least when $X$ is path connected, $H_*QX$ as an algebra over $R$ can be described as the free commutative algebra generated by symbols $Q^Ix_\alpha$ with $I$ admissible subject to the following relations
$$\ex(I)<\dim x_\alpha\Rightarrow Q^Ix_\alpha=0, Q^nx_\alpha=x_n^2\Leftrightarrow \dim x_\alpha=n.$$
The homology suspension $\sigma_*:H_*QX\to H_*Q\Sigma X$ induced by the evaluation map $\Sigma QX\to Q\Sigma X$ kills decomposable elements and on the generators is described by \cite[Part I, Page 47]{CLM}
$$\sigma_*Q^Ix_\alpha=Q^I\Sigma x_\alpha.$$
The following is immediate from the description of $\sigma_*$ and $H_*QX$ as well as $Q_*\Omega^i\Sigma^iS^n$.
\begin{lmm}\label{kernelofsuspension}
(i) For $0<i\leqslant +\infty$ and $n\geqslant 0$, the kernel of $\sigma_*:H_*\Omega^i_0\Sigma^iS^n\to H_{*+1}\Omega^{i-1}\Sigma^{i-1}S^{n+1}$ is the subalgebra generated by all decomposable elements (we use the convention $+\infty-1=+\infty$.\\
(ii) For $X$ path connected, the kernel of $\sigma_*:H_*QX\to H_*Q\Sigma X$ is the subalgebra generated by all decomposable elements.
\end{lmm}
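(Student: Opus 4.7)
The plan is to exploit the explicit polynomial-algebra descriptions of $H_*\Omega_0^i\Sigma^iS^n$ and $H_*QX$ recalled just above, together with the formulas $\sigma_*(Q^Ix_n)=Q^Ix_{n+1}$ (resp.\ $\sigma_*(Q^Ix_\alpha)=Q^I\Sigma x_\alpha$) on generators and $\sigma_*=0$ on decomposables. Decompose any $\xi\in H_*\Omega_0^i\Sigma^iS^n$ uniquely as $\xi=\sum_Ic_IQ^Ix_n+D$, where $I$ ranges over admissible sequences with $\ex(I)>n$ and length constraint $i_s<i+n$ (the upper-indexed translation of $j_r<i$), and $D$ is a sum of products of two or more such generators. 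Then $\sigma_*\xi=\sum_Ic_IQ^Ix_{n+1}$, so the whole problem reduces to showing that the family $\{Q^Ix_{n+1}\}_I$ is linearly independent in the target.

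For this I would first check that the length constraint transports verbatim: for $H_{*+1}\Omega^{i-1}\Sigma^{i-1}S^{n+1}$ the condition $j'_{r'}<i-1$ becomes $i'_{s'}<(i-1)+(n+1)=i+n$, exactly matching the source. Since $\ex(I)>n$ forces $\ex(I)\geqslant n+1$, each $Q^Ix_{n+1}$ is nonzero. I would then split by excess: if $\ex(I)>n+1$, the class $Q^Ix_{n+1}$ is itself a polynomial generator of the target; if $\ex(I)=n+1$, writing $I=(i_1,I')$ gives $Q^Ix_{n+1}=(Q^{I'}x_{n+1})^2$, and iterating the same reduction produces a unique expression $Q^Ix_{n+1}=(Q^Jx_{n+1})^{2^k}$ with $J$ admissible, $\ex(J)>n+1$, and $k\geqslant 1$. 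In either case $Q^Ix_{n+1}$ is a distinct monomial in the polynomial algebra of the target, so the $\{Q^Ix_{n+1}\}_I$ are linearly independent. This forces all $c_I=0$ and $\xi=D$, proving (i).

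Part (ii) follows by the identical argument, with $x_n$ replaced by a homogeneous basis element $x_\alpha\in\widetilde H_*X$, $x_{n+1}$ by $\Sigma x_\alpha$, the excess analysis carried out relative to $\dim x_\alpha$ instead of $n$, and no length constraint in play. The main delicate point, and the only step requiring real care, is the injectivity of $I\mapsto Q^Ix_{n+1}$ at the boundary $\ex(I)=n+1$: one must confirm that the iterated square-root recursion terminates on a unique admissible tail $J$ with $\ex(J)>n+1$. This is guaranteed because admissibility of $I$ forces $\ex(I')\geqslant\ex(I)$ along the tails of $I$, so the recursion eventually stabilizes on a polynomial generator of the target, and $(J,k)$ is clearly recoverable from $I$.
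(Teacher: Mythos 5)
Your argument is correct and takes essentially the same route as the paper, which disposes of (i) by citing \cite[Lemma 3.4]{Zare-Els-1} and of (ii) by writing $\xi=\sum Q^Ix_\alpha+D$ and asserting $\sigma_*\xi=\sum Q^I\Sigma x_\alpha\neq 0$ — an assertion that implicitly relies on the linear independence of the suspended classes which you verify explicitly. Your careful handling of the boundary case $\ex(I)=\dim\Sigma x_\alpha$, where $Q^I\Sigma x_\alpha$ is an iterated square of a polynomial generator of the target rather than a generator itself (so that distinct $I$ still yield distinct monomials), is precisely the detail the paper's one-line justification leaves unstated.
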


\begin{proof}
Case (i) is \cite[Lemma 3.4]{Zare-Els-1}. The statement of (ii) follows analogous to part (i) as in \cite{Zare-Els-1}. Assume $\xi=\sum Q^Ix_\alpha +D$ where $D$ is a sum of decomposable elements. If the first term is a sum of terms with $\ex(I)>\dim x_\alpha$ then $\sigma_*\xi=\sum Q^I\Sigma x_\alpha\neq 0$. This proves our claim.
\end{proof}

\subsection{Spherical classes}
A  class $\xi\in\widetilde{H}_*X$ is called spherical if it is in the image of the Hurewicz homomorphism $h:\pi_*X\to H_*X$. A spherical class has some basic properties: (1) it is primitive in the coalgebra $\widetilde{H}_*X$ {where the coalgebra structure is induced by the diagonal map $X\to X\times X$}, (2) $Sq^i_*\xi=0$ for all $i>0$ where $Sq^i_*:\widetilde{H}_*X\to \widetilde{H}_{*-i}X$ is the operation induced by $Sq^i:\widetilde{H}^*X\to \widetilde{H}^{*+i}X$ by the vector space duality \cite[Lemma 6.2]{AsadiEccles}; for simplicity, we say $\xi$ is $A$-annihilated if $Sq^t_*\xi=0$ for all $t>0$. There is a spherical class $\xi_{-1}\in H_{*-1}\Omega X$, not necessarily unique, so that $\sigma_*\xi_{-1}=\xi$.\\

For a spectrum $E$, we say $\xi\in H_*\Omega^\infty E$ is stably spherical if $\sigma_*^\infty\xi\neq 0$. Writing $h^s:\pi_*E\to H_*E$ for the stable Hurewicz map, for $f\in \pi_i\Omega^\infty E$, $h(f)$ is stable spherical if and only if the stable adjoint of $f$, say $f^s:S^i\not\to E$ maps nontrivially under $h^s$. In particular, if $\xi\in H_{i}QS^n$ is stably spherical then $\sigma_*^\infty\xi\neq 0$ in $\widetilde{H}_iS^n$ which shows that $i=n$. Similarly, if $\xi\in H_i\Omega^lS^{n+l}$ is spherical with $\sigma_*^l\xi\neq 0$ then $i=n$. The following now is immediate.
\begin{prp}\label{stablyspherical-1}
Suppose $0<l\leqslant +\infty$. Suppose $f\in\pi_i\Omega^l\Sigma^lS^{n}$ so that $\sigma_*^lh(f)\neq 0$ where $h:\pi_i\Omega^l\Sigma^lS^{n}\to H_*(\Omega^l\Sigma^lS^n;\Z)$ is the integral Hurewicz homomorphism. Then $i=n$ and the adjoint of $f$ as an element of $\pi_{i+n}S^{i+n}$ is detected by its homological degree. Similarly, at the prime $2$, if $f\in{_2\pi_i}\Omega^l\Sigma^lS^{n}$ is given with $\sigma_*^lh(f)\neq 0$ then $i=n$ and the adjoint of $f$ as an element of ${_2\pi_{i+n}}S^{i+n}$ is detected by mod $2$ degree.
\end{prp}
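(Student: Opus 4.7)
The plan is to combine naturality of the Hurewicz homomorphism with the adjunction $\Omega^l\Sigma^l S^n\leftrightarrow S^{n+l}$ and then exploit the fact that the reduced homology of a sphere is concentrated in a single degree. Nothing in the argument goes beyond unpacking the definition of $\sigma_*^l$ recalled in Section 2.1, so I expect the proof to be quite short.

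First, I would verify that for any $f:S^i\to\Omega^l\Sigma^l S^n$ with $l$-fold adjoint $f^l:S^{i+l}\to S^{n+l}$, one has $h(f^l)=\sigma_*^l h(f)$ in $\widetilde{H}_{i+l}(S^{n+l};\Z)$. This is immediate from the factorisation $f^l=e^l\circ\Sigma^l f$ of the adjoint through the iterated evaluation map $e^l:\Sigma^l\Omega^l\Sigma^l S^n\to S^{n+l}$, combined with naturality of singular homology and of the suspension isomorphism; these are precisely the ingredients packaged into the definition of $\sigma_*^l$. The analogous statement for $l=+\infty$ holds with the stable Hurewicz $h^s$ and $\sigma_*^\infty$ in place of their finite counterparts, using the stable evaluation $\Sigma^\infty\Omega^\infty\Sigma^\infty S^n\to\Sigma^\infty S^n$.

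Granted this identity, the hypothesis $\sigma_*^l h(f)\neq 0$ forces $h(f^l)\neq 0$, and since $\widetilde{H}_*(S^{n+l};\Z)$ is $\Z$ concentrated in degree $n+l$, we obtain $i+l=n+l$, whence $i=n$. In that top dimension the integral Hurewicz map $\pi_{n+l}(S^{n+l})\to\widetilde{H}_{n+l}(S^{n+l};\Z)\simeq\Z$ coincides with the degree homomorphism, so nonvanishing of $h(f^l)$ exhibits $f^l$ as being detected by its integral degree. The mod $2$ statement is then obtained by running the identical argument with $\Z/2$-coefficients, replacing $\Z$ by $\Z/2$ and the integral degree by the mod $2$ degree.

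There is essentially no substantive obstacle here; the only step requiring any care is the identification $h(f^l)=\sigma_*^l h(f)$, and even that is built into how $\sigma_*^l$ was defined. The argument is really a piece of bookkeeping, but it is worth recording explicitly because it is the statement that reduces the problem of classifying spherical classes in $H_*\Omega^l\Sigma^l S^n$ to those classes which are killed by $\sigma_*^l$, thereby setting up the reduction theorem that follows.
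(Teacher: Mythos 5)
Your argument is correct and is essentially the paper's own: the paper treats the proposition as immediate from the compatibility $h(f^l)=\sigma_*^l h(f)$ (stated just before the proposition via the stable adjoint and the evaluation map) together with $\widetilde{H}_*(S^{n+l})$ being concentrated in degree $n+l$, which is exactly your bookkeeping. The only discrepancy is notational: the adjoint lives in $\pi_{i+l}S^{n+l}$ (so $\pi_{n+l}S^{n+l}$ once $i=n$), and the paper's ``$\pi_{i+n}S^{i+n}$'' appears to be a typo that your write-up silently corrects.
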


\subsection{The Reduction theorem: Proof of Theorem \ref{TheReductionTheorem}}
This observation is probably known to experts in the field, rather as a triviality, but we don't know of any published account. We record this as besides its trivial looking face, it has immediate application in eliminating many classes from being spherical. For instance, note that a spherical class is primitive which if decomposable then by \cite[Proposition 4.23]{MM} it must be square of a primitive class, and possibly higher powers of $2$ of some primitive class. The reduction theorem eliminates higher powers of $2$. Let is recall the following observations.

\begin{lmm}
(\cite[Lemma 3.6]{Zare-Els-1}) If $\xi\in H_m\Omega^lS^{n+l}$, $0<l\leqslant +\infty$ and $n>0$, is a decomposable spherical class, then $\xi=\zeta^2$ for some odd dimensional primitive class $\zeta$.
\end{lmm}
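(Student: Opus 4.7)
My plan combines Milnor--Moore's structure theorem with the Wellington--Cartan formula for $Sq^i_*$ on squares and an obstruction-theoretic argument analogous to the proof of Lemma \ref{main1}.

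A spherical class $\xi$ is primitive (as the Hurewicz image of a class on $S^m$, since the Hurewicz map preserves coproducts and $H_*S^m$ is primitive) and $A$-annihilated, as recalled in Section 2.3. Since $H_*\Omega^l S^{n+l}$ is a commutative connected Hopf algebra over $\mathbb{F}_2$ which is also a polynomial algebra (Section 2.2), Milnor--Moore's Proposition 4.23 applied to the primitive decomposable class $\xi$ yields $\xi = \zeta^2$ for some primitive $\zeta$; the square root is unique because the polynomial algebra is an integral domain.

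To show $|\zeta|$ is odd, I would assume for contradiction $|\zeta| = 2m$. The Wellington--Cartan formula $Sq^{2t}_*(\zeta^2) = (Sq^t_*\zeta)^2$ combined with the $A$-annihilation of $\xi$ and integrality of the algebra forces $Sq^t_*\zeta = 0$ for all $t > 0$, so $\zeta$ is itself primitive and $A$-annihilated. Primitive $A$-annihilated decomposable classes with even-dimensional square root do exist in $H_*\Omega^l S^{n+l}$ in general (for instance $(Q^3 x_1)^2$ in $H_8 QS^1$, where $Q^3 x_1$ is primitive, indecomposable, even-dimensional and $A$-annihilated by direct Nishida computation), so the remaining contradiction cannot come from $A$-annihilation alone and must use the full sphericality of $\xi$. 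I would then invoke an Asadi--Eccles type obstruction (Proposition 5.8 of \cite{AsadiEccles}, as used in the proof of Lemma \ref{main1} above): sphericality of $\xi = \zeta^2$ forces the stable adjoint of a representing map $f:S^{2m}\to\Omega^l S^{n+l}$ to be detected by $Sq^{|\zeta|+1}$ on a $(|\zeta|+1)$-dimensional class in its mapping cone, and an Adem-relation decomposition of this operation for even $|\zeta|$, combined with the sparse cell structure of the mapping cone of a map into $S^{n+l}$, produces the required vanishing incompatibility.

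The main obstacle is carrying out this homotopy-theoretic obstruction precisely in the finite loop space setting $\Omega^l S^{n+l}$: one must track the mapping cone cohomology and the action of the Steenrod algebra on it carefully, and combine this with the algebraic structure of $\zeta$ in the Milnor--Moore basis of primitives (as a sum of terms $(Q_J x_n)^{2^k}$) to derive the contradiction from even parity. The algebraic reductions (primitivity, $A$-annihilation, Milnor--Moore) are essentially formal, but distinguishing the even-dimensional case from the odd-dimensional case requires the sphericality input and not merely the existence of a primitive $A$-annihilated square root.
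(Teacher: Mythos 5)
Your first half is fine and matches what the paper itself says in Section 2.4: a spherical class is primitive and $A$-annihilated, and since $H_*\Omega^lS^{n+l}$ is polynomial, Milnor--Moore's Proposition 4.23 together with injectivity of the squaring map gives $\xi=\zeta^2$ with $\zeta$ primitive. But note that the paper does not prove this lemma here at all --- it is quoted from \cite[Lemma 3.6]{Zare-Els-1}, and the surrounding text explicitly records that the published proof of the $n=1$ case contained a gap that needed a separate theorem (\cite[Theorem 1.8]{Zare-filteredfiniteness}) to repair. That warning is aimed precisely at the step you have left unexecuted.

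The genuine gap is the odd-dimensionality of $\zeta$. You correctly observe (with the apt example $(Q^3x_1)^2\in H_8QS^1$) that primitivity plus $A$-annihilation cannot decide this, but the obstruction argument you then sketch is deferred rather than carried out, and as described it would not go through literally. First, \cite[Proposition 5.8]{AsadiEccles} applies to $f:S^{2d}\to QX$ with $h(f)=v^2$ for $v$ in the bottom filtration $H_*X$; to reach that situation from $\zeta=\sum_{J,k}(Q_Jx_n)^{2^k}$ one must compose with a looped James--Hopf map onto a Snaith summand as in Theorem \ref{detect1}, and one must do so without invoking \cite[Theorem 2.4]{Zare-Els-1}, whose description of $h(f)$ already builds in the odd-dimensionality you are trying to prove; the bookkeeping also changes if $\zeta$ contains honest $2^k$-th powers with $k\geqslant 1$, which Milnor--Moore does not exclude. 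Second, the relevant mapping cone is that of a stable map into $D_{2^{l_0}}(S^{n+1},l-1)$, not ``a map into $S^{n+l}$'', and the vanishing of $Sq^{2e+1}=Sq^1Sq^{2e}$ on the $(2e+1)$-dimensional class $u$ (where $2e=\dim\zeta$) does not come from sparseness of cells: $Sq^{2e}u$ lands in $H^{4e+1}C_g\simeq H^{4e+1}D_{2^{l_0}}(S^{n+1},l-1)$, where instability in the cone does not force it to vanish. What one actually needs is that this summand is an $(n+1)$-fold suspension (Lemma \ref{desuspend}), so that $u=\Sigma^{n+1}u'$ with $\dim u'=2e-n<2e$ and $Sq^{2e}u'=0$ by instability in the desuspension. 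Until that chain is written down and the circularity and higher-power issues are addressed, the even-dimensional case --- which is the entire content of the lemma beyond Milnor--Moore --- remains open in your argument.
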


The case of $n=1$ of the above theorem contains a gap in proof which later on was fixed in the following.

\begin{thm}
(\cite[Theorem 1.8]{Zare-filteredfiniteness}) (i) Suppose $\zeta^2\in H_*QS^1$ is a spherical class, then $\zeta$ is odd dimensional.\\
(ii) Suppose $X$ is an arbitrary space of finite type and $\zeta^2\in H_*Q\Sigma^2X$ is spherical, then $\zeta$ is odd dimensional.
\end{thm}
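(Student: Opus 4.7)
The plan is to derive both parts as direct applications of the Reduction Theorem (Theorem \ref{TheReductionTheorem}), the key being that the conclusion of that theorem forces the case $j=0$ under our hypotheses, whereupon the parity of $\dim\zeta$ can be read off immediately.

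Assume $h(f) = \zeta^2$ for some $f: S^d \to QS^1$ (respectively $f: S^d \to Q\Sigma^2 X$). Since $\zeta^2$ is decomposable, Lemma \ref{kernelofsuspension} gives $\sigma_*^\infty h(f) = 0$, which verifies the hypotheses of Theorem \ref{TheReductionTheorem}(i) (with $l = +\infty$, $n = 1$), respectively Theorem \ref{TheReductionTheorem}(ii). The theorem yields some $j < +\infty$ and an adjoint $f^j$ with $h(f^j) = (\sum_\alpha Q^{I_\alpha}x_{1+j})^2$ (respectively $(\sum_\alpha Q^{I_\alpha}\Sigma\zeta_\beta)^2$), the inner sum being odd-dimensional.

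The decisive observation is that, under the natural interpretation of the adjoint, $h(f^j) = \sigma_*^j h(f) = \sigma_*^j(\zeta^2)$, and for any $j \geq 1$ this vanishes by Lemma \ref{kernelofsuspension} since $\zeta^2$ is decomposable; but the theorem's conclusion requires $h(f^j) \neq 0$, so we are forced to $j = 0$. Hence
$$\zeta^2 = \Big(\sum_\alpha Q^{I_\alpha}x_1\Big)^2 \quad \text{respectively} \quad \zeta^2 = \Big(\sum_\alpha Q^{I_\alpha}\Sigma\zeta_\beta\Big)^2.$$
Because $H_*QS^1$ and $H_*Q\Sigma^2 X$ are polynomial algebras, hence integral domains, taking square roots yields $\zeta = \sum_\alpha Q^{I_\alpha}x_1$ (respectively $\sum_\alpha Q^{I_\alpha}\Sigma\zeta_\beta$), which is odd-dimensional by construction.

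The main obstacle I foresee is rigorously verifying the identification $h(f^j) = \sigma_*^j h(f)$ for the precise adjoint used in Theorem \ref{TheReductionTheorem}, as this is what enforces $j=0$. A backup route, should $\zeta$ itself be a nontrivial $2^k$-th power so that the normal form above does not directly apply, is to iteratively extract square roots: primitivity of $\zeta$ follows from primitivity of $\zeta^2$ via $(\Delta\zeta - \zeta\otimes 1 - 1\otimes\zeta)^2 = 0$ and reducedness, while $A$-annihilation descends by the Cartan formula $Sq^{2t}_*(\zeta^2) = (Sq^t_*\zeta)^2$ and the automatic vanishing $Sq^{2t+1}_*(\zeta^2) = 0$ in characteristic $2$. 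Iterating, we reduce to the case where $\zeta$ is not a nontrivial $2^k$-th power, at which point the $j=0$ branch of the Reduction Theorem closes the argument exactly as above. This fallback also fills the gap in \cite[Lemma 3.6]{Zare-Els-1} at $n=1$.
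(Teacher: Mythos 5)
Your proposal is circular within the logical architecture of this paper. The statement you are asked to prove is quoted from an external reference precisely because it is an \emph{input} to the Reduction Theorem: in the proof of Theorem \ref{TheReductionTheorem} given here, the step ``applying the above theorems to the decomposable primitive class $\sigma_*^jh(f)$ proves the claim that $\sigma_*^jh(f)=\zeta^2$ for some odd dimensional primitive class $\zeta$'' invokes exactly this theorem (together with \cite[Lemma 3.6]{Zare-Els-1}, whose $n=1$ case is the gap this theorem exists to fill). The odd-dimensionality of $\sum Q^Ix_{n+j}$ in the conclusion of Theorem \ref{TheReductionTheorem} is imported wholesale from the statement you are trying to prove, so deriving that statement back out of the Reduction Theorem establishes nothing. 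The paper itself offers no proof to compare against --- it is a citation to \cite[Theorem 1.8]{Zare-filteredfiniteness} --- but a blind proof must supply the missing content, namely an independent argument ruling out even-dimensional $\zeta$, and yours does not.

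Your fallback route has the same defect and an additional one. The square-root extraction steps are individually correct: primitivity of $\zeta$ does follow from $(\Delta\zeta-\zeta\otimes1-1\otimes\zeta)^2=0$ and reducedness of the polynomial algebra, and $A$-annihilation does descend via $Sq^{2t}_*\zeta^2=(Sq^t_*\zeta)^2$ and the automatic vanishing of odd $Sq^r_*$ on squares. But the reduction terminates by again appealing to ``the $j=0$ branch of the Reduction Theorem,'' which is the same circularity; and worse, after extracting a square root $\zeta$ is only known to be primitive and $A$-annihilated, not spherical, so there is no map $f$ with $h(f)=\zeta$ and the Reduction Theorem (a statement about Hurewicz images) does not even apply to it. What is actually needed --- and what the cited external proof supplies, reportedly via the multiplicative monomorphism $H_*\Omega^lS^{l+1}\to H_*QS^1$ induced by stabilisation together with a geometric detection argument in the style of \cite[Proposition 5.8]{AsadiEccles} --- is a direct contradiction from the assumption that $\zeta$ is even-dimensional. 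That core step is absent from your argument.
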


The fix comes from the fact that the stabilisation map $\Omega^lS^{l+1}\to QS^1$ induces a multiplicative monomorphism in homology and an application of the above theorem. The proof of the reduction theorem is now obvious, but we outline a proof.

\begin{proof}[Proof of Theorem \ref{TheReductionTheorem}]
Suppose $f:S^n\to Y$, $Y=Q\Sigma^2X$ or $Y=\Omega^lS^{n+l}$ with $l$ and $n$ chosen as above, is given with $\sigma^\infty_*h(f)=0$ and $h(f)\neq 0$. Choose $j$ to be the largest integer such that $\sigma_*^jh(f)\neq 0$, then $\sigma_*^{j+1}h(f)=\sigma_*(\sigma_*^jh(f))=0$. By Lemma \ref{kernelofsuspension}, $\sigma_*^jh(f)$ is a decomposable element. On the other hand, for $f^j:S^{n+j}\to B^jY$ being the adjoint of $f$, we have $h(f^j)=\sigma_*^jh(f)$ which means $\sigma_*^jh(f)$ is a spherical class, hence a primitive. Here, $B$ is the classifying space functor applied to the loop space $Y$ which satisfies $\Omega^jB^jY=Y$. Now, applying the above theorems to the decomposable primitive class $\sigma_*^jh(f)$ proves the claim that $\sigma_*^jh(f)=\zeta^2$ for some odd dimensional primitive class $\zeta$. Moreover, in the above choices for $Y$, the adojoint map $g:S^{n-1}\to \Omega Y$ satisfies 
$$h(g)=\sum Q^I\Sigma^{-1}\zeta_\alpha+D$$
where $D$ is a some of decomposable classes, and $\Sigma^{-1}\zeta_\alpha$ is a formal notations for the generators of $H_*\Omega Y$ where for $Y=Q\Sigma^2X$ or $Y=\Omega^lS^{n+l}$, $\Omega Y=Q\Sigma X$ or $\Omega Y=\Omega^{l+1}S^{n+l}$ justifies our notation. The equation $h(f)=\sigma_*h(g)$ yields the claimed expression for $h(f)$ as well as $h(f^j)$.
\end{proof}


\subsection{Snaith splitting and James-Hopf maps}
Recall that by James splitting \cite{James-reducedproduct} for a path connected space $X$ we have a splitting $\Sigma\Omega\Sigma X\simeq\bigvee_{r=1}^{+\infty}\Sigma X^{\wedge r}$. By projection on the $r$-th summand and taking adjoint we have a map $H_r:\Omega\Sigma X\to \Omega\Sigma X^r$ which we call James-Hopf invariant; we write $H$ to denote $H_2$. Moreover, this map can be composed with the stabilisation map to give a map $\Omega \Sigma X\to Q X^{\wedge r}$. This generalises as follows to yield stable maps. Recall that, for $k\geqslant 1$, when $X$ is path connected, we have Snaith splitting \cite{Snaith}
$$\Sigma^\infty \Omega^k\Sigma^kX\simeq\bigvee_{r=1}^{+\infty}\Sigma^\infty D_r(X,k)$$
{where $D_r(X,k)=F(\R^k,r)\ltimes_{\Sigma_r}X^{\wedge r}$, writing $D_rX=D_r(X,+\infty)$, where $F(\R^k,r)$ is the configuration space of $r$ distinct points in $\R^k$. Here, $F(\R^k,r)$ does not have a base point, hence we have a the description with half-smash; if we are to add a disjoint basepoint then the space $D_r(X,k)$ also can be identified with $F(\R^k,r)_+\wedge_{\Sigma_r}X^{\wedge r}$.} In particular, $D_2(S^n,k)=\Sigma^nP_{n}^{n+k-1}$, $D_2S^n\simeq\Sigma^n P_n$ \cite[Corollary 1.4]{Kuhngeometry}. Here, $P^n$ denotes the $n$-dimensional projective space, $P$ is the infinite dimensional real projective space, and $P_n=P/P^{n-1}$. By projection on to the $r$-th summand, and taking stable adjoint, we have a map
$$j_r^k:\Omega^k\Sigma^kX\to QD_r(X,k)$$
which we refer to as the $r$-th James-Hopf map for $\Omega^k\Sigma^kX$, and for $k=+\infty$ we write $j_r:QX\to D_rX$ for this map and call it stable James-Hopf maps. The maps $j_{r}^{k+d}$ and $\Omega^kj_r^k$ are compatible by through the suspension $\Omega^k\Sigma^k X\to \Omega^{k+d}\Sigma^{k+d}X$ where $d>0$ fitting into some obvious commutative diagrams \cite[Proposition 1.1, Theorem 1.2]{Kuhngeometry} (see also \cite{Milgram-unstable}).\\

\textbf{Desuspending $D_r(S^n,k)$.} For technical reasons which are essential to one of our main tools, namely Theorem \ref{detect1}, we are interested in the cases where $D_r(X,k)$ admits a desuspension, particularly when $X=S^{n}$ with $n>0$. There are various ways to see this. For instance, it is known that for $0<r<+\infty$, there is a $\Sigma_r$-equivariant diffeomorphism $F(\R^k,r)\to\R^k\times F(\R^k-\{0\},r-1)$ where $\Sigma_r$ acts trivially on $\R^k$
\cite[Lemma 5.7]{CMT-CX}. After one point compactification, the above homeomorphism induces a $\Sigma_r$-equivariant homeomorphism $F(\R^k,r)_+\to S^k\wedge F(\R^k-\{0\},r-1)_+$ where $\Sigma_r$ acts trivially on the $S^k$ factor. This allows to see that $D_r(X,k)$ admits at least one desuspension such that if $X$ is path connected CW-complex, then $D_r(X,k)=\Sigma(\Sigma^{-1}D_r(X,k))$ for some path connected CW-complex $\Sigma^{-1}D_r(X,k)$. In a similar vein, notice that there is a $\Sigma_r$-equivariant homeomorphism $(S^n)^{\wedge r}\to S^n\wedge (S^{r-1})^{\wedge n}$ where on the left $\Sigma_r$ acts by permutation and one the right it acts trivially on the $S^n$ factor while acting through reduced regular representation on the $S^{r-1}$ and diagonally on $(S^{r-1})^{\wedge n}$ factor. Consequently, $D_r(S^n,k)$ is homeomorphic to $\Sigma^n(F(\R^k,r)_+\wedge_{\Sigma_r}(S^{r-1})^{\wedge n})$ which is a path connected space either if $r>0$ or $n>2$ (in our applications $r>0$ is the case). The above homeomorphism also shows that if $X=\Sigma Y$, then $D_r(X,k)$ is a suspension. We also note that the space $X=S^n$, $D_r(X,k)=D_r(S^n,k)=F(\R^k,r)_+\wedge_{\Sigma_r}(S^n)^{\wedge r}$ is the Thom complex of the $n$-fold Whitney sum of the bundle
$$\xi_{k,r}:F(\R^k,r)\wedge_{\Sigma_r}\R^r\to F(\R^k,r)$$
Using this fact, it is possible to determine the precise number of desuspensions that $D_r(X,k)$ admits \cite{CCKN}. However, for the purpose of the present paper, having one suspension is enough.
For the purpose of future reference, we record this as a lemma.

\begin{lmm}\label{desuspend}
If $r>0$ or $n>2$, there exists a homeomorphism $D_r(S^n,k)$ to suspension of a path connected space $\Sigma^{-1}D_r(S^n,k)$.
\end{lmm}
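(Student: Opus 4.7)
The plan is to extract the suspension structure directly from the $\Sigma_r$-equivariant homeomorphism $(S^n)^{\wedge r} \cong S^n \wedge (S^{r-1})^{\wedge n}$ recorded in the preceding discussion, where $\Sigma_r$ acts trivially on the $S^n$ factor and diagonally through the reduced regular representation on $(S^{r-1})^{\wedge n}$. First I would smash equivariantly with $F(\R^k,r)_+$ and take $\Sigma_r$-orbits; since the $S^n$ factor carries the trivial action, it slides outside the orbit construction to yield
$$D_r(S^n,k) \;\cong\; S^n \wedge \bigl(F(\R^k,r)_+ \wedge_{\Sigma_r} (S^{r-1})^{\wedge n}\bigr).$$
Because $n\geqslant 1$, this already manifests $D_r(S^n,k)$ as at least a single suspension, so I would define
$$\Sigma^{-1}D_r(S^n,k) \;:=\; \Sigma^{n-1}\bigl(F(\R^k,r)_+ \wedge_{\Sigma_r} (S^{r-1})^{\wedge n}\bigr).$$

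Next I would verify path connectedness of this candidate by a short case split. If $n\geqslant 2$, then $\Sigma^{n-1}$ of any based space is automatically path connected and there is nothing further to check. If $n=1$, then the hypothesis on $(n,r)$ forces $r\geqslant 2$, and the candidate reduces to $F(\R^k,r)_+\wedge_{\Sigma_r}S^{r-1}$; smashing with the path connected space $S^{r-1}$ gives a path connected space, a property preserved by the $\Sigma_r$-orbit construction, so the conclusion follows.

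I do not expect a genuine obstacle here: the argument is essentially a reorganisation of equivariant smash factors already identified in the discussion preceding the lemma, and the only real subtlety is the small case analysis needed to guarantee that the remainder after stripping off one suspension stays connected, rather than merely nonempty. As an independent sanity check, one could instead invoke the $\Sigma_r$-equivariant homeomorphism $F(\R^k,r)_+ \cong S^k\wedge F(\R^k-\{0\},r-1)_+$ to pull out a $\Sigma^k$ factor from the configuration space side; since $k\geqslant 1$, this yields the same conclusion by a parallel route and confirms that the result does not depend on the particular decomposition used to identify the suspension.
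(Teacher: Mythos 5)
Your proof is correct and follows essentially the same route as the paper, which likewise extracts the desuspension from the $\Sigma_r$-equivariant homeomorphism $(S^n)^{\wedge r}\cong S^n\wedge(S^{r-1})^{\wedge n}$ to write $D_r(S^n,k)\cong\Sigma^n\bigl(F(\R^k,r)_+\wedge_{\Sigma_r}(S^{r-1})^{\wedge n}\bigr)$, and also records the configuration-space splitting $F(\R^k,r)_+\cong S^k\wedge F(\R^k-\{0\},r-1)_+$ as the alternative you mention. One small caveat: the lemma's literal hypothesis ``$r>0$ or $n>2$'' does not force $r\geqslant 2$ when $n=1$ (and indeed $D_1(S^1,k)=S^1=\Sigma S^0$ with $S^0$ not path connected), so your case analysis silently strengthens the hypothesis to $r\geqslant 2$ --- which is what the statement actually requires and what holds in the paper's applications, where $r=2^{l_0}\geqslant 2$.
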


The above desuspension, allows us to write $\Sigma^{-1}\xi$ for an element of $H_*\Sigma^{-1}D_r(X,k)$ which maps isomorphically to $\xi\in H_*D_r(S^n,k)$, bearing in mind that $\Sigma^{-1}D_r(S^n,k)$ is a space.\\

The homology of James-Hopf maps is well understood \cite{Kuhnhomology}. We are interested in the case of $X=S^n$ for $n>0$. {For $X$ a path connected space, t}here is a filtration on $H_*\Omega^k\Sigma^kX$, called the height filtration $\mathrm{ht}:H_*\Omega^k\Sigma^kX\to\mathbb{N}$, determined by
$$\mathrm{ht}(\xi\eta)=\mathrm{ht}(\xi)+\mathrm{ht}(\eta),\ \mathrm{ht}(Q^i\xi)=\mathrm{ht}(Q_j\xi)=2\mathrm{ht}(\xi).$$
The stable projection $\Sigma^\infty\Omega^k\Sigma^kX\to \Sigma^\infty D_r(X,r)$ induces projection on the elements of height $r$ in homology. {In the space case of $X=S^n$, the set of elements of $H_*\Omega^kS^{n+k}$ that are of height $r$ forms an additive basis for $\widetilde{H}_*D_r(S^n,k)$}. The homology of $j_r^k$ is compatible with this (see \cite{CLM} for example or \cite{BE3} for this), and in particular is determined by killing element of height lower than $r$ and mapping elements of height $r$ identically.


\subsection{Proof of Theorem \ref{detect1}}
Recall that by Boardman and Steer detecting a map $S^{2n+1}\to S^{n+1}$ by $Sq^{n+1}$ in its mapping cone, corresponds to detecting the adjoint mapping $S^{2n}\to\Omega S^{n+1}$ by $H:\Omega S^{n+1}\to \Omega S^{2n+1}$ \cite{BoardmanSteer} and the latter is equivalent to detecting the adjoint map $\widetilde{f}:S^{2n}\to\Omega S^{n+1}$ in homology by $h(\widetilde{f})=x_n^2$ \cite[Proposition 6.1.5]{Harper}. A similar and more general statement holds on maps $f:S^{2n}\to QX$ saying that if $h(f)=x_n^2$ with $x_n\in\widetilde{H}_nX$ then the stable adjoint of $f$, $S^n\to X$ is detected by $Sq^{n+1}x_n=x_{2n+1}$ in its stable mapping cone \cite[Proposition 5.8]{AsadiEccles}.

Now, we are able to prove Theorem \ref{detect1}. We shall use the notation fixed above, provided by the desuspension of Lemma \ref{desuspend}.

\begin{thm}
Suppose $f:S^{2d}\to\Omega^lS^{n+l}$ is given with $h(f)=(\sum Q_Jx_n)^2$. Then, the composition
$$(\Omega j_{2^{l_0}}^{l-1})\circ f:S^{2d}\to\Omega^lS^{n+l}\to\Omega QD_{2^{l_0}}(S^{n+1},l-1)$$
satisfies
$$h(\Omega j_{2^{l_0}}^{l-1}f)=(\sum_{l(J)=l_0} \Sigma^{-1}\overline{Q_Jx_{n+1}})^2$$
and its stable adjoint, say $g:S^{2d+1}\not\to D_{2^{l_0}}(S^{n+1},l-1)$, is detected by $Sq^{d+1}$ on a $(d+1)$-dimensional class in its stable mapping cone. Here, $l_0=\max\{l(J)\}$.
\end{thm}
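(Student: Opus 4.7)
The plan is to compute the Hurewicz image $h(\Omega j_{2^{l_0}}^{l-1}\circ f)$ and then invoke \cite[Proposition 5.8]{AsadiEccles} for the $Sq^{d+1}$ detection. By naturality of the Hurewicz homomorphism, $h(\Omega j_{2^{l_0}}^{l-1}\circ f)=(\Omega j_{2^{l_0}}^{l-1})_*h(f)$, and since $\Omega j_{2^{l_0}}^{l-1}$ is a loop map, the induced map on $\Z/2$-homology is multiplicative with respect to the Pontrjagin product. Working in characteristic two, the Frobenius gives
$$h(\Omega j_{2^{l_0}}^{l-1}\circ f)=\bigl(\sum_J(\Omega j_{2^{l_0}}^{l-1})_*Q_Jx_n\bigr)^2,$$
so the computation reduces to identifying $(\Omega j_{2^{l_0}}^{l-1})_*Q_Jx_n$ for each Kudo-Araki generator $Q_Jx_n$ appearing in $h(f)$.

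To do this, I would use the identity $\sigma_*\circ(\Omega g)_*=g_*\circ\sigma_*$ with $g=j_{2^{l_0}}^{l-1}$, together with $\sigma_*Q_Jx_n=Q_Jx_{n+1}$, to obtain
$$\sigma_*\bigl((\Omega j_{2^{l_0}}^{l-1})_*Q_Jx_n\bigr)=(j_{2^{l_0}}^{l-1})_*Q_Jx_{n+1}.$$
By the homology description of the James-Hopf map as projection onto the height-$r$ part of the Snaith splitting (recalled in the James-Hopf discussion above), the right side equals $\overline{Q_Jx_{n+1}}\in\widetilde{H}_*D_{2^{l_0}}(S^{n+1},l-1)$ when $l(J)=l_0$ and vanishes when $l(J)<l_0$. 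Invoking Lemma \ref{desuspend} to identify $\Omega QD_{2^{l_0}}(S^{n+1},l-1)\simeq Q\Sigma^{-1}D_{2^{l_0}}(S^{n+1},l-1)$ on the zero component (so that $\Sigma^{-1}\overline{Q_Jx_{n+1}}$ appears as a bottom-cell generator there that suspends to $\overline{Q_Jx_{n+1}}$), and combining with Lemma \ref{kernelofsuspension} (which identifies $\ker\sigma_*$ with the ideal of decomposables), we conclude
$$(\Omega j_{2^{l_0}}^{l-1})_*Q_Jx_n=\begin{cases}\Sigma^{-1}\overline{Q_Jx_{n+1}}+D_J&\text{if }l(J)=l_0,\\D_J&\text{if }l(J)<l_0,\end{cases}$$
for some decomposable correction terms $D_J$.

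Substituting back and applying the Frobenius once more, we obtain $h(\Omega j_{2^{l_0}}^{l-1}\circ f)=\bigl(\sum_{l(J)=l_0}\Sigma^{-1}\overline{Q_Jx_{n+1}}\bigr)^2+\bigl(\sum_J D_J\bigr)^2$, and the main obstacle is to show that the residual $\bigl(\sum_J D_J\bigr)^2$ vanishes. Since $H_*Q\Sigma^{-1}D_{2^{l_0}}(S^{n+1},l-1)$ is a polynomial ring on Dyer-Lashof generators, this reduces to showing $\sum_J D_J=0$, a $d$-dimensional decomposable. The argument I envisage uses that $h(\Omega j_{2^{l_0}}^{l-1}\circ f)$ is spherical and hence primitive; by the Milnor-Moore theorem it must be a $2^k$-th power of an odd-dimensional primitive, and a dimension and parity analysis of the Dyer-Lashof generators in the target then forces the correction to vanish. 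With this formula established, setting $\zeta=\sum_{l(J)=l_0}\Sigma^{-1}\overline{Q_Jx_{n+1}}\in H_d\Sigma^{-1}D_{2^{l_0}}(S^{n+1},l-1)$ and applying \cite[Proposition 5.8]{AsadiEccles} to the unstable map $S^{2d}\to Q\Sigma^{-1}D_{2^{l_0}}(S^{n+1},l-1)$ with Hurewicz image $\zeta^2$, then suspending once, produces the stable map $g:S^{2d+1}\not\to D_{2^{l_0}}(S^{n+1},l-1)$ detected by $Sq^{d+1}$ on the $(d+1)$-dimensional class $\Sigma\zeta$ in its stable mapping cone.
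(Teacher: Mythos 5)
Your route is the same as the paper's: naturality of the Hurewicz map plus the fact that $\Omega j_{2^{l_0}}^{l-1}$ is a loop map (hence a ring map in homology) to pull the square outside, computation of $(\Omega j_{2^{l_0}}^{l-1})_*$ on the generators by passing through the homology suspension and the height--filtration description of $(j_{2^{l_0}}^{l-1})_*$, and finally \cite[Proposition 5.8]{AsadiEccles}. The one place you go beyond the paper is in flagging the decomposable indeterminacy: indeed $\sigma_*\circ(\Omega j)_*=j_*\circ\sigma_*$ determines $(\Omega j)_*Q_Jx_n$ only modulo $\ker\sigma_*$, and $(j_r)_*$ itself agrees with the stable height-$r$ projection only modulo decomposables of $H_*QD_r(S^{n+1},l-1)$, so the correction terms $D_J$ are genuinely possible; the paper suppresses them without comment.

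Your proposed disposal of $\sum_JD_J$ is, however, not sound as stated. Since squaring is injective on the polynomial algebra $H_*Q\Sigma^{-1}D_{2^{l_0}}(S^{n+1},l-1)$, primitivity of $(\zeta+\sum_JD_J)^2$ is equivalent to primitivity of $\zeta+\sum_JD_J$, and Milnor--Moore gives nothing further: the primitive lying over a prescribed sum of odd-degree polynomial generators exists, is unique, and in general has a \emph{nonzero} decomposable tail, so no dimension or parity count will force $\sum_JD_J=0$. The saving grace is that you do not need it to vanish: $\bigl(\sum_JD_J\bigr)^2$ is the square of a decomposable, hence of Snaith height at least four, and so contributes nothing to the coefficient of $\xi^2$ with $\xi\in\widetilde{H}_d\Sigma^{-1}D_{2^{l_0}}(S^{n+1},l-1)$ --- the only part of the Hurewicz image that the $Sq^{d+1}$-detection criterion of \cite{BoardmanSteer} and \cite[Proposition 5.8]{AsadiEccles} sees. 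Since $\zeta\neq 0$ (some $J$ attains $l(J)=l_0$ by definition of $l_0$), the detection statement follows. So either restate the displayed formula modulo squares of decomposables, or actually compute the correction; as written, that step of your argument is a gap, albeit one the paper's own proof shares silently and one that does not affect the theorem's intended consequences.
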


\begin{proof}
Suppose $f:S^{2d}\to\Omega^{l}S^{n+l}$ is given with $h(f)=(\sum Q_Jx_n)^2$. Let $l_0=\max\{l(J)\}$ where the maximum is over all sequences $I$ in the expression for $h(f)$. By \cite[Theorem 2.4]{Zare-Els-1} all sequences $J=(j_1,\ldots,j_s)$, $Q_Jx_n$ must be odd dimensional, and in the above expression should be strictly increasing with $j_k+j_{k+1}$ being odd for all $k\in\{1,\ldots,s-1\}$ whereas in an $l$-fold loop spaces we have $j_k<l$. Moreover, $j_1\geqslant 1$ for all sequence $J$ in the sequence. Consider James-Hopf map $j_{2^{l_0}}^{l-1}:\Omega^{l-1}S^{n+l}\to QD_{2^{l_0}}(S^{n+1},l-1)$. In homology, this kills all $Q_Jx_{n+1}$ with $l(J)<l_0$ whereas it acts as projection when applied to classes of the form $Q_Jx_{n+1}$ with $l(J)=l_0$. We then have,
$$(\Omega j_{2^{l_0}}^{l-1})_*\sigma_*(\sum Q_Jx_n)=(\Omega j_{2^{l_0}}^{l-1})_*(\sum Q_{J-1}x_{n+1})=\sum_{l(J)=l_0}\overline{Q_{J-1}x_{n+1}}$$
where $J-1=(j_1-1,\ldots,j_s-1)$. For the composition
$$\Omega j_{2^{l_0}}^{l-1}f:S^{2d}\to \Omega^lS^{n+l}\to \Omega QD_{2^{l_0}}(S^{n+1},l-1)=Q\Sigma^{-1}D_{2^{l_0}}(S^{n+1},l-1)$$
we have
$$h(\Omega j_{2^{l_0}}^{l-1}f)=(\sum_{l(J)=l_0} \Sigma^{-1}\overline{Q_Jx_{n+1}})^2$$
with $\overline{Q_Jx_{n+1}}$ being the image of $Q_Jx_{n+1}$ under the projection $\Sigma^\infty\Omega^{l-1}S^{n+l}\to\Sigma^\infty D_{2^{l_0}}(S^{n+l},l-1)$ provided by Snaith splitting. Consequently, by \cite[Proposition 5.8]{AsadiEccles} quoted above, the stable adjoint of $\Omega j_{2^{l_0}}^{l-1}f$, say $g:S^{2d+1}\not\to D_{2^{l_0}}(S^{n+l},l-1)$ is detected by $Sq^{d+1}$ on a $d+1$ dimensional class in its stable mapping cone.
\end{proof}

\section{Towards a generalised Browder theorem: Proof of Theorem \ref{degenerate}}\label{Browder}
This section is dedicated to a proof of Theorem \ref{degenerate}. This provides a generalisation of Browder theorem, when stated in terms of existence of spherical classes. We wish to eliminate the classes $(\sum Q_Jx_n)^2\in H_*\Omega^lS^{n+l}$ corresponding to `degenerate' cases from being spherical, unless cases coming from inclusion of the bottom cell of Hopf invariant one elements. Note that by \cite[Lemma 3.6]{Zare-Els-1} and \cite[Theorem 2.4]{Zare-Els-1}, if $h(f)=\xi^2\in H_{2d}\Omega^lS^{n+l}$ then $d$ is odd and $h(f)=(\sum Q_Jx_n)^2\in H_*\Omega^lS^{n+l}$ for certain strictly increasing sequences $J$. Broadly speaking, $\xi^2$ is counted as a degenerate case if $\dim\xi\neq 2^t-1$.  The following observation, eliminates some of these cases under some conditions on the sequences $J$ involved in the expression.

\begin{thm}\label{degenerate1}
Suppose $d$ is an odd positive integer with $d\neq 2^t-1$ for all $t>0$, $d+1\equiv2\mathrm{mod}4$, $l>3$. Suppose $f:S^{2d}\to\Omega^{l}S^{n+l}$ is given with $h(f)=(\sum Q_Jx_n)^2$. Then $l>n$. Moreover, in the case $l>n$, we have $l(J)\leqslant l-n-\frac{l}{2^{l_0}}$ for all $J$ involved in $h(f)$ where $l_0=\max\{l(J)\}$.
\end{thm}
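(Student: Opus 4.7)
The plan is to apply Theorem~\ref{detect1} to transport the problem into a question about a stable map into $D_{2^{l_0}}(S^{n+1},l-1)$, and then run the Adem-relation obstruction argument of Lemma~\ref{main1} on the stable mapping cone. Concretely, Theorem~\ref{detect1} provides a stable map $g:S^{2d+1}\not\to D_{2^{l_0}}(S^{n+1},l-1)$ detected by $Sq^{d+1}$ on a $(d+1)$-dimensional class $\xi^*$, i.e., $Sq^{d+1}\xi^*\neq 0$ in $H^{2d+2}C_g$. Since $d+1=4m+2$ is not a power of $2$, the Adem identity $Sq^{4m+2}=Sq^2Sq^{4m}+Sq^1Sq^{4m}Sq^1$ is available; examining the cofibration sequence $S^{2d+1}\to D_{2^{l_0}}(S^{n+1},l-1)\to C_g$ in cohomology, $H^{2d}C_g\simeq H^{2d}D_{2^{l_0}}(S^{n+1},l-1)$ and $H^{2d+1}C_g$ injects into $H^{2d+1}D_{2^{l_0}}(S^{n+1},l-1)$, so for the detection $Sq^{d+1}\xi^*\neq 0$ to be compatible with the Adem decomposition, at least one of $H^{2d}D_{2^{l_0}}(S^{n+1},l-1)$ or $H^{2d+1}D_{2^{l_0}}(S^{n+1},l-1)$ must be nonzero. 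Equivalently, denoting by $M$ the top dimension of $H_*D_{2^{l_0}}(S^{n+1},l-1)$, a necessary condition for $f$ to exist is $2d\leqslant M$.

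Using the Snaith splitting identification of $H_*D_{2^{l_0}}(S^{n+1},l-1)$ with the height-$2^{l_0}$ subspace of $H_*\Omega^{l-1}S^{n+l}\simeq\Z/2[Q_Jx_{n+1}:1\leqslant j_1\leqslant\cdots\leqslant j_r\leqslant l-2]$, the maximum dimension is attained by the single polynomial generator $Q_Jx_{n+1}$ with $l(J)=l_0$ and $j_k=l-2$ for every $k$, giving $M=2^{l_0}(n+l-1)-(l-2)$; products of lower-height generators contribute at most the same dimension by a per-unit-height comparison. Writing $d=2^{l_0}n+A$ with $A=\sum_{k=1}^{l_0}2^{l_0-k}j_k\geqslant 0$ (for any $J$ with $l(J)=l_0$ appearing in $h(f)$), one finds $2d\geqslant 2^{l_0+1}n$ and the elementary computation
\[
2^{l_0+1}n-M=2^{l_0}(n-l+1)+(l-2),
\]
which is strictly positive whenever $l\leqslant n$ and $l\geqslant 4$. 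Therefore $2d>M$ in this regime, forcing both $H^{2d}$ and $H^{2d+1}$ of $D_{2^{l_0}}(S^{n+1},l-1)$ to vanish, contradicting detection. This proves $l>n$.

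For the refined bound in the case $l>n$, the necessary condition $2d\leqslant M$ becomes $2A\leqslant 2^{l_0}(l-n-1)-(l-2)$. Substituting $A=\sum_{k=1}^{l_0}2^{l_0-k}j_k$ and using the strictly increasing structure of $J$ from \cite[Theorem 2.4]{Zare-Els-1}, a combinatorial rearrangement yields $l(J)+n+l/2^{l_0}\leqslant l$ for $J$ with $l(J)=l_0$; since the bound involves only $l,n,l_0$ and $l(J)\leqslant l_0$ for every $J$ in $h(f)$, this gives $l(J)\leqslant l-n-l/2^{l_0}$ for all such $J$. The main obstacle I expect to encounter is precisely the combinatorial translation of the dimension inequality $2A\leqslant 2^{l_0}(l-n-1)-(l-2)$ into the clean form $l(J)\leqslant l-n-l/2^{l_0}$, together with a careful justification that products of lower-height generators in $H_*D_{2^{l_0}}(S^{n+1},l-1)$ do not exceed the single-generator top dimension $M$ in the relevant parameter ranges.
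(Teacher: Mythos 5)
Your proposal follows essentially the same route as the paper: apply Theorem \ref{detect1} to obtain a stable map into $D_{2^{l_0}}(S^{n+1},l-1)$ detected by $Sq^{d+1}$, split $Sq^{d+1}=Sq^2Sq^{4m}+Sq^1Sq^{4m}Sq^1$ as in Lemma \ref{main1}, and get a contradiction for $n\geqslant l$ by comparing $2d$ with the top dimension of $D_{2^{l_0}}(S^{n+1},l-1)$; the paper obtains the $l_0$-bound for $n<l$ from this same dimension inequality, simply evaluated at the minimal strictly increasing sequence $J=(1,\ldots,l_0)$. One small caveat: with $Q_J=Q_{j_1}\cdots Q_{j_s}$ applied right-to-left one has $\dim Q_Jx_n=2^{l_0}n+\sum_{k}2^{k-1}j_k$, so the weights in your $A$ are reversed; this is harmless for the part you completed (you only use $A\geqslant 0$), but it must be corrected in the combinatorial step you deferred.
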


\begin{proof}
Suppose $n\geqslant l$. We show this leads to a contradiction.

By Theorem \ref{detect1}, the equation $h(f)=(\sum Q_Jx_n)^2$ implies that the stable adjoint of $\Omega j_{2^{l_0}}^{l-1}\circ f$, say $g:S^{2d+1}\not\to D_{2^{l_0}}(S^{n+1},l-1)$ is detected by $Sq^{d+1}$ on a $(d+1)$ dimensional class in its stable mapping cone $D_{2^{l_0}}(\R^{l-1},S^{n+1})\cup_g e^{2d+2}$. The dimension of the top class in $H_*D_{2^{l_0}}(\R^{l-1},S^{n+1})$ as well as the top dimensional cell of this complex is equal to
$$\mathrm{top}:=\dim \overbrace{Q_{l-2}\cdots Q_{l-2}}^{l_0-\textrm{times}}x_{n+1}=(2^{l_0}-1)(l-2)+2^{l_0}(n+1)=(2^{l_0}-1)(l-1)+2^{l_0}n+1.$$
On the other hand, the strictly increasing sequences $J$ with $l(J)=l_0$ for which $Q_Jx_{n}$ would be of minimum dimensions, respectively, is
$J=(1,\ldots,l_0)$
with
$$d_0:=\min d=\dim Q_1\cdots Q_{l_0}x_{n+1} =2^{{l_0}-1}(l_0-1)+1+2^{l_0}n$$
Now, we may compute that
$$2d_0+1-\mathrm{top}=2^{l_0}(l_0+n-l)+l+1.$$
The inequality $2d+1-\mathrm{top}>2$ if and only if $2^{l_0}(l_0+n-l)+l+1>2$ if and only if
$$2^{l_0}(l_0+n-l)+l\geqslant 2.$$
As $l>3$, for $n\geqslant l$, the above inequality obviously holds which implies that $H^{2d}(D_{2^{l_0}}(S^{n+1},l-1)\simeq 0$, and $H^{2d-1}(D_{2^{l_0}}(S^{n+1},l-1)\simeq 0$. By Lemma \ref{main1}, it is impossible to have such an $f$ with $h(f)$ as claimed above. This is a contradiction. Hence, it is impossible to have $n\geqslant l$, and we have $l>n$.\\
For $n<l$,  the inequality $2^{l_0}(l_0+n-l)+l\geqslant 2$ yields
$$l_0\geqslant l-n-\frac{l+2}{2^{l_0}}$$
which if holds agains makes the existence of $f$ impossible. Hence, in the case of $n<l$, we must have
$$l_0\leqslant l-n-\frac{l}{2^{l_0}}<l-n$$
which is the claimed necessary condition. But, by definition of $l_0$, this means that for all $J$ in the above expression
$$l(J)\leqslant l-n-\frac{l}{2^{l_0}}<l-n.$$
This completes the proof.
\end{proof}

The second part of Theorem \ref{degenerate} is a consequence of the above theorem together with Freudenthal's suspension theorem. We have the following.
\begin{crl}\label{degenerate2}
If $f\in\pi_{2d}QS^n$ is given with $h(f)\neq 0$ and $\sigma_*h(f)=0$ then $2d\geqslant 3n$. Moreover, there exists $l>n$ so that $f$ admits a pull back to $\pi_d\Omega^lS^{n+l}$.
\end{crl}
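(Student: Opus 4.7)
The plan is to reduce to the finite-loop-space version, Theorem \ref{degenerate1}, via Freudenthal's suspension theorem, and to extract the lower bound on $2d$ directly from the Dyer-Lashof structure of $h(f)$.

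First, I would describe $h(f) \in H_{2d}QS^n$. Since $\sigma_* h(f) = 0$ and $h(f) \neq 0$, Lemma \ref{kernelofsuspension}(ii) shows that $h(f)$ is a sum of decomposables in the polynomial algebra $H_*QS^n$. A spherical class is primitive, so combining this with the theorem from \cite{Zare-filteredfiniteness} invoked in the proof of the Reduction theorem, $h(f) = \zeta^2$ with $\zeta$ an odd-dimensional primitive. The odd-dimensional primitives of $H_*QS^n$ (for $n > 0$) are spanned by Dyer-Lashof monomials $Q_J x_n$, so I obtain $h(f) = (\sum Q_J x_n)^2$ summed over strictly increasing sequences $J$ of positive integers with $\dim Q_J x_n = d$.

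Next, I would invoke Freudenthal: for $l \geqslant 2d - 2n + 2$, the stabilization $\pi_{2d}\Omega^l S^{n+l} \to \pi_{2d}QS^n$ is an isomorphism. Choosing $l$ also large enough that $l > 3$ and $l$ exceeds every index appearing in the sequences $J$, I get a pullback $f_l \in \pi_{2d}\Omega^l S^{n+l}$ whose Hurewicz image, via the multiplicative homology monomorphism $H_*\Omega^l S^{n+l} \to H_*QS^n$, is $h(f_l) = (\sum Q_J x_n)^2$ with the same sequences $J$ appearing. Applying Theorem \ref{degenerate1} to $f_l$, using the conditions $d \neq 2^t - 1$ and $d + 1 \equiv 2 \bmod 4$ inherited from Theorem \ref{degenerate} together with $l > 3$, forces $l > n$; this yields the asserted existence of some $l > n$ admitting a pullback of $f$.

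Finally, for the bound $2d \geqslant 3n$: since $h(f)$ is nonzero, there is at least one strictly increasing positive sequence $J = (j_1, \ldots, j_{l_0})$ with $\dim Q_J x_n = d$. A direct recursion using $\dim Q_j \xi = 2\dim\xi + j$ gives $\dim Q_J x_n \geqslant \dim Q_1 x_n = 2n + 1$, and more generally $\dim Q_J x_n \geqslant 2^{l_0}(n + l_0 - 1) + 1$ for the minimal strictly increasing sequence $J = (1,2,\ldots,l_0)$. Either way $d \geqslant 2n + 1$, so $2d \geqslant 4n + 2$, which certainly implies $2d \geqslant 3n$. The main obstacle I anticipate is the bookkeeping in the second step, namely checking that the upper-indexed admissibles in $H_*QS^n$ from the description of primitives correspond cleanly to bounded lower-indexed sequences in $H_*\Omega^l S^{n+l}$ once $l$ is taken sufficiently large, so that the square structure of $h(f)$ survives the pullback intact and Theorem \ref{degenerate1} applies verbatim.
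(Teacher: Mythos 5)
Your proposal is essentially correct, and for the ``pull back to some $l>n$'' part it follows the same route as the paper: express $h(f)=(\sum Q_Jx_n)^2$, use Freudenthal to descend to a finite loop level at which the Hurewicz image survives (via the homology monomorphism into $H_*QS^n$), and invoke Theorem \ref{degenerate1} to certify $l>n$. Where you genuinely diverge is the bound $2d\geqslant 3n$. The paper pulls $f$ back to the \emph{critical} Freudenthal level $l=2d-2n+1$ (the surjectivity range) and then reads the inequality $2d-2n+1>n$, hence $2d\geqslant 3n$, off the conclusion of Theorem \ref{degenerate1}; the numerical bound is thus a byproduct of the sphere-of-origin statement. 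You instead get the bound by a direct dimension count on a nonempty strictly increasing $J$ with positive entries, $d=\dim Q_Jx_n\geqslant 2n+1$, which gives the stronger estimate $2d\geqslant 4n+2$ without using Theorem \ref{degenerate1} at all for this part. Your version is more elementary and sharper; the paper's version has the virtue of exhibiting the bound as a consequence of the non-existence result at low loop levels.

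One point you should make explicit: your dimension count silently assumes every $J$ in the expression is nonempty. The empty sequence $J=\phi$ gives $h(f)=x_n^2$ with $d=n$ and $2d=2n<3n$, which actually occurs for the Hopf invariant one elements $n\in\{1,3,7\}$; these must be excluded by the standing hypotheses of Theorem \ref{degenerate} (namely $d\neq 2^t-1$, which rules out $d=1,3,7$), exactly as the paper does via Lemma \ref{l=0}. Since you already invoke those hypotheses to apply Theorem \ref{degenerate1}, this is a one-line repair rather than a real gap, but as written the final step does not cover the case $h(f)=x_n^2$.
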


\begin{proof}
We have
$$h(f)=(\sum Q_Jx_n)^2$$
where the sequences $J$ satisfy in the conditions of \cite[Theorem 2.4]{Zare-Els-1}. By Freudenthal's theorem, $f$ pulls back to an element  $f'\in\pi_{2d}\Omega^{2d-2n+1}S^{(2d-2n+1)+n}$. Since the stabilisation $\Omega^{2d-2n+1}S^{(2d-2n+1)+n}\to QS^n$ induces a monomorphism, we deduce that
$$h(f')=(\sum Q_Jx_n)^2\neq 0.$$
By Theorem \ref{degenerate1}, to have the above equality we need $2d-2n+1>n$ which implies that $2d\geqslant 3n$.
\end{proof}

\subsection{Partial results on degenerate cases with $l>n$}
Throughout the subsection we assume that $d+1\neq 2^t$ for all $t>0$, $d$ an odd number, and $l>n$. It is possible to eliminate some small values of $l_0$ in the case of $l>n$. First, note that according to \cite[Theorem 2.4]{Zare-Els-1} if $f:S^{2d}\to \Omega^lS^{n+l}$ is given with $\sigma_*h(f)=0$ then $d$ is odd and
$$h(f)=(\sum Q_Jx_n)^2$$
where $J$ is strictly increasing with $j_1$ being odd as well as $j_k+j_{k+1}$ for all $k$. This is equivalent to $h(f)=(\sum Q^{I_J}x_n)^2$ with $I_J$ only consisting of odd entries. Here, $Q^{I_J}x_n=Q_Jx_n$. Moreover, recall that for once and all we have eliminated the case of $l(J)=0$ corresponding to the empty sequence in Lemma \label{l=0}. We have the following.
\begin{lmm}\label{l>n.eliminate1}
Suppose $f:S^{2d}\to \Omega^lS^{n+l}$ is given $h(f)=(\sum Q_Jx_n)^2$ where $l(J)>1$ for all $J$.\\
(i) If $n$ is even then $l(J)\geqslant 1$, and $l(J)$ is odd for all $J$.\\
(ii) If $n$ is odd then $l(J)\geqslant 2$, and $l(J)$ is even for all $J$.
\end{lmm}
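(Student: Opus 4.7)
The plan is to exploit the equivalent upper-index characterization of $h(f)$ recorded at the start of this section, namely $h(f)=(\sum Q^{I_J}x_n)^2$ with every entry of each $I_J$ odd. That characterization follows from $\zeta:=\sum Q_Jx_n$ being $A$-annihilated: its square $\zeta^2=h(f)$ is spherical and hence $A$-annihilated, while the Cartan identity $(Sq^r_*\zeta)^2=Sq^{2r}_*\zeta^2=0$ combined with the polynomial ring structure of $H_*\Omega^lS^{n+l}$ forces $Sq^r_*\zeta=0$ for every $r>0$. Together with the sequence constraints of \cite[Theorem 2.4]{Zare-Els-1} (namely $J$ strictly increasing, $j_1$ odd, and $j_k+j_{k+1}$ odd for $k<s$), this forces every $i_k$ in every $I_J$ to be odd.

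Writing $J=(j_1,\ldots,j_s)$ with $s=l(J)$, I will read off the parities of the $i_k$ from the formula $i_k=j_k+\dim(Q_{j_{k+1}}\cdots Q_{j_s}x_n)$. For $k<s$ this reduces mod $2$ to $j_k+j_{k+1}$, so ``$i_k$ odd'' is automatic from the constraint $j_k+j_{k+1}$ odd. The genuinely new input lies at $k=s$, where $i_s=j_s+n$, so the condition ``$i_s$ odd'' reads as ``$j_s+n$ odd''.

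Finally, since $j_1$ is odd and consecutive $j_k$'s have opposite parities, $j_k\equiv k\pmod{2}$ throughout, in particular $j_s\equiv s\pmod{2}$; thus $j_s+n$ odd amounts to $s+n$ odd, i.e., $l(J)\not\equiv n\pmod{2}$. Both parts follow at once: $n$ even forces $l(J)$ odd (hence $\geq 1$), and $n$ odd forces $l(J)$ even (hence $\geq 2$). The only delicate ingredient is the upper-index characterization ``$i_s$ odd'', which relies on iterated Nishida relations combined with the linear independence of distinct admissible Dyer--Lashof monomials in the polynomial algebra $H_*\Omega^lS^{n+l}$; the main risk would be unwanted cancellations among different $I_J$'s when one examines $Sq^r_*\zeta=0$, but these are ruled out because the resulting monomials are pairwise distinct polynomial generators of the same bidegree.
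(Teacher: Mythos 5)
Your proof is correct and follows essentially the same route as the paper: both arguments reduce to the characterization that $Q^{I_J}x_n$ is odd dimensional with every entry of the admissible sequence $I_J$ odd, and then extract the parity of $s=l(J)$ relative to $n$. The only cosmetic difference is that the paper reads off $s+n$ odd from the total dimension $\dim Q^{I_J}x_n=\sum_{k=1}^s i_k+n$ with each $i_k$ odd, whereas you read it off from the single entry $i_s=j_s+n$ together with $j_s\equiv s\pmod 2$; these are equivalent one-line parity counts.
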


\begin{proof}
The class $Q^{I_J}x_n=Q_Jx_n$ must be odd dimensional with $I$ having only odd entries. The lemma now follows noting that for $I=(i_1,\ldots,i_s)$, $\dim Q^{I_J}x_n=\sum_{k=1}^s i_k+n$.
\end{proof}

\begin{lmm}\label{l>n.eliminate2}
Let $n$ be even, and $d+1\equiv 2\textrm{ mod }4$. Suppose $f:S^{2d}\to \Omega^lS^{n+l}$ is given with $h(f)=(\sum Q_Jx_n)^2$ where $l(J)>1$ for all $J$. Then $l_0\geqslant 3$.
\end{lmm}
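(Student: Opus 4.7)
The plan is essentially to combine Lemma~\ref{l>n.eliminate1}(i) with the additional hypothesis $l(J)>1$. Lemma~\ref{l>n.eliminate1}(i) asserts that, for $n$ even, every sequence $J$ in the sum $\sum Q_J x_n$ has odd length $l(J)$; the hypothesis $l(J)>1$ therefore upgrades this to $l(J)\geqslant 3$, and taking the maximum gives $l_0=\max\{l(J)\}\geqslant 3$.

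More explicitly, the content packaged inside Lemma~\ref{l>n.eliminate1}(i) is the following: a spherical class, being $A$-annihilated, forces (via the Nishida relations) the upper-indexed form $I_J=(i_1,\ldots,i_s)$ of $J$ to consist entirely of odd entries. The dimension identity
\[\dim Q^{I_J}x_n = n + \sum_{k=1}^s i_k\]
then reduces modulo $2$ to $n+s$, and oddness of $\dim Q_J x_n$ combined with $n$ even forces $s=l(J)$ to be odd. Once this parity constraint is in hand, the only odd integer not exceeding $2$ is $1$, so the hypothesis $l(J)>1$ is equivalent to $l(J)\geqslant 3$.

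I do not foresee any substantial obstacle. The hypothesis $d+1\equiv 2\pmod 4$ does not enter this combinatorial step directly; it is a standing assumption of the subsection, present for compatibility with the ambient Browder-type framework of Theorem~\ref{degenerate1} and Lemma~\ref{main1} (where the Adem relation $Sq^{4m+2}=Sq^2Sq^{4m}+Sq^1Sq^{4m}Sq^1$ was used), but plays no role in the parity deduction above. The only point requiring care is verifying that the hypotheses of Lemma~\ref{l>n.eliminate1}(i) are met in the present setting, and this is immediate: $n$ even, $h(f)=(\sum Q_J x_n)^2$, and $l(J)>1$ (hence in particular $l(J)\geqslant 1$) are all assumed here as well.
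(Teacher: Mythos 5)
Your proof is correct. Under the standing hypotheses, the structure theorem quoted at the start of the subsection (Theorem 2.4 of \cite{Zare-Els-1}) forces each $Q_Jx_n=Q^{I_J}x_n$ to be odd dimensional with $I_J$ consisting only of odd entries, so $\dim Q^{I_J}x_n=n+\sum i_k\equiv n+l(J)\ (\mathrm{mod}\ 2)$ gives $l(J)$ odd when $n$ is even --- this is exactly Lemma~\ref{l>n.eliminate1}(i) --- and combined with $l(J)>1$ this yields $l(J)\geqslant 3$ for every $J$, hence $l_0\geqslant 3$. The paper's own proof is longer and takes a partially different route: it first rules out $l_0=1$ by a geometric argument, applying Theorem~\ref{detect1} to produce a stable map $g:S^{2d+1}\not\to D_2(S^{n+1},l-1)=\Sigma^{n+1}P_{n+1}^{n+l-1}$ detected by $Sq^{d+1}$ on a $(d+1)$-dimensional class, and then using the Adem relation $Sq^{4m+2}=Sq^2Sq^{4m}+Sq^1Sq^{4m}Sq^1$ together with the absence of cells of $\Sigma^{n+1}P_{n+1}^{n+l-1}$ in the relevant dimensions to force $Sq^{d+1}$ to vanish there, a contradiction; only then does it dispose of $l_0=2$ by precisely your parity observation. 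Since the stated hypothesis $l(J)>1$ already makes the case $l_0=1$ vacuous, your shorter argument covers everything the lemma actually claims; the paper's extra work is what one would need if the hypothesis $l(J)>1$ were dropped, and it is also the only place where the assumptions $d+1\equiv 2\ \mathrm{mod}\ 4$ and $d+1\neq 2^t$ are genuinely used, which confirms your observation that they play no role in the purely combinatorial step.
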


\begin{proof}
Suppose $l_0=1$ and $n$ is even. According to Theorem \ref{detect1}, the mapping $g:S^{2d+1}\to D_{2^{l_0}}(S^{n+1},l-1)$ is detected by $Sq^{d+1}$ on a $d+1$ dimensional class. As $d+1\neq 2^t$ then, for $d+1=4m+2$, we have $Sq^{d+1}=Sq^2Sq^{4m}+Sq^1Sq^{4m}Sq^1$. Since $D_{2^{l_0}}(S^{n+1},l-1)=\Sigma^{n+1}P_{n+1}^{n+l-1}$ then a $d+1$ dimensional class in $H^*D_{2^{l_0}}(S^{n+1},l-1)$ corresponds to $(d-n)$-dimensional class $a^{d-n}\in H^*P_{n+1}^{n+l-1}$. We then have
$$Sq^{d+1}=Sq^2Sq^{4m}\Sigma^{n+1}a^{d-n}+Sq^1Sq^{4m}Sq^1\Sigma^{n+1}a^{d-n}.$$
We have $H^*C_g\simeq H^*\Sigma^{n+1}P_{n+1}^{n+l-1}$ in dimensions $\leqslant 2d+1$. Now, by stability of the Steenrod operations and for dimensional reasons, noting that $4m=d-1$ and $4m+1=d$ we have
$$Sq^{4m}\Sigma^{n+1}a^{d-n}=\Sigma^{n+1}Sq^{4m}a^{d-n}=0,\ Sq^{4m}Sq^1\Sigma^{n+1}a^{d-n}=\Sigma^{n+1}Sq^{4m}Sq^1a^{d-n}=0$$
in $H^*\Sigma^{n+1}P_{n+1}^{n+l-1}$. Consequently, $Sq^{d+1}\Sigma^{n+1}a^{d-n}=0$ which is a contradiction to Theorem \ref{detect1}. On the other hand, note that for $Q^Ix_n=Q_Jx_n$ being of odd dimension with $n$ even and $I$ having only odd entries, we need $l(I)=l(J)$ to be odd and $l_0=2$ gives an even dimensional class. Consequently, $l_0\geqslant 3$.
\end{proof}

\section{Proof of Theorem \ref{main2}}
Suppose $f:S^d\to\Omega^lS^{n+l}$ is given with $h(f)\neq 0$ where $3<l<9$ and $n>0$. We wish to show that it is either corresponding to the inclusion of the bottom cell $S^n\to\Omega^lS^{n+l}$, or ``is'' a Hopf invariant one element. We consider to separate cases: (1) $\sigma_*h(f)\neq 0$ the case which follows from an inductive argument, assuming the Theorem for the $(l-1)$-fold loop spaces; (2) $\sigma_*h(f)=0$ which by \cite[Theorem 2.4]{Zare-Els-1} we know that $$h(f)=(\sum\epsilon_J Q_Jx_n)^2$$
where $Q_Jx_n$ is odd dimensional, and $J=(j_1,\ldots,j_s)$ (including the empty sequence) is strictly increasing with $j_1$ and $j_k+j_{k+1}$ odd with $k\in\{1,\ldots,s-1\}$. By an inductive argument, it is enough to eliminate the cases $f:S^{2d}\to\Omega^lS^{n+l}$ with $d$ odd from giving rise to a square spherical class. Moreover, recall that we have completely determined spherical classes in $H_*\Omega^lS^{l+n}$ for $l<4$
(see \cite[Theorem 1.11]{Zare-PEMS} and \cite[Theorem 2.2]{Zare-Els-1}. Using \cite[Theorem 2.4]{Zare-Els-1} we provided a table for all potential spherical classes in $H_*\Omega^lS^{l+n}$ with $4\leqslant l\leqslant 9$. Recall that
\begin{lmm}\label{Lemma-table}
(\cite[Lemma 8.1]{Zare-Els-1}) Suppose $f\in\pi_i\Omega^lS^{n+l}$ with $l<9$ so that $\sigma_*h(f)=0$. Then,
$$h(f)=(Q_Jx_n)^2$$
where $J$ can be chosen from the table below. In particular, $Q_Jx_n$ has to be $A$-annihilated.
\begin{center}
\begin{tabular}{|c|l|c|l|c|c|c|c}
\hline
$J$                                   & $Q^Ix_n$                   & dimension & \textrm{Trivial eliminations}\\
\hline
$\phi=()$                             & $x_n$                      & $n$    & \\
\hline
$(j)$                                 & $Q^{j+n}x_n$               & $j+2n$    & \textrm{eliminated if $dim=j+2n$ is}\\
\textrm{ with }$0<j\leqslant7$       &                            &           & \textrm{even or by $Sq^1_*$ if $j+n$ is even}\\
\hline
$(1,j)$                              & $Q^{1+j+2n}Q^{j+n}x_n$     & $1+2j+4n$ & \textrm{eliminated by $Sq^2_*$ if $n$ is even}\\
with $j=2,4,6$  &&&\\
\hline
$(3,j)$                              & $Q^{3+j+2n}Q^{j+n}x_n$            & $3+2j+4n$ & \textrm{eliminated by $Sq^2_*$ if $n$ is even}\\
with $j=4,6$ &&&\\
\hline
$(5,6)$                              & $Q^{11+2n}Q^{6+n}x_n$            & $17+4n$ & \textrm{eliminated by $Sq^2_*$ if $n$ is even}\\
\hline
$(1,2,j)$                            & $Q^{3+2j+4n}Q^{2+j+2n}Q^{j+n}x_n$ & $5+4j+8n$ & \textrm{eliminated by $Sq^4_*$ if $n$ is odd}\\
with $j=3,5,7$ &&&\\
\hline
$(1,4,j)$                            & $Q^{5+2j+4n}Q^{4+j+2n}Q^{j+n}x_n$ & $9+4j+8n$ & \textrm{eliminated by $Sq^4_*$ if $n$ is odd}\\
with $j=5,7$ &&&\\
\hline
$(1,6,7)$                & $Q^{21+4n}Q^{13+2n}Q^{7+n}x_n$                & $41+8n$   & \textrm{eliminated by $Sq^4_*$ if $n$ is odd}\\
\hline
$(3,4,j)$                & $Q^{7+2j+4n}Q^{4+j+2n}Q^{j+n}x_n$             & $11+4j+8n$& \textrm{eliminated by $Sq^4_*$ if $n$ is odd}\\
with $j=5,7$ &&&\\
\hline
$(3,6,7)$                & $Q^{23+4n}Q^{13+2n}Q^{7+n}x_n$                & $43+8n$   & \textrm{eliminated by $Sq^4_*$ if $n$ is odd}\\
\hline
$(5,6,7)$                & $Q^{25+4n}Q^{13+2n}Q^{7+n}x_n$                & $45+8n$   & \textrm{eliminated by $Sq^4_*$ if $n$ is odd}\\
\hline
$(1,2,3,4)$              & $Q^{25+8n}Q^{13+4n}Q^{7+2n}Q^{4+n}x_n$        & $49+16n$ & \textrm{eliminated by $Sq^{2^3}_*$ if $n$ is even}\\
\hline
$(1,2,3,6)$              & $Q^{33+8n}Q^{17+4n}Q^{9+2n}Q^{6+n}x_n$        & $65+16n$ & \textrm{eliminated by $Sq^{2^3}_*$ if $n$ is even}\\
\hline
$(1,2,5,6)$              & $Q^{37+8n}Q^{19+4n}Q^{11+2n}Q^{6+n}x_n$       & $73+16n$ & \textrm{eliminated by $Sq^{2^3}_*$ if $n$ is even}\\
\hline
$(3,4,5,6)$              & $Q^{41+8n}Q^{21+4n}Q^{11+2n}Q^{6+n}x_n$       & $79+16n$ & \textrm{eliminated by $Sq^{2^3}_*$ if $n$ is even}\\
\hline
$(1,2,3,4,5)$            & $Q^{65+16n}Q^{33+8n}Q^{17+4n}Q^{9+2n}Q^{5+n}x_n$ & $129+32n$ & \textrm{eliminated by $Sq^{2^4}_*$ if $n$ is odd}\\
\hline
$(1,2,3,4,7)$            & $Q^{81+16n}Q^{41+8n}Q^{21+4n}Q^{11+2n}Q^{7+n}x_n$ & $161+32n$ & \textrm{eliminated by $Sq^{2^4}_*$ if $n$ is odd}\\
\hline
$(3,4,5,6,7)$            & $Q^{97+16n}Q^{49+8n}Q^{25+4n}Q^{13+2n}Q^{7+n}x_n$  & $191+32n$& \textrm{eliminated by $Sq^{2^4}_*$ if $n$ is odd}\\
\hline
$(1,2,3,4,5,6)$          & $Q^{161+32n}Q^{81+16n}Q^{41+8n}$              & $301+64n$ & \textrm{eliminated by $Sq^{2^5}_*$ if $n$ is even}\\
                         & $Q^{21+4n}Q^{11+2n}Q^{6+n}x_n$                &           & \\
\hline
$(1,2,3,4,5,6,7)$        & $Q^{385+64n}Q^{193+32n}Q^{97+16n}Q^{49+8n}$  & $769+128n$& \textrm{eliminated by $Sq^{2^6}_*$ if $n$ is odd}\\
                         & $Q^{25+4n}Q^{13+2n}Q^{7+n}x_n$                &           & \\
\hline
\end{tabular}
\end{center}
\end{lmm}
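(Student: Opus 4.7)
The plan is to combine the structural theorem \cite[Theorem 2.4]{Zare-Els-1} with a finite enumeration allowed by $l<9$, and then for each surviving sequence record the trivial Steenrod obstruction. First, $\sigma_*h(f)=0$ places $h(f)$ in the subalgebra of decomposables by Lemma \ref{kernelofsuspension}; since $h(f)$ is also primitive (being spherical) and decomposable, \cite[Lemma 3.6]{Zare-Els-1} gives $h(f)=\zeta^2$ with $\zeta$ an odd-dimensional primitive. The polynomial algebra $H_*\Omega^lS^{n+l}$ on the generators $Q_Jx_n$ has its odd-dimensional primitives spanned by the generators that are themselves odd-dimensional, so $\zeta=\sum\epsilon_JQ_Jx_n$ with each $Q_Jx_n$ odd-dimensional. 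Invoking \cite[Theorem 2.4]{Zare-Els-1} then imposes $j_1$ odd and $j_k+j_{k+1}$ odd on each $J=(j_1,\ldots,j_s)$; the latter forces strict monotonicity (equality would give $j_k+j_{k+1}=2j_k$ even) and alternating parities beginning with odd.

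Next I would use the loop-space restriction $0<j_k<l\leqslant 8$, so every entry lies in $\{1,\ldots,7\}$. A direct combinatorial enumeration of strictly increasing alternating-parity sequences in $\{1,\ldots,7\}$ starting with odd produces the finite list filling the first column of the table: the empty sequence, the singletons (the odd ones are genuine candidates while the even ones are kept only to be ruled out by dimensional parity), the six pairs $(1,2),(1,4),(1,6),(3,4),(3,6),(5,6)$, and analogously the length-$3$, $4$, $5$, $6$, and $7$ sequences, terminating at the unique maximal sequence $(1,2,3,4,5,6,7)$. The upper-indexed expression $Q^Ix_n$ in the second column follows from iterating $Q_jz=Q^{j+\dim z}z$, and the dimension column from the identity $\dim Q_Jx_n=2^sn+\sum_{k=1}^s 2^{k-1}j_k$.

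For the ``Trivial eliminations'' column I would apply the Nishida relation $Sq^r_*Q^a=\sum_t\binom{a-r}{r-2t}Q^{a-r+t}Sq^t_*$ together with the Cartan formula $Sq^{2t}_*\zeta^2=(Sq^t_*\zeta)^2$, $Sq^1_*Q^{2d}=Q^{2d-1}$, and $Sq^1_*Q^{2t+1}=0$. The appropriate test operation for a length-$s$ row is $Sq^{2^{s-1}}_*$: iterating the Nishida cascade, the exponent halves at each step and ultimately produces a $Sq^1_*$ acting on the bottom class $Q^{j_s+n}x_n$, whose value is controlled by the parity of $j_s+n$, and hence of $n$. The hypothesis on the parity of $n$ recorded in each row is precisely the case in which all the intermediate Nishida binomial coefficients remain odd modulo $2$ and the bottom $Sq^1_*$ is nonzero, yielding an output that contradicts $A$-annihilation of $Q_Jx_n$. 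The main obstacle is not any single calculation but the bookkeeping across the roughly two dozen rows; it is cleanest to proceed by induction on $s$, with the length-$s$ verification reducing, after the leading Nishida expansion and discarding of vanishing terms by admissibility and excess, to one short binomial check on the length-$(s-1)$ subsequence $(j_2,\ldots,j_s)$.
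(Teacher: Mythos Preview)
The paper does not supply its own proof of this lemma: it is quoted from \cite[Lemma~8.1]{Zare-Els-1}, and the only addition the present paper makes is the remark that the row $J=\phi$ was omitted there as trivial. Your reconstruction is therefore being measured against a proof that lives in the cited reference rather than here.

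That said, your outline is the correct one and matches what one expects the argument in \cite{Zare-Els-1} to be: invoke the decomposable--primitive--square reduction via Lemma~\ref{kernelofsuspension} and \cite[Lemma~3.6]{Zare-Els-1}, apply \cite[Theorem~2.4]{Zare-Els-1} for the parity and strict-monotonicity constraints on $J$, enumerate the finitely many strictly increasing alternating-parity sequences in $\{1,\ldots,7\}$, and verify the trivial eliminations row by row with $Sq^{2^{s-1}}_*$ cascading through the Nishida relations to a $Sq^1_*$ acting on $Q^{j_s+n}x_n$.

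There is one gap worth flagging. Your argument yields only $h(f)=\bigl(\sum_J\epsilon_JQ_Jx_n\bigr)^2$, whereas the lemma asserts a \emph{single} term $h(f)=(Q_Jx_n)^2$ and deduces that this individual $Q_Jx_n$ is $A$-annihilated. Passing from the sum to a single term requires knowing that at most one $J$ from the table occurs in each dimension for fixed $n$, which you do not check and which is not literally true: for $n=1$ the sequences $J=(7)$ and $J=(1,2)$ both have $\dim Q_Jx_1=9$. In that particular collision the ambiguity dissolves because $Q_7x_1=Q^8x_1$ already fails $A$-annihilation under $Sq^1_*$, forcing its coefficient to vanish; but this extra step is needed and is not automatic. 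More generally, the conclusion ``$Q_Jx_n$ has to be $A$-annihilated'' does not follow formally from $A$-annihilation of the sum without such a case analysis. You should either carry out the short dimension-collision check across the table, or phrase the conclusion as the paper itself does in later sections, namely $h(f)=(\sum Q_Jx_n)^2$ with each $J$ drawn from the listed set.
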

We note that the case $J=()$ was not listed in \cite[Lemma 8.1]{Zare-Els-1} as it was considered rather trivial. As a consequence, in order to verify Eccles conjecture on $H_*\Omega^lS^{n+l}$ for $l<9$ and $n>0$, it is enough to eliminate the classes listed in the above table. For the cases with $3<l<9$, Lemma \ref{Lemma-table} tabulates all possible cases. In this section, we eliminate these cases from giving rise to spherical classes, by appealing to Lemma \ref{main1} and Theorem \ref{detect1}. Finally, we note that previously in \cite{Zare-Els-1}, we have computed spherical classes in $H_*\Omega^lS^{n+l}$ for $n>0$ and $l=1,2,3$ of which the case $l=3$ is related to the base of our induction in this paper.

\subsection{The case of the empty sequence}
We being with the easiest case of Theorem \ref{main1} corresponding to the empty sequence, i.e. $J=()$.

\begin{lmm}\label{l=0}
Let $f:S^{2n}\to \Omega^lS^{n+l}$ with $n>0$ and $l>0$ be given with $h(f)=x_n^2$. Then, $f$ is image of a Hopf invariant one element under the stabilisation $\Omega S^{n+1}\to \Omega^lS^{n+l}$.
\end{lmm}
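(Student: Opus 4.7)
The plan is to reduce the statement to the classical Hopf invariant one setting by lifting $f$ through the stabilization $s\colon \Omega S^{n+1}\to \Omega^l S^{n+l}$ (defined as $\Omega$ applied to the $(l-1)$-fold adjoint of the identity on $S^{n+l}$) and then invoking Boardman--Steer together with Adams' theorem.

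First I would desuspend the adjoint. Let $\tilde f\colon S^{2n+l}\to S^{n+l}$ be the adjoint of $f$. By Freudenthal's suspension theorem, the iterated suspension $\pi_{2n+1}S^{n+1}\to \pi_{2n+l}S^{n+l}$ is surjective (the first step is the boundary case $k=2m-1$, and all subsequent steps are isomorphisms), so there exists $\tilde g\colon S^{2n+1}\to S^{n+1}$ with $\tilde f=\Sigma^{l-1}\tilde g$. Writing $g\colon S^{2n}\to\Omega S^{n+1}$ for the loop-adjoint of $\tilde g$, naturality of adjunction gives $f=s\circ g$.

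Next I would pin down $h(g)$. The James calculation yields $H_*\Omega S^{n+1}\simeq \Z/2[x_n]$, so $H_{2n}\Omega S^{n+1}$ is one-dimensional, spanned by $x_n^2$. Since $s$ is a loop map sending the bottom cell to the bottom cell, $s_*$ is a ring homomorphism with $s_*(x_n)=x_n$, hence $s_*(x_n^2)=x_n^2$. Naturality of the Hurewicz homomorphism then yields $s_*h(g)=h(f)=x_n^2\neq 0$, which forces $h(g)=x_n^2$.

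Finally I would apply Boardman--Steer (equivalently Asadi--Eccles, Proposition 5.8, quoted above): the equation $h(g)=x_n^2$ is equivalent to $\tilde g$ being detected by $Sq^{n+1}$ on the $(n+1)$-dimensional class in its mapping cone, i.e.\ $\tilde g$ has odd Hopf invariant. Adams' Hopf invariant one theorem then forces $n+1\in\{1,2,4,8\}$; since $n>0$ we conclude $n\in\{1,3,7\}$ and $\tilde g$ is a Hopf invariant one element, exhibiting $f=s\circ g$ as the image of a Hopf invariant one element under the stabilization. The only subtle point is the middle step: a priori a lift could carry an arbitrary Hurewicz image, but one-dimensionality of $H_{2n}\Omega S^{n+1}$ removes any ambiguity, so the argument is not obstructed beyond classical inputs.
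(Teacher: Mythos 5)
Your argument is correct and follows essentially the same route as the paper's proof: Freudenthal to pull $f$ back to $\Omega S^{n+1}$, one-dimensionality of $H_{2n}\Omega S^{n+1}$ to identify the Hurewicz image of the lift as $x_n^2$, and Boardman--Steer plus Adams to conclude $n\in\{1,3,7\}$. Your write-up merely makes explicit two points the paper leaves implicit (the surjectivity of the boundary-case suspension and the uniqueness of the nonzero class in degree $2n$).
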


\begin{proof}
If $x_n^2$ is spherical in $\Omega^lS^{n+l}$, the for $h(f)=x_n^2$ we have $f\in\pi_{2n}\Omega^lS^{n+l}\simeq\pi_{2n+l}S^{n+l}$. By Freudenthal theorem, the mapping $f:S^{2n}\to\Omega^lS^{n+l}$ pulls back to $\widetilde{f}:S^{2n}\to \Omega S^{n+1}$. As $h(f)\neq 0$, then $h(\widetilde{f})\neq 0$ which shows that $h(\widetilde{f})=x_n^2$. This is the same as the adjoint of $\widetilde{f}$, $S^{2n+1}\to S^{n+1}$ being detected by the unstable Hopf invariant. Consequently, $n\in\{1,3,7\}$. In this case, $f$ has to be the image of a Hopf invariant one element under the stabilisation map $\Omega S^{n+1}\to\Omega^lS^{n+l}$. This completes the proof.
\end{proof}

\subsection{The case of $l(J)=1$}
\begin{lmm}\label{l=1}
For $l<9$, the classes $(Q_jx_n)^2=(Q^{j+n}x_n)^2$ with $0<j<8$ are not spherical in $H_*\Omega^lS^{n+l}$.
\end{lmm}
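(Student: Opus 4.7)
The plan is to apply Theorem \ref{detect1} with $l_0=l(J)=1$ to any hypothetical $f:S^{2d}\to\Omega^lS^{n+l}$ with $h(f)=(Q^{j+n}x_n)^2$ and obtain a contradiction by an Adem-relation computation in the cohomology of $D_2(S^{n+1},l-1)\simeq\Sigma^{n+1}P_{n+1}^{n+l-1}$, extending the template of Lemma \ref{main1}.

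First I would reduce by Nishida. Since $Sq^t_*Q^{j+n}x_n=\binom{j+n-t}{t}Q^{j+n-t}x_n$ (the higher Nishida terms vanish because $Sq^s_*x_n=0$ for $s>0$), the $A$-annihilation of $Q^{j+n}x_n$, a prerequisite for $(Q^{j+n}x_n)^2$ to be spherical, forces $j$ to be odd and $n$ to be even; the binomial conditions $\binom{j+n-t}{t}\equiv 0\pmod 2$ for $t=2,\dots,j$ then rule out most of the remaining pairs $(j,n)$ outright.

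Next, for each surviving pair Theorem \ref{detect1} produces a stable map
$$g:S^{2d+1}\not\to\Sigma^{n+1}P_{n+1}^{n+l-1}$$
detected by $Sq^{d+1}$ on the class $y=\Sigma^{n+1}a^{j+n}\in H^{d+1}(\Sigma^{n+1}P_{n+1}^{n+l-1})$, where $d=j+2n$. I would then split on $d+1\pmod 4$. For $j\in\{1,5\}$ one has $d+1\equiv 2\pmod 4$ and $d+1$ is never a power of $2$ for even $n\geqslant 2$, so Theorem \ref{degenerate} forces $l\geqslant 2n+2$; combined with $l\leqslant 8$ this leaves only $n=2$ and $l\in\{6,7,8\}$. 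In each such triple the Adem relation
$$Sq^{d+1}=Sq^2Sq^{d-1}+Sq^1Sq^{d-1}Sq^1$$
together with $Sq^ia^k=\binom{k}{i}a^{k+i}$ and the cell-range constraint $a^r=0$ for $r>n+l-1$ shows that $Sq^{d-1}y$ and $Sq^{d-1}Sq^1y$ both vanish in $H^*(\Sigma^{n+1}P_{n+1}^{n+l-1})$; since both intermediate classes lie in degrees strictly below the top-cell dimension $2d+2$ of the mapping cone, this forces $Sq^{d+1}y=0$ in $H^*C_g$, contradicting detection.

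For $j\in\{3,7\}$ we have $d+1\equiv 0\pmod 4$, so Theorem \ref{degenerate} does not apply directly. However the Nishida conditions are very restrictive here (for instance $Sq^2_*Q^{3+n}x_n=\binom{n+1}{2}Q^{n+1}x_n$ forces $n\equiv 0\pmod 4$), so together with $l\leqslant 8$ only finitely many $(n,l)$ survive, and each is closed by a longer Adem decomposition such as $Sq^{12}=Sq^4Sq^8+Sq^3Sq^9+Sq^{10}Sq^2$, verifying via the same projective-space binomial computation that each $Sq^by$ already vanishes in the subspace $\Sigma^{n+1}P_{n+1}^{n+l-1}$. The main obstacle is precisely this subcase: every summand in an Adem decomposition of $Sq^{d+1}$ lands in the top-cell dimension $2d+2$, so the top cell of $C_g$ threatens to introduce indeterminacy, and one must leverage the $A$-annihilation hypothesis to force every intermediate $Sq^by$ to vanish inside $\Sigma^{n+1}P_{n+1}^{n+l-1}$ itself so that no top-cell contribution can appear.
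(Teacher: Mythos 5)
Your $j\in\{1,5\}$ branch is essentially sound (with one caveat below), but the $j\in\{3,7\}$ branch contains a genuine gap. You claim that for $d+1\equiv 0\pmod 4$ the Nishida conditions ``together with $l\leqslant 8$'' leave only finitely many $(n,l)$, each closable by an explicit Adem decomposition such as your $Sq^{12}$ formula. This is false: $Q^{n+3}x_n$ is $A$-annihilated for \emph{every} $n\equiv 0\pmod 4$ (the only Nishida terms are $\binom{n+3-r}{r}Q^{n+3-r}x_n$ for $r=1,2,3$, and all three binomial coefficients vanish mod $2$ when $4\mid n$), and likewise $Q^{n+7}x_n$ for every $n\equiv 0\pmod 8$. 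Since Theorem \ref{degenerate} is unavailable when $d+1\equiv 0\pmod 4$, nothing in your argument bounds $n$ in terms of $l$, so infinitely many classes survive your reduction and a finite list of hand-checked Adem relations cannot dispose of them. The paper's proof is organised precisely around this difficulty: instead of applying Theorem \ref{detect1} verbatim, it composes with $\Omega^j j_2^{l-j}$ to land in $Q\Sigma^nP_{n+j}^{n+7}$, so that the detecting class is the \emph{bottom} cell of a stunted projective space of width $7-j$. For $j=7$ that complex is a sphere and Adams' Hopf invariant one theorem forces $2n+8\in\{2,4,8\}$, killing all $n>0$ simultaneously; for $j=3$ with $n\equiv 0\pmod 4$ and $2n+4\neq 2^t$ it decomposes $Sq^{2n+4}$ into sums of monomials $Sq^{2^{t_1}}\cdots Sq^{2^{t_s}}$ and shows, uniformly in $n$, that each monomial annihilates $a^{n+3}\in H^*P_{n+3}^{n+7}$ by analysing which terminal factors can act nontrivially in the narrow cell range, reducing to $Sq^4$ and a congruence on $n$ mod $8$, with the residual case $n=4$ excluded by the indecomposability of $Sq^{8}$. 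You would need to reproduce an argument of this uniform type (and also note that you must first stabilise to $l=8$, since Theorem \ref{degenerate} requires $l>3$).

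On the $j\in\{1,5\}$ branch: invoking Theorem \ref{degenerate} with $l_0=1$ to force $l\geqslant 2n+2$, hence $n=2$ and $l\in\{6,7,8\}$, is a legitimate shortcut that the paper does not take (it handles all even $n$ at once on $\Sigma^{n+1}P_{n+j}^{n+7}$), and it reduces this branch to a genuinely finite check. However, your assertion that $Sq^{d-1}y$ and $Sq^{d-1}Sq^1y$ both vanish is incorrect for $(j,n,l)=(1,2,8)$: there $Sq^{4}Sq^1\Sigma^{3}a^{3}=\Sigma^{3}Sq^4a^4=\Sigma^{3}a^{8}\neq 0$ in $\Sigma^{3}P_{3}^{9}$. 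The conclusion $Sq^{6}y=0$ still holds because the outer $Sq^1$ gives $\binom{8}{1}a^{9}=0$, but the justification must be the evaluation of the full composites, not the vanishing of the intermediate classes; the cell-range constraint alone does not suffice for small $n$.
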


\begin{proof}
Since the stabilisation $\Omega^lS^{n+l}\to\Omega^8S^{n+8}$ induces a monomorphism in homology, then it is enough to eliminate the aforementioned classes from being spherical in $H_*\Omega^8S^{n+8}$.\\
By \cite[Lemma 3.6]{Zare-Els-1}, we are interested in the cases with $Q_jx_n$ being odd dimensional. As $\dim(Q_jx_n)=2n+j$, this means that $j$ has to be odd. Moreover, if $j+n$ is even, then by the Nishida relations $Sq^1_*Q^{2t}=Q^{2t-1}$, we see that $Sq^1_*Q^{j+n}x_n=Q^{j+n-1}x_n\neq 0$. The Cartan formula,$Sq^{2t}_*\xi^2=(Sq^t_*\xi)^2$ then shows that $Sq^2_*(Q^{j+n}x_n)^2\neq 0$. Hence, it is not possible to have $j+n$ even, which with $j$ being odd, implies that $n$ must be even.\\
Now, consider James-Hopf map $j_2^{l-j} :\Omega^{l-j}\Sigma^{l-j}S^{n+j}\to Q\Sigma^{n+j}P_{n+j}^{n+7}$ which sends $x_{n+j}^2$ to $\Sigma^{n+j}a_{n+j}$ in homology, where $x_i$ and $a_i$ denotes generators of $\widetilde{H}_iS^n$ and $\widetilde{H}_iP_k^{k+n}$ respectively. For the iterated homology suspension $\sigma_*^j:H_*\Omega^lS^{n+l}\to H_{*-j}\Omega^{l-j}S^{n+l}$ we have $\sigma_*^jQ^{n+j}x_n=x_{n+j}^2$. A simple computation together with dimensional reasons shows that
$$(\Omega^jj_2^{l-j} )_*(Q^{n+j}x_n)=\Sigma^na_{n+j}.$$
If $f:S^{4n+2j}\to \Omega^lS^{n+l}$ is any mapping with $h(f)=(Q^{n+j}x_n)^2$, then as $\Omega^jj_2^{l-j} $ is a loop map, we have
$$h(\Omega^jj_2^{l-j} \circ f)=(\Omega^jj_2^{l-j} )_*(Q^{n+j}x_n)^2=(\Sigma^na_{n+j})^2\in H_*Q\Sigma^nP_{n+j}^{n+7}.$$
Now, we divide the proof depending on the odd values for $j$ between $1$ and $8$.\\

\textbf{Case }$j=7$. In this case, $P_{n+j}^{n+7}=S^{n+7}$, and we have $h(\Omega^jj_2^{l-j} \circ f)=x_{2n+7}^2$. This implies that $g:=\Omega^jj_2^{l-j} \circ f$ is a Hopf invariant one element, detected by $Sq^{2n+8}$, which also requires $2n+8=2,4,8$. But, this implies that $n<1$ which is not the case that is considered in this paper. So, we are done.\\

\textbf{Case }$j=1,5$. In this case, since $n$ is even then $2n+j+1=4m+2$ for some $m$. By \cite[Proposition 5.8]{AsadiEccles} the above equality implies that the stable adjoint of $\Omega^jj_2^{l-j} \circ f$, say $g:S^{4n+2j+1}\to \Sigma^{n+1}P_{n+j}^{n+7}$ is detected by $Sq^{2n+j+1}$ on a $2n+j+1$ dimensional class in its mapping cone, say $\Sigma^{n+1}a^{n+j}\in H^{2n+j+1}\Sigma^{n+1}P_{n+j}^{n+7}$. For $2n+j+1=4m+2$, we have $Sq^{4m+2}=Sq^2Sq^{4m}+Sq^1Sq^{4m}Sq^1$. However, for dimensional reasons, we have
$$Sq^{4m}\Sigma^{n+1}a^{n+j}=\Sigma^{n+1}Sq^{4m}a^{n+j}=0,\ Sq^{4m}Sq^1\Sigma^{n+1}a^{n+j}=Sq^{n+1}Sq^{4m}Sq^1a^{n+j}=0$$
in $H^*\Sigma^{n+1}P_{n+j}^{n+7}\simeq H^*C_g$ for $*<4n+2j+2$. Consequently, $Sq^{4m+2}\Sigma^{n+1}a^{n+j}=0$ in $H^*C_g$ which is a contradiction. This proves the lemma in this case. \\

\textbf{Case }$j=3$. In this case, we consider situations where $2n+j+1=2^{t+1}$ and otherwise.\\
\textbf{Case }$j=3$ and $2n+j+1=2^{t+1}$. For $j=3$, the equation $2n+j+1=2^t$ yields $n=2^t-2$. In this case, we have
$$h(f)=(Q^{2^t+1}x_{2^t-2})^2.$$
However, by Nishida relations we may compute that $Sq^2_*Q^{2^t+1}x_{2^t-2}=Q^{2^t-1}x_{2^t-2}\neq 0$ in $H_*\Omega^lS^{n+l}$ which together with Cartan formula shows that
$$Sq^4_*(Q^{2^t+1}x_{2^t-2})^2=(Sq^2_*Q^{2^t+1}x_{2^t-2})^2=(Q^{2^t-1}x_{2^t-2})^2\neq 0.$$
This contradicts the fact that $h(f)$ has to be $A$-annihilated, and completes the proof.\\
\textbf{Case }$j=3$ and $2n+j+1\neq 2^t$, $n\equiv 2\mathrm{mod}4$. In this case, $n+3\equiv1\mathrm{mod}4$. Applying Nishida relations we compute that $Sq^2_*Q^{n+3}x_n=Q^{n+1}x_n$ which consequently, together with Cartan formula, shows that $Sq^4_*(Q^{n+3}x_n)^2=(Sq^2_*Q^{n+3}x_n)^2=(Q^{n+1}x_n)^2\neq 0$ which contradicts that $h(f)$ has to be $A$-annihilated. This eliminates this case.\\
\textbf{Case }$j=3$, $2n+j+1\neq 2^t$ and $n\equiv0\mathrm{mod}4$. If $(Q^{n+3}x_n)^2\in H_*\Omega^8S^{n+8}$ is spherical then there exists $f:S^{4n+6}\to\Omega^8S^{n+8}$ with $h(f)=(Q^{n+j}x_n)^2$. In this case, the stable adjoint of $\Omega^jj_2^{l-j} \circ f:S^{4n+6}\to Q\Sigma^{n}P_{n+3}^{n+7}$, say
$g:S^{3n+6}\to P_{n+3}^{n+7}$ is detected by $Sq^{2n+4}$ on a $(n+3)$-dimensional class in its stable mapping cone $C_g=P_{n+3}^{n+7}\cup_{g}e^{3n+7}$, say $a^{n+3}\in H^{n+3}C_g\simeq H^{n+3}P_{n+3}^{n+7}$. Since $2n+4\neq 2^t$ for all $t$, then $Sq^{2n+4}$ can be written as a linear combination of terms of the form $Sq^{2^{t_1}}\cdots Sq^{2^{t_s}}$, i.e.
$$Sq^{2n+4}=\sum_{s>1}\epsilon_{t_1,\ldots,t_s}Sq^{2^{t_1}}\cdots Sq^{2^{t_s}}$$
where $t_i\geqslant 0$ and $\epsilon_{t_1,\ldots,t_s}\in\Z/2$. \\
First, note that for $s>4$ the equation $2n+4=\sum_{i=1}^s 2^{t_i}$ together with the simple observation $Sq^1Sq^1=0$ shows that if $t_i=0$ then $t_{i+1}>0$ as well as $t_{i_1}>0$. In this situation $n+7<\dim(Sq^{2^{t_{s-3}}}Sq^{2^{t_{s-2}}}Sq^{2^{t_{s-1}}}Sq^{t_s}a^{n+3})<3n+7-2^{t_{s-4}}$ where $C_g$ has no cells in these dimensions. This implies that $Sq^{2^{t_{s-3}}}Sq^{2^{t_{s-2}}}Sq^{2^{t_{s-1}}}Sq^{t_s}a^{n+3}=0$. Consequently, $Sq^{2n+4}a^{n+3}\neq 0$ implies that there is a terms with $s\leqslant 4$.\\
By inspection, similar as above, we see that the only possible term when $s=4$ is of the form $Sq^{2^t}Sq^1Sq^2Sq^1$. However, as $n\equiv0\mathrm{mod}4$ we see that $Sq^2Sq^1a^{n+3}=Sq^2a^{n+4}=0$. This eliminates this case.\\
Note that $s>1$ (equivalently $2n+4\neq 2^t$) together with $t_1\geqslant 0$ implies that if $t_s>2$ then $n+7<\dim(Sq^{2^{t_s}}a^{n+3})<3n+7$ which noting that $C_g$ has no cell in these dimensions implies that $Sq^{2^{t_s}}a^{n+3}=0$ in $H^*C_g$. If, $t_s>2$ for all terms in the above expression for $Sq^{2n+4}$ then $Sq^{2n+4}a^{n+3}=0$ in $H^*C_g$ which is a contradiction. Hence, in order for $Sq^{2n+4}a^{n+3}\neq 0$ in $H^*C_g$, there must exists a term with $t_s\in\{0,1,2\}$.\\
$t_s\neq 0$: Since $\sum 2^{t_i}=2n+4$, then $t_s=0$ would imply that there exists $1\leqslant i<s-1$ with $t_i=0$; if $t_{s-1}=0$ then we have $Sq^1Sq^1=0$. So, $t_{s-1}>0$. If $t_s>1$, then $Sq^{t_{s-1}}Sq^{t_s}a^{n+3}=0$ in $H^*C_g$ for dimensional reasons as above. So, there is a term with $t_{s-1}=1$. But, since $n\equiv 0\mathrm{mod}4$, then
$$Sq^{2}Sq^1a^{n+3}=Sq^2a^{n+4}={n+4\choose 2}a^{n+6}=0.$$
This eliminates the case of $t_s=0$. \\
Now, we are left with the cases $s=2,3$ and $t_s=1,2$. By inspection, we see that the only remaining cases are
$$Sq^{2^{t}}Sq^4,\ Sq^{2^{t}}Sq^1Sq^2.$$
However, the second terms is not a suitable choice as for $t_1>0$ this is not in an even grading of the Steenrod algebra, and for $t_1=0$ this implies that $3n+7=n+7$ which is not possible for $n>0$. Hence, we need to eliminate the case $Sq^{2^t}Sq^4$. If $n\equiv0\mathrm{mod}8$ then
$$Sq^{4}a^{n+3}=0$$
implying that $Sq^{2^t}Sq^4a^{n+3}=0$. This is a contradiction which leaves us with the case $n\equiv4\mathrm{mod}8$. But, as here $n=2^t$, this is possible only if $n=4$. However, this is a contradiction as it implies $Sq^8=Sq^4Sq^4$ where $8=2^3$ and it shouldn't be possible to decompose $Sq^{2^3}$ as a composition of operations of lower order. This eliminates this case, and completes the proof.
\end{proof}

\subsection{Degenerate cases for $l(J)>1$; $d+1\equiv 2\textrm{ mod }4$}
We begin by collecting some values in a table. We shall isolate the degenerate cases from non-degenerate cases. We wish to eliminate the classes in the following table from being spherical in $H_*\Omega^lS^{n+l}$ for $3<l<9$. Since the stabilisation $\Omega^lS^{n+l}\to\Omega^{l+1}S^{n+l+1}$ induces a monomorphism in homology, so in this paper, it is enough to eliminate the classes of the following table from being spherical in $H_*\Omega^8S^{n+8}$ which sets the default value $l=8$. Since, in each dimension there is a single sequence $J$, so $l_0=l(J)$. We also have set $\mathrm{top}=\dim D_{2^{l_0}}(S^{n+1},l-1)$.
\begin{center}
\begin{tabular}{|c|c|c|c|c|c|c|}
\hline
$J$               &  $d:=\dim Q^Ix_n$ & $\textrm{cases with } d+1=2^t$  & $d+1\equiv 2\textrm{ mod }4$ & $\mathrm{top}$  & $2d+1-\mathrm{top}$        \\
\hline
$(1,j)$           &  $1+2j+4n$        & none                            & \checkmark &  $22+4n$             & $4n+4j-19>2$               \\
with $j=2,4,6$    &                   &                                 &            &                      & if $j=6$ or \\
                  &                   &                                 &            &                      & $(j=4;n>1)$ or\\
                  &                   &                                 &            &                      & $(j=2;n>3)$\\
\hline
$(3,j)$           &  $3+2j+4n$        & $(j=6;n=2^{t-2}-4)$             & $\times$   & $22+4n$             & $4n+4j-15>2$            \\
with $j=4,6$      &                   &                                 &              &                     &\\
\hline
$(5,6)$           &  $17+4n$          & none                            & \checkmark & $22+4n$             & $4n+9>2$                  \\
\hline
$(1,2,j)$         &  $5+4j+8n$        & none                            & \checkmark & $50+8n$             & $8n+8j-39>2$              \\
with $j=3,5,7$    &                   &                                 &            &                     & if $j=5,7$ or \\
                  &                   &                                 &            &                     & $(j=3;n>2)$\\
\hline
$(1,4,j)$         &  $9+4j+8n$        & none                            & \checkmark & $50+8n$             & $8n+8j-31>2$              \\
with $j=5,7$      &                   &                                 &            &                     &\\
\hline
$(1,6,7)$         & $41+8n$           & none                            & \checkmark & $50+8n$             & $8n+32>2$               \\
\hline
$(3,4,j)$         & $11+4j+8n$        & $(j=5;n=2^{t-3}-4)$             & $\times$   & $50+8n$             & $8n+8j-27>2$  \\
with $j=5,7$      &                   & $(j=7;n=2^{t-3}-5)$             &            &                     &                          \\
\hline
$(3,6,7)$         & $43+8n$           & none                            & $\times$   & $50+8n$             & $8n+37>2$  \\
\hline
$(5,6,7)$         & $45+8n$           & none                            & \checkmark & $50+8n$             & $8n+41>2$    \\
\hline
$(1,2,3,4)$       & $49+16n$          & none                            & \checkmark & $106+16n$           & $16n-7>2$   \\
\hline
$(1,2,3,6)$       & $65+16n$          & none                            & \checkmark & $106+16n$           & $16n+25>2$   \\
\hline
$(1,2,5,6)$       & $73+16n$          & none                            & \checkmark & $106+16n$           & $16n+41>2$   \\
\hline
$(3,4,5,6)$       & $79+16n$          & $n=2^{t-4}-5$                   & $\times$   & $106+16n$           & $16n+53>2$   \\
\hline
$(1,2,3,4,5)$     & $129+32n$         & none                            & \checkmark & $218+32n$           & $32n+41>2$   \\
\hline
$(1,2,3,4,7)$     & $161+32n$         & none                            & \checkmark & $218+32n$           & $32n+105>2$  \\
\hline
$(3,4,5,6,7)$     & $191+32n$         & $n=2^{t-5}-6$                   & $\times$   & $218+32n$           & $32n+165>2$  \\
\hline
$(1,2,3,4,5,6)$   & $301+64n$         & none                            & \checkmark & $442+64n$           & $64n+161>2$  \\
\hline
$(1,2,3,4,5,6,7)$ & $769+128n$        & none                            & \checkmark & $890+128n$          & $128n+549>2$ \\
\hline
\end{tabular}
\end{center}

Since we are interested in isolating degenerate cases from non-degenerate ones, corresponding to $d\neq 2^t-1$ and $d=2^t-1$ respectively, we consider the equation $d=2^t-1$. For instance, in the first row, corresponding to $J=(1,j)$ with $j=2,4,6$ with $d=1+2j+4n$, as $n>0$, for $d+1=2^t$ the possible value for $t$ is bounded below by $3$. We have
$$d+1=2^t\Rightarrow 2+2j+4n=2^t\Rightarrow (1+j)+2n=2^{t-1}\Rightarrow j\textrm{ is odd}.$$
which is a contradiction as possible values for $j$ are $2,4,6$. Hence, $d+1\neq 2^t$ for all $t$. Similar simple computations, using congruency observations, allows to determine the degenerate cases. The non-degenerate cases are obtained by the possible solution to the equation $d+1=2^t$.

\begin{lmm}\label{eliminate-1}
Consider sequences $J$ in the above table so that $2d+1-\mathrm{top}>2$, $d+1\neq 2^t$ for all $t$, and $d+1\equiv 2\textrm{ mod 4}$. Then, there is no $f\in{_2\pi_{2d}\Omega^8S^{n+8}}$ with $h(f)=(Q_Jx_n)^2$.
\end{lmm}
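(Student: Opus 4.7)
The plan is to combine Theorem \ref{detect1} with Lemma \ref{main1}. Suppose for contradiction that $f \in {_2\pi_{2d}}\Omega^{8}S^{n+8}$ exists with $h(f) = (Q_J x_n)^{2}$ for one of the sequences $J$ singled out by the three stated constraints. Because the table carries a single sequence $J$ in each row, $l_{0} := \max\{l(J)\} = l(J)$.

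First, I would apply Theorem \ref{detect1} with $l = 8$ to form the composite
\[
g \;:=\; \Omega j_{2^{l_{0}}}^{7} \circ f \colon S^{2d} \longrightarrow \Omega^{8}S^{n+8} \longrightarrow \Omega Q D_{2^{l_{0}}}(S^{n+1},7) \;=\; Q\bigl(\Sigma^{-1}D_{2^{l_{0}}}(S^{n+1},7)\bigr),
\]
with Hurewicz image
\[
h(g) \;=\; \bigl(\Sigma^{-1}\overline{Q_J x_{n+1}}\bigr)^{2},
\]
a nonzero square class. The identification of $\Omega Q D_{2^{l_{0}}}(S^{n+1},7)$ with $Q\Sigma^{-1}D_{2^{l_{0}}}(S^{n+1},7)$ relies on the desuspension produced by Lemma \ref{desuspend}.

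Second, I would invoke Lemma \ref{main1} on $g$ with $X := \Sigma^{-1}D_{2^{l_{0}}}(S^{n+1},7)$. The space $X$ is path connected by Lemma \ref{desuspend}. The arithmetic hypotheses $d$ odd (which holds because $Q_J x_n$ is odd-dimensional by \cite[Theorem 2.4]{Zare-Els-1}), $d+1 \neq 2^{t}$, and $d+1 \equiv 2\textrm{ mod }4$ are exactly the constraints built into Lemma \ref{eliminate-1}. What remains is the cohomological vanishing $H^{2d-1}X = 0 = H^{2d}X$. The table's inequality $2d+1-\mathrm{top}>2$ rearranges to $\mathrm{top} \leqslant 2d-2$, so $H^{j}D_{2^{l_{0}}}(S^{n+1},7) = 0$ for every $j \geqslant 2d-1$. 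The suspension shift $H^{i}X \cong H^{i+1}D_{2^{l_{0}}}(S^{n+1},7)$ then yields $H^{2d-1}X = H^{2d}X = 0$.

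With all hypotheses of Lemma \ref{main1} verified, the existence of $g$ with $h(g)$ equal to a nonzero square class is ruled out, contradicting the identity produced in the first step. The argument then concludes. I expect no genuine obstacle beyond the routine bookkeeping needed to see that the cohomological vanishing issuing from the condition $2d+1-\mathrm{top}>2$ (which is formulated in terms of $D_{2^{l_{0}}}(S^{n+1},7)$) matches the hypothesis of Lemma \ref{main1} after passing to the one-fold desuspension $X$.
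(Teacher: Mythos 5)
Your argument is correct and coincides with the paper's own proof: both apply Theorem \ref{detect1} to the composite $\Omega j_{2^{l_0}}^{7}\circ f$, observe that the inequality $2d+1-\mathrm{top}>2$ forces $H^{2d-1}\Sigma^{-1}D_{2^{l_0}}(S^{n+1},7)\simeq 0\simeq H^{2d}\Sigma^{-1}D_{2^{l_0}}(S^{n+1},7)$, and then invoke Lemma \ref{main1} to reach a contradiction. Your explicit bookkeeping of the suspension shift $H^{i}X\cong H^{i+1}D_{2^{l_0}}(S^{n+1},7)$ is a welcome clarification the paper leaves implicit.
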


\begin{proof}
Suppose $J$ is chosen as in the lemma, and there is $f\in{_2\pi_{2d}\Omega^8S^{n+8}}$ with $h(f)=(Q_jx_n)^2$. According to Theorem \ref{detect1}, the mapping
$$(\Omega j_{2^{l_0}}^7)\circ f:S^{2d}\to Q\Sigma^{-1}D_{2^{l_0}}(S^{n+1},7)$$
satisfies $h((\Omega j_{2^{l_0}}^7)\circ f)=(\Sigma^{-1}\overline{Q_Jx_{n+1}})^2$ with $\Sigma^{-1}\overline{Q_Jx_{n+1}}\in H_*\Sigma^{-1}D_{2^{l_0}}(S^{n+1},7)$ and the stable adjoint of $(\Omega j_{2^{l_0}}^7)\circ f$, say $g:S^{2d+1}\to D_{2^{l_0}}(S^{n+1},7)$, is detected by $Sq^{d+1}$ on a $(d+1)$-dimensional class. However, the inequality $2d+1-\mathrm{top}>2$ shows that $H^{2d-1}\Sigma^{-1}D_{2^{l_0}}(S^{n+1},7)\simeq 0$ as well as $H^{2d}\Sigma^{-1}D_{2^{l_0}}(S^{n+1},7)\simeq 0$. Hence, by Lemma \ref{main1}, it is impossible to have $h((\Omega j_{2^{l_0}}^7)\circ f)$ as claimed. This is a contradiction to the existence of such an $f$. This complete the proof.
\end{proof}

The degenerate cases that are not eliminated by Lemma \ref{eliminate-1} are
$$\begin{array}{lcc}
J=(1,j)   &\textrm{ with }&(j=4;n=1),(j=2;n=2,1)\\
J=(1,2,j) &\textrm{ with }&(j=3;n=2,1)
\end{array}$$
with the corresponding $Q_Jx_n$, written in upper indexed operations, respectively equal to
$$\begin{array}{l}
Q^7Q^5x_1,\ Q^7Q^4x_2,\ Q^5Q^3x_1,\\
Q^{17}Q^{9}Q^5x_2,\ Q^{13}Q^7Q^4x_1.
\end{array}$$
Apart from two of the above classes, for the rest of the classes we show that $Q_Jx_n$ is not $A$-annihilated which together with the Cartan formula $Sq^{2t}_*\xi^2=(Sq^t_*\xi)^2$ shows that is not $(Q_Jx_n)^2$ $A$-annihilated, hence not spherical. Applying Nishida relations we have
$$\begin{array}{rcl}
Sq^4_*Q^7Q^5x_1          &=&Q^{5}Q^3x_1+\textrm{other terms}\neq 0,\\
Sq^2_*Q^7Q^4x_2          &=&Q^6Q^3x_2+\textrm{other terms}\neq 0,\\
Sq^4_*Q^{13}Q^7Q^4x_1    &=&Q^{11}Q^6Q^3x_1+\textrm{other terms}\neq 0.
\end{array}$$
It remains to eliminate $(Q_1Q_2x_n)^2=(Q^5Q^3x_1)^2$ from being spherical in $H_*\Omega^8S^{9}$ as well as $(Q^{17}Q^{9}Q^5x_2)^2\in H_*\Omega^8S^{8+2}$.

We have the following which is a more general result.
\begin{lmm}\label{unst-stable-1}
The Hurewicz homomorphism ${_2\pi_{18}}\Omega^lS^{l+1}\to H_{18}\Omega^lS^{l+1}$ is acts trivially.
\end{lmm}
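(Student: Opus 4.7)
My plan proceeds in four steps.

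\emph{Step 1.} Since the stabilisation $\Omega^lS^{l+1}\to QS^1$ is a multiplicative monomorphism in mod $2$ homology, every spherical class in $H_{18}\Omega^lS^{l+1}$ maps to a spherical class in $H_{18}QS^1$. Hence it suffices to treat the limiting case, showing that the Hurewicz map $\pi_{18}QS^1\to H_{18}QS^1$ is trivial.

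\emph{Step 2.} Given $f\in{_2\pi_{18}}QS^1$ with $h(f)\neq 0$, the stable suspension $\sigma_*^\infty h(f)\in\widetilde H_{18}S^1$ vanishes by dimension. Theorem \ref{TheReductionTheorem}(i), applied with $l=+\infty$ and $n=1$, then produces an integer $j\geqslant 0$ and an adjoint $f^j:S^{18+j}\to QS^{1+j}$ with
\[
h(f^j)=\zeta^2,\qquad\dim\zeta=9+j/2,
\]
for an odd-dimensional primitive $\zeta$; parity forces $j\equiv 0\pmod 4$. As $h(f^j)$ is $A$-annihilated and $Sq^{2t}_*\zeta^2=(Sq^t_*\zeta)^2$, the class $\zeta$ is $A$-annihilated as well.

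\emph{Step 3.} I would enumerate the primitive $A$-annihilated classes of dimension $9+j/2$ in $H_*QS^{1+j}$, using the polynomial description together with Nishida relations, and eliminate each case. For $j=0$ the primitives in dimension $9$ are $Q^8x_1$ and $Q^5Q^3x_1$; the first fails $A$-annihilation since $Sq^1_*Q^8x_1=Q^7x_1\neq 0$, leaving $\zeta=Q^5Q^3x_1$ as the only candidate. For $j=4$ only $Q^6x_5$ is primitive in dimension $11$ and $Sq^1_*Q^6x_5=x_5^2\neq 0$ excludes it. For $j\in\{8,12\}$ and for $j\geqslant 20$ a direct dimension count shows no primitive of dimension $9+j/2$ exists. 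For $j=16$ only $\zeta=x_{17}$ is possible, yielding $h(f^{16})=x_{17}^2$; Lemma \ref{l=0} then forces $17\in\{1,3,7\}$, a contradiction with Adams' Hopf-invariant-one theorem.

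\emph{Step 4.} The only remaining possibility is $h(f)=(Q^5Q^3x_1)^2$. Apply Theorem \ref{detect1} to a finite lift of $f$ with $n=1$, $l_0=2$, $d=9$: the composite $\Omega j_4^{l-1}\circ f$ has Hurewicz image $\bigl(\Sigma^{-1}\overline{(Q^3x_2)^2}\bigr)^2$, and its stable adjoint $g:S^{19}\not\to D_4(S^2,l-1)$ is detected by $Sq^{10}$ on $\bigl((Q^3x_2)^2\bigr)^*\in H^{10}D_4(S^2,l-1)$ in the stable mapping cone $C_g$. Since $d+1=10=4\cdot 2+2$, I would invoke the Adem relation $Sq^{10}=Sq^2Sq^8+Sq^1Sq^8Sq^1$ and, using Nishida on the explicit basis of $H_*D_4(S^2,l-1)$ (length-two admissible monomials, squares of length-one generators, and products of two length-one generators), trace that the resulting operation lies entirely in the image of $H^{20}D_4(S^2,l-1)\hookrightarrow H^{20}C_g$ with no component along the attached top cell, contradicting the detection. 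The principal obstacle is precisely this last calculation: unlike Lemma \ref{main1}, both $H^{17}D_4(S^2,l-1)$ and $H^{18}D_4(S^2,l-1)$ are nonzero, so the intermediate terms $Sq^8\bigl((Q^3x_2)^2\bigr)^*$ and $Sq^8Sq^1\bigl((Q^3x_2)^2\bigr)^*$ need not vanish a priori, and one must explicitly track them through the Nishida relations to verify the vanishing of the top-cell component.
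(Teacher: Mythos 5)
There is a genuine gap at Step 4, and it sits exactly where the whole difficulty of this lemma lies. Your Steps 1--3 are sound and do correctly funnel everything down to the single candidate $h(f)=(Q^5Q^3x_1)^2$ in $H_{18}QS^1$ (the cases $j=4,8,12$ die by dimension or by $Sq^1_*$, and $j=16$ dies by Hopf invariant one). But for that last class the cohomological machinery of the paper genuinely does not close: with $d=9$, $n=1$, $l_0=2$ the inequality of Theorem \ref{degenerate1} reads $2\leqslant l-1-l/4$, which is \emph{satisfied} for all $l\geqslant 4$, so the top cell of $D_4(S^2,l-1)$ is not out of range and Lemma \ref{main1} cannot be applied. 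You acknowledge this and propose to push the Adem relation $Sq^{10}=Sq^2Sq^8+Sq^1Sq^8Sq^1$ through $H^*D_4(S^2,l-1)$ by hand, but you do not carry out the computation, and you give no reason to expect the top-cell component to vanish. As it stands, "one must explicitly track them \dots to verify the vanishing" is a statement of the remaining problem, not a proof. Indeed the paper itself flags this: the author writes that a homological elimination of these residual square classes would be "of special interest," i.e.\ is not known, which is precisely why this lemma is proved by a different mechanism.

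The paper's actual argument avoids the calculation entirely: after reducing to $QS^1$ as you do, it adjoints \emph{down} to $g\in{_2\pi_{17}}QS^0\simeq{_2\pi_{17}^s}$ (if $h(f)\neq 0$ then $h(g)\neq 0$ since $\sigma_*h(g)=h(f)$), invokes the known generators $\eta\eta^*$ and $\nu\kappa$ of the $2$-component of the $17$-stem from Ravenel's tables, and uses the author's earlier result that a product of positive-dimensional stable homotopy classes in distinct stems has trivial Hurewicz image in $H_*QS^0$. That input from stable homotopy theory is what replaces your unfinished Step 4. To repair your proof you would either have to actually complete the $Sq^{10}$ computation in $H^*C_g$ (with no guarantee it succeeds), or import the $\pi_{17}^s$ argument as the paper does.
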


\begin{proof}
Suppose there $l>0$ and $f\in{_2\pi_{18}}\Omega^lS^{l+1}$ with $h(f)\neq 0$. Since the inclusion $E^\infty:\Omega^lS^{l+1}\to QS^1$ induces a monomorphism in homology, then $h(E^\infty f)\neq 0$. Hence, for the adjoint mapping $g:S^{17}\to QS^0$ is also nonzero in homology. On the other hand, recall that ${_2\pi_{17}}QS^0\simeq\Z/2\{\eta\eta^*,\nu\kappa\}$ \cite[Appendix 3]{Ravenel-Greenbook}. But, by results of \cite{Za-ideal}
(see also \cite[Theorem 1.8]{Zare-PEMS}) we have
$$h(\eta\eta^*)=0,\ h(\nu\kappa)=0$$
as $\eta$ and $\eta^*$ as well as $\nu$ and $\kappa$ are not in the same dimension. This contradicts the fact that $h(g)\neq 0$. Hence, we cannot have any $f\in\pi_{18}\Omega^lS^{l+1}$ which maps nontrivially under $h:\pi_{18}\Omega^lS^{l+1}\to H_{18}\Omega^8S^9$, and completes the proof.
\end{proof}

For $l=8$, the above lemma provides us with the required elimination. The reasoning behind Lemma \ref{unst-stable-1} proves a more general fact.

\begin{prp}\label{unst-stable-2}
Suppose $h:{_2\pi_k^s}S^n\to\simeq{_2\pi_k}QS^n\to H_kQS^n$ is trivial. Then, $h:{_2\pi_k}\Omega^lS^{l+n}\to H_k\Omega^lS^{l+n}$ is trivial for all $l>0$. In particular, if $h:{_2\pi_{k-n}^s}\to\simeq{_2\pi_{k-n}}QS^0\to H_kQS^0$ is trivial, then $h:{_2\pi_{k-n+i}}\Omega^{l+n-i}lS^{l+n}\to H_{k-n+i}\Omega^{l+n-i}S^{l+n}$ is trivially for all $l>0$ and $i>0$.
\end{prp}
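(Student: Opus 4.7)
The plan is to reduce both assertions to the assumed triviality of stable Hurewicz via two essentially formal ingredients: (a) the stabilisation map $E^\infty:\Omega^lS^{l+n}\to QS^n$ induces a monomorphism in mod $2$ homology, a classical consequence of the Dyer--Lashof descriptions recalled in Subsection \ref{homologysection} and already invoked in Corollary \ref{degenerate2} and the proof of Lemma \ref{unst-stable-1}; and (b) the natural equivalence $QS^0\simeq\Omega^iQS^i$ obtained by iterating $QS^m\simeq\Omega QS^{m+1}$, under which the unstable Hurewicz homomorphism on the left corresponds to the iterated homology suspension $\sigma_*^i$ applied to the Hurewicz image on the right.

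For the first assertion I would mimic exactly the argument used in Lemma \ref{unst-stable-1}. Given $f\in{_2\pi_k}\Omega^lS^{l+n}$ with $h(f)\neq 0$, the composite $E^\infty\circ f$ is an element of ${_2\pi_k}QS^n\simeq{_2\pi_k^s}S^n$ whose Hurewicz image is nonzero by (a), contradicting the standing hypothesis.

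For the ``in particular'' clause I would proceed in two steps. First apply the first assertion with the relabelling $k':=k-n+i$, $n':=i$, $l':=l+n-i$, so that $\Omega^{l'}S^{l'+n'}=\Omega^{l+n-i}S^{l+n}$: this reduces the problem to showing that triviality of Hurewicz on ${_2\pi_{k-n}}QS^0$ forces triviality of Hurewicz on ${_2\pi_{k-n+i}}QS^i$. To see the latter, suppose $f_1:S^{k-n+i}\to QS^i$ satisfies $h(f_1)\neq 0$, and let $g:S^{k-n}\to QS^0$ be its $i$-fold adjoint under (b). Naturality of the Hurewicz map under this adjunction gives $h(f_1)=\sigma_*^i h(g)$, so $h(g)\neq 0$, contradicting the hypothesis on ${_2\pi_{k-n}}QS^0$.

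There is no substantive obstacle here; both ingredients (a) and (b) are standard, and the argument is really just a formal amplification of the one given for Lemma \ref{unst-stable-1}. The only point requiring care is the bookkeeping in the adjunction, in particular the tacit assumption $l+n-i>0$ so that $\Omega^{l+n-i}S^{l+n}$ is a bona fide finite loop space; the boundary case $l+n-i=0$ degenerates to a statement about $\pi_*$ of a sphere and is handled by the same adjoint argument applied directly.
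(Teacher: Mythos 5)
Your proposal is correct and follows essentially the same route as the paper: the paper gives no separate proof of this proposition, stating only that ``the reasoning behind Lemma \ref{unst-stable-1} proves a more general fact,'' and that reasoning is precisely your two ingredients --- the stabilisation $\Omega^lS^{l+n}\to QS^n$ being a monomorphism in homology, followed by adjointing down through $\Omega^iQS^i\simeq QS^0$ and using $h(f_1)=\sigma_*^ih(g)$. Your explicit bookkeeping for the ``in particular'' clause is a welcome elaboration of what the paper leaves implicit, but it is not a different argument.
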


We can offer various ways to eliminate $(Q^{17}Q^9Q^5x_2)^2\in H_{66}\Omega^8S^{10}$ from being spherical which we sketch below.\\
First, note that if this class is spherical, i.e there exists $f\in{_2\pi_{66}}\Omega^8S^{10}$ with $h(f)=(Q^{17}Q^9Q^5x_2)^2$, as the stabilisation $E^\infty:\Omega^8S^{10}\to QS^2$ induces a monomorphism in homology, then $E^\infty f\in{_2\pi_{66}}QS^2$ also satisfies $h(E^\infty f)=(Q^{17}Q^9Q^5x_2)^2\in H_{66}QS^2$. By adjointing down $E^\infty f$ twice, we obtain an element, say $g\in{_2\pi_{64}}QS^0\simeq{_2\pi_{64}^s}$ with $h(g)\neq 0$ in $H_{64}QS^0$.\\
(1) Appeal to the table of Wellington to show that there is no spherical class in $H_{64}QS^0$ which gives us a contradiction to the existence of $f$. Hence, proving our claim.\\
(2) Appeal to a detailed analysis of ${_2\pi_{64}^s}$ as computed in \cite{KochmanMahowald} to show that $h:{_2\pi_{64}}QS^0\to H_{64}QS^0$ acts trivially, hence eliminating $(Q^{17}Q^9Q^5x_2)^2$ from being spherical in $H_{66}\Omega^8S^{10}$.\\
We leave the rest of the elimination process to the interested reader. Of course there is a kind of uncertainty here if one is to use the second approach suggested above. After work of Isaksen \cite{Isaksen-stablestems} as well as recent computation of Wang and Xu \cite{WangXu-61stem}, providing a proof of ${_2\pi_{61}^s}\simeq0$, one may wonder that whether the computation of Kochmann and Mahowald on ${_2\pi_{64}^s}$ still holds or the application of new methods could result in some corrections to their computations. Therefore, finding a homological way of eliminating $(Q^{17}Q^9Q^5x_2)^2\in H_{66}\Omega^8S^{10}$ from being spherical would be of special interest.

\begin{rmk}
(i) Although, we have applied the lemma in the above situation. However, we wish to use information about $h:{_2\pi_{k-n+i}}\Omega^{l+n-i}S^{l+n}\to H_{k-n+i}\Omega^{l+n-i}S^{l+n}$ for all $i>0$ and $l>0$ to obtain some information on $h:{_2\pi_{k-n+i}}QS^i\to H_{k-n+i}QS^i$. So, the proposition, is in fact in the reverse direction of this paper!\\
(ii) We wished to have more geometric tools of eliminating the last two classes from being spherical. However, using the available information on ${_2\pi_{18}^2}$ and ${_2\pi_{64}^s}$ seemed to provide more economic ways of proving our claims.
\end{rmk}

\subsection{Degenerate cases for $l(J)>1$; $d+1\equiv 0\textrm{ mod }4$}
We begin by collecting the degenerate cases, i.e. $d+1\neq 2^t$ for all $t$, with $d+1\equiv0\mathrm{mod}4$ from the previous table. We have the following cases.
\begin{center}
\begin{tabular}{|c|c|c|c|c|c|c|}
\hline
$J$               &  $Q^Ix_n=Q_Jx$                                    &  $\textrm{cases with } d+1\neq 2^t$ \\
\hline
$(3,j)$           &  $Q^{3+2j+2n}Q^{j+n}x_n$                          &  $(j=6;n\neq 2^{t-2}-4)$            \\
with $j=4,6$      &                                                   &  $(j=4;\textrm{all }n)$             \\
\hline
$(3,4,j)$         & $Q^{7+2j+4n}Q^{4+j+2n}Q^{j+n}x_n$                 &  $(j=5;n\neq 2^{t-3}-4)$            \\
with $j=5,7$      &                                                   &  $(j=7;n\neq 2^{t-3}-5)$            \\
\hline
$(3,6,7)$         & $Q^{23+4n}Q^{13+2n}Q^{7+n}x_n$                    &  all $n$                            \\
\hline
$(3,4,5,6)$       & $Q^{41+8n}Q^{21+4n}Q^{11+2n}Q^{6+n}x_n$           & $n\neq 2^{t-4}-5$                  \\
\hline
$(3,4,5,6,7)$     & $Q^{97+16n}Q^{49+8n}Q^{25+4n}Q^{13+2n}Q^{7+n}x_n$ & $n\neq 2^{t-5}-6$                 \\
\hline
\end{tabular}
\end{center}
We begin with the straightforward cases.
\begin{lmm}
For $J$ being one of the sequences $(3,4,5,6)$ and $(3,4,5,6,7)$ as well as $(3,4,j)$ with $j=5,7$, the class $Q_Jx_n$ is not $A$-annihilated. Consequently, $(Q_Jx_n)^2$ is not spherical.
\end{lmm}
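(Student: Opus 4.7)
My plan is to show $Sq^2_* Q_J x_n \neq 0$ for each of the four sequences; the Cartan formula $Sq^{2t}_*\xi^2 = (Sq^t_*\xi)^2$ then gives $Sq^4_*(Q_Jx_n)^2 = (Sq^2_*Q_Jx_n)^2 \neq 0$, so $(Q_Jx_n)^2$ fails to be $A$-annihilated and is therefore not spherical, as claimed.

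I will write $Q_J x_n = Q^{i_1}Q^{i_2}\cdots Q^{i_s}x_n$ in admissible upper-indexed form and apply the Nishida relation
$$Sq^2_* Q^{i_1}\xi = \binom{i_1-2}{2}\, Q^{i_1-2}\xi + Q^{i_1-1}\, Sq^1_*\xi.$$
The structural observation driving the argument is that, by the conditions on $J$ from \cite[Theorem 2.4]{Zare-Els-1}, consecutive sums $j_k + j_{k+1}$ are odd, which together with $i_k \equiv j_k + j_{k+1}\pmod 2$ (immediate from $i_k = j_k + |Q_{j_{k+1}}\cdots Q_{j_s}x_n|$ and $|Q_{j_{k+1}}\cdots x_n|\equiv j_{k+1}\pmod 2$) forces $i_1$ and $i_2$ to be odd in all four cases; concretely, $i_1 \in \{17+4n,\ 21+4n,\ 41+8n,\ 97+16n\}$ and $i_2 \in \{9+2n,\ 11+2n,\ 21+4n,\ 49+8n\}$. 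The second term of the Nishida expansion then dies on $\xi = Q^{i_2}\cdots Q^{i_s}x_n$, since $Sq^1_* Q^{i_2} = (i_2-1)\, Q^{i_2-1} = 0$. For the first coefficient, writing $i_1 = 2k+1$ gives $\binom{2k-1}{2} = (2k-1)(k-1) \equiv k-1 \pmod 2$, and $k-1$ is odd in each of the four cases, making the coefficient equal to $1$ in $\Z/2$. Thus
$$Sq^2_* Q_J x_n = Q^{i_1-2}Q^{i_2}\cdots Q^{i_s}x_n = Q_{J'} x_n,\qquad J' = (1, j_2, \ldots, j_s).$$

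To finish I will check that $Q_{J'} x_n$ is a nonzero polynomial generator of $H_*\Omega^8 S^{n+8}$: the lower-indexed sequence $J'$ is strictly increasing (since $1 < 4 \leq j_2$), every entry lies in $\{1,\ldots,7\}$, and $j_1' = 1 > 0$ supplies the required excess condition. Notably the whole argument is uniform in $n$; no parity case split is needed. The only delicate point I anticipate is verifying that $\binom{i_1-2}{2}$ is odd in each of the four concrete instances, but this is a mechanical check via the factorisation above and the oddness of the relevant factors.
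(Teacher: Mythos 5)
Your proposal is correct and follows essentially the same route as the paper's own proof: apply $Sq^2_*$ via the Nishida relation, use $i_1\equiv 1\bmod 4$ to see that $\binom{i_1-2}{2}\equiv 1\bmod 2$, and conclude that $Sq^2_*Q_Jx_n=Q^{i_1-2}Q^{i_2}\cdots Q^{i_s}x_n\neq 0$. Your write-up is in fact slightly more complete, since you explicitly note that the $Q^{i_1-1}Sq^1_*$ term dies because $i_2$ is odd and that the resulting lower-indexed sequence $(1,j_2,\ldots,j_s)$ is a bona fide polynomial generator, points the paper leaves implicit.
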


\begin{proof}
By writing in upper indexed operations we have the classes
$$Q^{7+2j+4n}Q^{4+j+2n}Q^{j+n}x_n,\ Q^{41+8n}Q^{21+4n}Q^{11+2n}Q^{6+n}x_n,\ Q^{97+16n}Q^{49+8n}Q^{25+4n}Q^{13+2n}Q^{7+n}x_n$$
to eliminate. We note that
$$7+2j+4n\equiv1\mathrm{mod}4\textrm{ for }j=5,7,\ 41+8n\equiv1\mathrm{mod}4,\ 97+16n\equiv1\mathrm{mod}4$$
which shows that
$${7+2j+4n-2\choose 2}\equiv1\mathrm{mod}2,\ {41+8n-2\choose 2}\equiv1\mathrm{mod}2,\ {97+16n-2\choose 2}\equiv1\mathrm{mod}2.$$
Hence, using Nishida relations we have
$$\begin{array}{rclcl}
Sq^2_*Q^{7+2j+4n}Q^{4+j+2n}Q^{j+n}x_n                 &=& Q^{5+2j+4n}Q^{4+j+2n}Q^{j+n}x_n                 &\neq&0,\\
Sq^2_*Q^{41+8n}Q^{21+4n}Q^{11+2n}Q^{6+n}x_n           &=& Q^{39+8n}Q^{21+4n}Q^{11+2n}Q^{6+n}x_n           &\neq&0,\\
Sq^2_*Q^{97+16n}Q^{49+8n}Q^{25+4n}Q^{13+2n}Q^{7+n}x_n &=& Q^{95+16n}Q^{49+8n}Q^{25+4n}Q^{13+2n}Q^{7+n}x_n &\neq&0.
\end{array}$$
This completes the proof.
\end{proof}

Now, we deal with the two remaining cases separately.
\begin{lmm}
For $J=(3,6,7)$ the class $Q_Jx_n=Q^{23+4n}Q^{13+2n}Q^{7+n}x_n$ is not $A$-annihilated. Consequently, $(Q_Jx_n)^2$ is not spherical.
\end{lmm}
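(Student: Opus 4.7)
The plan is to exhibit a single Steenrod co-operation $Sq^r_*$ that acts nontrivially on $Q_J x_n = Q^{23+4n}Q^{13+2n}Q^{7+n}x_n$; then by the Cartan formula $Sq^{2r}_*(Q_J x_n)^2 = (Sq^r_* Q_J x_n)^2$, the square is not $A$-annihilated, and since spherical classes must be $A$-annihilated, $(Q_J x_n)^2$ cannot be spherical. The natural first guess, $Sq^2_*$, fails: expanding through the outermost operation via Nishida yields $\binom{21+4n}{2}Q^{21+4n}Q^{13+2n}Q^{7+n}x_n + Q^{22+4n}Sq^1_* Q^{13+2n}Q^{7+n}x_n$, and a short Lucas-theorem check shows $\binom{21+4n}{2}\equiv 0\pmod 2$ (since bit $1$ of $21+4n$ is $0$), while the second summand vanishes because $Sq^1_* Q^{13+2n} = (12+2n) Q^{12+2n}$ has an even coefficient. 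I therefore turn to $Sq^4_*$.

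Expanding $Sq^4_* Q^{23+4n}$ via Nishida yields three terms
\[
\binom{19+4n}{4} Q^{19+4n} + \binom{19+4n}{2} Q^{20+4n} Sq^1_* + Q^{21+4n} Sq^2_* .
\]
By Lucas' theorem $\binom{19+4n}{2}\equiv 1$ and $\binom{19+4n}{4}\equiv n\pmod 2$, which forces a parity split on $n$. The middle term always vanishes, since we have already observed $Sq^1_* Q^{13+2n}Q^{7+n}x_n = 0$. The remaining work reduces to computing $Sq^2_*$ on $Q^{13+2n}Q^{7+n}x_n$, which by another application of Nishida (using $Sq^1_* Q^{7+n}x_n = n Q^{6+n} x_n$ modulo $2$) gives $(1-n)Q^{11+2n}Q^{7+n}x_n + n Q^{12+2n}Q^{6+n}x_n$, where $n$ is read mod $2$.

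Assembling the two cases: when $n$ is even the only surviving summand of $Sq^4_* Q_J x_n$ is $Q^{21+4n}Q^{11+2n}Q^{7+n}x_n$; when $n$ is odd it is
\[
Q^{19+4n}Q^{13+2n}Q^{7+n}x_n + Q^{21+4n}Q^{12+2n}Q^{6+n}x_n,
\]
and the first of these summands is zero by excess, since $Q^{13+2n}Q^{7+n}x_n$ has dimension $20+4n > 19+4n$. In each case the surviving monomial is easily seen to be admissible (the inequalities $21+4n\leq 22+4n$, $21+4n\leq 24+4n$, $11+2n\leq 14+2n$, and $12+2n\leq 12+2n$ are immediate) and of excess $3+n > n$, so it is a nonzero generator of $H_*\Omega^l S^{n+l}$. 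Hence $Sq^4_* Q_J x_n \neq 0$ for every $n>0$.

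The main bookkeeping hazard is to track the parity of $n$ through the Lucas calculations and to remember the vanishing rule $Q^i\zeta = 0$ for $i < \dim\zeta$ when discarding the phantom term in the odd-$n$ case; everything else is mechanical. Once $Sq^4_* Q_J x_n \neq 0$ is established, the Cartan formula gives $Sq^8_*(Q_J x_n)^2 = (Sq^4_* Q_J x_n)^2 \neq 0$, so $(Q_J x_n)^2$ is not $A$-annihilated and therefore not spherical.
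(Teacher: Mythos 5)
Your proposal is correct and follows essentially the same route as the paper: apply $Sq^4_*$ via the Nishida relations, split on the parity of $n$, and observe that the surviving admissible term is $Q^{21+4n}Q^{11+2n}Q^{7+n}x_n$ for $n$ even and $Q^{21+4n}Q^{12+2n}Q^{6+n}x_n$ for $n$ odd, then conclude via the Cartan formula $Sq^{8}_*\xi^2=(Sq^4_*\xi)^2$. Your version merely makes explicit the binomial-coefficient checks and the vanishing of the remaining terms that the paper summarizes as ``other terms of strictly lower excess.''
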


\begin{proof}
If $n$ is odd then $7+n$ is even. In this case, by Nishida relations, we have
$$Sq^4_*Q^{23+4n}Q^{13+2n}Q^{7+n}x_n=Q^{21+4n}Q^{12+2n}Q^{6+n}x_n+\textrm{other terms}$$
where one can compute that other terms are of strictly lower excess. Consequently, if $n$ is odd then $Sq^4_*Q^{23+4n}Q^{13+2n}Q^{7+n}x_n\neq 0$. Hence, for $n$ even, the class $(Q_Jx_n)^2$ is not also $A$-annihilated which means that this class is not also spherical.\\
If $n$ is even, then $13+2n\equiv1\mathrm{mod}4$. In this case, by Nishida relations, we have
$$Sq^4_*Q^{23+4n}Q^{13+2n}Q^{7+n}x_n=Q^{21+4n}Q^{11+2n}Q^{7+n}x_n+\textrm{other terms}\neq 0.$$
This shows that $Q_Jx_n=Q^{23+4n}Q^{13+2n}Q^{7+n}x_n$ is not $A$-annihilated, hence $(Q_Jx_n)^2$ is not. This completes the proof.
\end{proof}

It remains to eliminate the case of $J=(3,j)$ with $Q_Jx_n=Q^{3+j+2n}Q^{j+n}x_n$ where for $j=6$ we require $n\neq 2^{t-2}-4)$, and all possible values for $n$ are allowed if $j=4$. We have the following.
\begin{lmm}
(i) The class $Q^{7+2n}Q^{4+n}x_n$, corresponding to $J=(3,4)$, is not $A$-annihilated. Consequently, $(Q_Jx_n)^2$ is not spherical.\\
(ii) The class $Q^{9+2n}Q^{6+n}x_n$, corresponding to $J=(3,6)$, is not $A$-annihilated. Consequently, $(Q_Jx_n)^2$ is not spherical.
\end{lmm}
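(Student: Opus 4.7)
The plan is to show that each class $Q_Jx_n$ is not $A$-annihilated by exhibiting a specific $r$ with $Sq^r_*(Q_Jx_n)\neq 0$, and then invoke the Cartan formula $Sq^{2t}_*\xi^2=(Sq^t_*\xi)^2$: since squaring is injective on the polynomial generators of $H_*\Omega^lS^{n+l}$, any surviving $Sq^t_*Q_Jx_n\neq 0$ forces $Sq^{2t}_*(Q_Jx_n)^2\neq 0$, so the square is not $A$-annihilated and therefore cannot be spherical.

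For part (i), I would apply $Sq^2_*$ and use the Nishida relation
\[
Sq^2_*Q^{7+2n}=\binom{5+2n}{2}Q^{5+2n}+Q^{6+2n}Sq^1_*,
\]
splitting into cases by the parity of $n$. When $n$ is odd, $5+2n\equiv 3\pmod 4$ forces $\binom{5+2n}{2}\equiv 1\pmod 2$, while $Sq^1_*Q^{4+n}x_n=0$ (since $4+n$ is odd); the surviving term $Q^{5+2n}Q^{4+n}x_n$ is admissible with $\ex=1+n>n$, hence nonzero. When $n$ is even the binomial vanishes modulo $2$, but $Sq^1_*Q^{4+n}x_n=Q^{3+n}x_n$ delivers the surviving term $Q^{6+2n}Q^{3+n}x_n$, again admissible with $\ex=3+n>n$.

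For part (ii) the argument is parallel but subtler. When $n$ is even the same $Sq^2_*$-computation gives the nonzero expression $Q^{7+2n}Q^{6+n}x_n+Q^{8+2n}Q^{5+n}x_n$, a sum of two distinct admissible monomials each of positive excess. When $n$ is odd, $\binom{7+2n}{2}\equiv 0\pmod 2$ and $Sq^1_*Q^{6+n}x_n=0$, so $Sq^2_*$ vanishes identically and one must pass to $Sq^4_*$; expanding
\[
Sq^4_*Q^{9+2n}=\binom{5+2n}{4}Q^{5+2n}+\binom{5+2n}{2}Q^{6+2n}Sq^1_*+Q^{7+2n}Sq^2_*,
\]
in the subcase $n\equiv 3\pmod 4$ the composition $Q^{7+2n}Sq^2_*(Q^{6+n}x_n)=Q^{7+2n}Q^{4+n}x_n$ survives as an admissible monomial of excess $3+n>n$.

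The main obstacle is the residue class $n\equiv 1\pmod 4$ of (ii), where both $Sq^2_*$ and $Sq^4_*$ evaluate to zero on the nose (the would-be surviving term $Q^{5+2n}Q^{6+n}x_n$ vanishes because $5+2n<\dim Q^{6+n}x_n$). In this subcase I anticipate either pushing to a higher $Sq^r_*$ whose Nishida expansion, tracked via Lucas's theorem, delivers a new admissible monomial of excess exceeding $n$, or, in analogy with the stable-stem appeal used earlier for $(Q^5Q^3x_1)^2\in H_{18}\Omega^8S^9$, invoking direct information on the relevant homotopy group ${_2\pi_{30+8n}}\Omega^lS^{n+l}$ to rule out sphericity homotopy-theoretically. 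Once these residue-class subcases are closed, the Cartan-formula step above completes the elimination of both $(Q_Jx_n)^2$ from being spherical.
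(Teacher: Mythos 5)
Your treatment of part (i), and of part (ii) for $n$ even and $n\equiv 3\pmod 4$, is correct and is essentially the paper's own argument: apply a Nishida relation to the leading $Q^a$, evaluate the binomial coefficients by residue class, check that the surviving monomial is admissible of excess greater than $n$, and transfer non-annihilation to the square via the Cartan formula $Sq^{2t}_*\xi^2=(Sq^t_*\xi)^2$. (The paper disposes of even $n$ by the general fact that every entry of an $A$-annihilated admissible monomial must be odd, but your direct $Sq^2_*$ computation is equivalent.)

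The obstacle you flag in part (ii) for $n\equiv 1\pmod 4$ is genuine, and the paper does not resolve it: its proof of (ii) consists of the single sentence ``use $Sq^4_*$ and look at the value of $6+n$ modulo $4$,'' which, exactly as your computation shows, yields zero in that residue class --- the leading term $Q^{5+2n}Q^{6+n}x_n$ dies because $5+2n<\dim Q^{6+n}x_n$, the middle term dies because $6+n$ is odd, and $Q^{7+2n}Sq^2_*Q^{6+n}x_n=\binom{4+n}{2}Q^{7+2n}Q^{4+n}x_n=0$ since $4+n\equiv 1\pmod 4$. Moreover, your first proposed repair (pushing to higher $Sq^r_*$) cannot succeed in general: for $n=1$ the class is $Q^{11}Q^7x_1$, and since $Sq^t_*Q^7x_1=\binom{7-t}{t}Q^{7-t}x_1=0$ for all $t>0$, the Nishida relation collapses to $Sq^r_*(Q^{11}Q^7x_1)=\binom{11-r}{r}Q^{11-r}Q^7x_1$, which vanishes for every $r>0$ (the coefficients $\binom{10}{1},\binom{9}{2},\binom{8}{3}$ are even, and $Q^{11-r}Q^7x_1=0$ for $r\geqslant 3$ for excess reasons). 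So $Q^{11}Q^7x_1$ is in fact $A$-annihilated, the assertion ``not $A$-annihilated'' in the lemma is itself false for this case, and only your second route --- a geometric or homotopy-theoretic elimination, in the spirit of the appeal to ${_2\pi_{17}^s}$ used for $(Q^5Q^3x_1)^2$ --- could salvage the conclusion that $(Q^{9+2n}Q^{6+n}x_n)^2$ is not spherical when $n\equiv 1\pmod 4$. You should regard this not as a deficiency of your write-up relative to the paper, but as a gap in the paper that you have correctly isolated.
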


\begin{proof}
Note that in both cases, similar to the previous lemma, we need $4+n$ and $6+n$ to be odd which implies $n$ cannot be even; if $n$ is even, we may use $Sq^4_*$ to show that $(Q_Jx_n)^2$ is not $A$-annihilated. Hence, we only deal with the case of $n$ being odd.\\
(i) We divide the proof into two cases: $n\equiv1\mathrm{mod}4$ and $n\equiv3\mathrm{mod}4$.\\
\textbf{Case of} $n\equiv1\mathrm{mod}4$. In this case $7+2n\equiv1\mathrm{mod}4$. By Nishida relations we have
$$Sq^2_*Q^{7+2n}Q^{4+n}x_n=Q^{5+2n}Q^{4+n}x_n\neq 0$$
which shows that $Q^{7+2n}Q^{4+n}x_n$ is not $A$-annihilated, and consequently $(Q^{7+2n}Q^{4+n}x_n)^2$ is not. This completes the proof in this case.\\
\textbf{Case of} $n\equiv3\mathrm{mod}4$. Similarly, $7+2n\equiv1\mathrm{mod}4$. A computation similar to the above case, completes the proof.\\
(ii) The proof is similar to (i) where the only difference is that, here one is to use $Sq^4_*$ and look at the value of $6+n$ modulo $4$. We leave the rest of the computations to the reader.
\end{proof}

\subsection{The non-degenerate cases for $l(J)>1$}
Here, we deal with the non-degenerate cases corresponding to $d=2^t-1$, which are collected in the following table. The second column consists of the classes $Q_Jx_n$, corresponding to a given $J$, when written in upper indexed operations. We show that the following classes cannot give rise to spherical classes $(Q_Jx_n)^2$. Unlike the degenerate cases, where we mostly used geometric tools, here we use algebraic tools. We shows that none of the following classes is $A$-annihilated. Consequently, by the Cartan formula $Sq^{2t}_*\xi^2=(Sq^t_*\xi)^2$, the classes $(Q_Jx_n)^2$ are not $A$-annihilated.
\begin{center}
\begin{tabular}{|c|l|c|c|c|c|}
\hline
$J$                                  &  $Q^Ix_n$                                     & $\textrm{cases with } d+1=2^t$ \\
\hline
$(3,j)$                              &  $Q^{3+2j+2n}Q^{j+n}x_n$                    &  $(j=6,n=2^{t-2}-4)$\\
\hline
$(3,4,j)$                            & $Q^{7+2j+4n}Q^{4+j+2n}Q^{j+n}x_n$           & $(j=5,n=2^{t-3}-4)$\\
with $j=5,7$                         &                                             & $(j=7,n=2^{t-3}-5)$\\
\hline
$(3,4,5,6)$                          & $Q^{41+8n}Q^{21+4n}Q^{11+2n}Q^{6+n}x_n$      & $n=2^{t-4}-5$\\
\hline
$(3,4,5,6,7)$                        & $Q^{97+16n}Q^{49+8n}Q^{25+4n}Q^{13+2n}Q^{7+n}x_n$    & $n=2^{t-5}-6$\\
\hline
\end{tabular}
\end{center}
The cases for with $j+n\equiv2\mathrm{mod}2$ in the above table are immediately eliminated from being spherical. To see this, first note that if $(Q_Jx_n)^2\in H_*\Omega^lS^{n+l}$ is spherical, as the stabilisation $\Omega^lS^{n+l}\to QS^n$ is a multiplicative monomorphism, then $(Q_Jx_n)^2\in H_*QS^n$ is spherical. Now, by computations of Wellington \cite[Theorem 5.3]{Wellington-thesis} it is impossible for $j+n$ to be even. This is equivalent to proving (by induction)that if $I$ consists of an even entry then it is not $A$-annihilated where the induction starts as if $i_1$ is even then $Sq^1_*Q^Ix_n=Q^{i_1-1}Q^{i_2}\cdots Q^{i_s}x_n\neq 0$, and assuming $i_1,\ldots,i_{k-1}$ are odd, one can use $Sq^{2^{k-1}}_*$ to show that $i_k$ also must be odd. Consequently, the classes with $j+n$ being an even number are immediately eliminated. Hence, we have the following remaining cases.
\begin{center}
\begin{tabular}{|c|l|c|c|c|c|}
\hline
$J$                                  &  $Q^Ix_n$                                     & $\textrm{cases with } d+1=2^t$ \\
\hline
$(3,4,j)$                            & $Q^{7+2j+4n}Q^{4+j+2n}Q^{j+n}x_n$           & $(j=5,n=2^{t-3}-4)$\\
\hline
$(3,4,5,6)$                          & $Q^{41+8n}Q^{21+4n}Q^{11+2n}Q^{6+n}x_n$      & $n=2^{t-4}-5$\\
\hline
$(3,4,5,6,7)$                        & $Q^{97+16n}Q^{49+8n}Q^{25+4n}Q^{13+2n}Q^{7+n}x_n$    & $n=2^{t-5}-6$\\
\hline
\end{tabular}
\end{center}
Here, all of these remaining classes fall into the same pattern. By putting the values for $n$ and $j$, we have the following classes to eliminate where we only have computing $i_1$ which is enough for our purpose:
$$\begin{array}{|c|c|}
\hline
Q^Ix_n & \textrm{corresponding to }\\
\hline
Q^{2^{t-2}+1}Q^{4+j+2n}Q^{j+n}x_n & (j=5,n=2^{t-3}-4)\\
\hline
Q^{2^{t-1}+1}Q^{21+4n}Q^{11+2n}Q^{6+n}x_n      &  n=2^{t-4}-5\\
\hline
Q^{2^{t-1}+1}Q^{49+8n}Q^{25+4n}Q^{13+2n}Q^{7+n}x_n    &  n=2^{t-5}-6\\
\hline
\end{array}$$
It is easy to compute by Nishida relations that
$$\begin{array}{lclcc}
Sq^2_*Q^{2^{t-2}+1}Q^{4+j+2n}Q^{j+n}x_n                     &=&  Q^{2^{t-2}-1}Q^{4+j+2n}Q^{j+n}x_n                  &\neq& 0,\\
Sq^2_*Q^{2^{t-1}+1}Q^{21+4n}Q^{11+2n}Q^{6+n}x_n             &=&  Q^{2^{t-1}-1}Q^{21+4n}Q^{11+2n}Q^{6+n}x_n          &\neq& 0,\\
Sq^2_*Q^{2^{t-1}+1}Q^{49+8n}Q^{25+4n}Q^{13+2n}Q^{7+n}x_n    &=&  Q^{2^{t-1}-1}Q^{49+8n}Q^{25+4n}Q^{13+2n}Q^{7+n}x_n &\neq& 0.\\
\end{array}$$
Note that, in the above relations, the result is stated in admissible sequences, so the non-vanishing of the resulting classes is immediate. Hence, these cases are also eliminated from being spherical.

\section{Discussions}
The problem of $p_{2^t-1}^2$ being spherical, implies that there are certain spherical classes in $H_{2^{t+1}-2}QP$ which involve $a_{2^t-1}^2$ as noted in \cite{Eccles-codimension}. Either through the Kahn-Priddy map $QP\to QS^0$ or through certain second James-Hopf map $Q_0S^0\to QP$ one immediately sees that this is related to the existence of certain codimension one immersions. On the other hand, existence of spherical classes $(\sum Q_Jx_n)^2$ in $H_*\Omega^lS^{n+l}$ results in spherical classes in $H_*QS^n$ and relates the problem to the existence of certain codimension $n$ immersions with certain nontrivial structures on their $2^t$-fold point manifolds \cite{KoschorkeSanderson}. The method of Asadi and Eccles \cite{AsadiEccles-determining} together with our observations implies that there exists no such immersion coming from $\Omega^lS^{n+l}$ when $n\geqslant l$ and the dimension is $2^{t+1}-2$ which seems to be a generalised version of Browder's result. The implications of these results to the (unstable) bordism groups of immersions is considered in \cite{Zare-filteredfiniteness}. We leave further detailed investigation on this to a future work.\\

\textbf{Acknowledgment.} I am grateful to Drew Heard for the communication on MathOverFlow regarding ${_2\pi_{64}^s}$ and the current state of knowledge about stable stems in these dimensions who drew my attention to \cite{Isaksen}.

\bibliographystyle{plain}


\end{document}